\documentclass[11pt,A4paper]{article}
\usepackage{amsmath,amsthm,amssymb,amsfonts}
\usepackage[numbers,sort&compress]{natbib}
\usepackage{enumerate}
\usepackage{color,colordvi}
\usepackage{soul}
\usepackage{fullpage}

\usepackage{changes}

\usepackage[letterpaper,top=2cm,bottom=2cm,left=3cm,right=3cm,marginparwidth=2cm]{geometry}

\usepackage{amsmath,amsfonts,amsthm,mathtools,bm}
\usepackage{graphicx}
\usepackage[colorlinks=true, allcolors=blue]{hyperref}
\usepackage[capitalise]{cleveref}
\usepackage{standalone}
\usepackage{algorithm} 
\usepackage{indentfirst}
\usepackage{enumerate}
\usepackage{authblk}
\usepackage{diagbox}
\usepackage{changes}
\usepackage{algpseudocode}
\usepackage{bbm}
\usepackage{enumerate}

\algrenewcommand\algorithmicrequire{\textbf{Input:}}
\algrenewcommand\algorithmicensure{\textbf{Output:}}

\usepackage[mathscr]{euscript}
\usetikzlibrary{calc}

\usetikzlibrary{decorations.pathreplacing} 

\theoremstyle{plain}
\newtheorem{lemma}{Lemma}[section]
\newtheorem{theorem}[lemma]{Theorem}
\newtheorem{proposition}[lemma]{Proposition}
\newtheorem{corollary}[lemma]{Corollary}

\theoremstyle{definition}

\newtheorem{case}{Case}
\newtheorem{subcase}{Subcase}[case]

\newtheorem{claim}{Claim}

\newtheorem{conjecture}[lemma]{Conjecture}
\newtheorem{definition}[lemma]{Definition}

\newtheorem{example}[lemma]{Example}

\newtheorem{problem}[lemma]{Problem}

\newtheorem{remark}[lemma]{Remark}

\newtheorem*{claim*}{Claim}

\numberwithin{equation}{section}

\newcommand{\textif}{\text{if }}
\newcommand{\textotherwise}{\text{otherwise}}

\newcommand{\RR}{\mathbb{R}}

\newcommand{\RomanNum}[1]{\uppercase\expandafter{\romannumeral #1}}

\newcounter{mycase}
\newcommand{\mycase}[2]{%
  \refstepcounter{mycase}%
  \par\noindent\textbf{Case~\themycase.}\label{#1} #2%
}

\newcounter{mysubcase}[mycase]
\renewcommand{\themysubcase}{\themycase.\arabic{mysubcase}}
\newcommand{\mysubcase}[2]{%
  \refstepcounter{mysubcase}%
  \par\noindent\textbf{Subcase~\themysubcase.}\label{#1} #2%
}

\usepackage{tikz}

\tikzset{
    int/.style={circle,fill=black,draw=black,inner sep=1.7pt},
    leaf/.style={circle,fill=white,draw=black,inner sep=1.7pt},
    dots/.style={inner sep=0pt}
}

\usetikzlibrary{arrows.meta,fit,backgrounds,decorations.pathreplacing}

\DeclarePairedDelimiter{\set}{\lbrace}{\rbrace}

\title{Krahn--Szeg\H{o} type inequalities and nodal domain methods on graphs}

\author{Huiqiu Lin\footnote{email: huiqiulin@126.com}}
\author{Lianping Liu\footnote{email: y30231283@mail.ecust.edu.cn}}
\author{Xilong Yin\footnote{email: xilongyin@126.com}}
\author{Zhe You\footnote{email: y30231280@mail.ecust.edu.cn}}

\affil{School of Mathematics, East China University of Science and Technology, 130 Meilong Road, Shanghai 200237, China.}

\date{}

\begin{document}
\maketitle

\begin{abstract}

We study discrete analogues of classical spectral geometric inequalities and extremal eigenvalue problems on graphs.
The classical Krahn--Szeg\H{o} inequality states that, among bounded open subsets of $\mathbb{R}^n$ with fixed volume, the minimum of $\lambda_2(\Omega)$ is attained by the union of two congruent balls.
Firstly, we establish a Krahn--Szeg\H{o} type inequality for trees. 
For trees with a fixed number of interior vertices and boundary leaves, we completely characterize the extremal structures that minimize the second Dirichlet eigenvalue. 
Secondly, we develop a nodal domain method for  adjacency matrices. 
By proving an adjacency version of the nodal domain theorem for graphs, we obtain upper bounds for the second largest adjacency eigenvalue $\rho_2(G)$ of $G$ in given graph classes.
These bounds imply some previous results.
Finally, we settle the Aouchiche--Hansen conjecture (2010) on the second largest eigenvalue with  given number of edges and clique number. 
We prove that for connected graphs $G$ of odd order $n \geq 5$, $|\rho_2| \cdot \omega \leq m-2$, with equality if and only if $G$ consists of two complete graphs of orders $\frac{n+1}{2}$ and $\frac{n-1}{2}$ joined by an edge or a path. For even $n \geq 2$, the quantity $|\rho_2| \cdot \omega - m$ is maximized exactly when $G$ is obtained by adding one edge between the two copies of $K_{n/2}$ by an edge.

The core of the methods developed in this paper is to regard a connected graph as an internally disconnected graph with Dirichlet boundary condition. This perspective allows us to transfer nodal domain techniques from continuous spectral geometry to discrete settings and to obtain sharp extremal characterizations across diverse graph classes.


\end{abstract}



\tableofcontents
\section{Introduction}
The Dirichlet eigenvalue problem is one of the classical eigenvalue problems in spectral geometry.
Let $\Omega$ be a bounded open subset in $\mathbb{R}^d$ with a smooth boundary $\partial\Omega$.
The Dirichlet eigenvalue problem is 
$$\Delta f+\lambda f=0\text{ in $\Omega$}$$
with Dirichlet boundary condition 
$$f|_{\partial\Omega}=0.$$
Equivalently, this is the eigenvalue problem for the Dirichlet Laplacian $\Delta_\Omega$, which is defined by $\Delta_\Omega f \coloneqq (\Delta \widehat{f})|_\Omega$, where $\widehat{f}$ is an extension of $f$ to $\mathbb{R}^d$ by assigning 0 to the region outside $\Omega$.
The eigenvalues of the Dirichlet Laplacian can be ordered as 
\begin{align*}
    0 <\lambda_1(\Omega) \leq \lambda_2(\Omega)  \leq \cdots \leq\lambda_k(\Omega)\leq \cdots.
\end{align*}

In 1961, P\'olya~\cite{polya1961eigenvalues} proved that $$\lambda_k(\Omega)\geq C_d(\frac{k}{V(\Omega)})^\frac{2}{d}$$ 
for tiling domains in the plane, for all $k\geq 1$, where $V(\Omega)$ and $C_d=(2\pi)^2V_d^{-\frac{2}{d}}$ respectively denote the volume of $\Omega$ and the Weyl constant. 
$V_d$ is the volume of the unit ball in $\mathbb{R}^d$.
The same proof extends to tiling domains in $\mathbb{R}^d$.
Moreover, he conjectured that this lower bound is true for all bounded domains in $\mathbb{R}^d$.
This conjecture is still open.
 For general domains $\Omega$, Polya’s conjecture holds for the first two Dirichlet eigenvalues, which are respectively known as Faber-Krahn inequality  and  Krahn-Szeg\H{o} inequality~\cite{Henrot}.
Until recently, Pólya's conjecture had remained one of the core problems in spectral geometry.
Related results can be found in~\cite{Berezin, Li-Yau, kovarik2009Polya-CMP, Freitas2019Polya, Filonov2023Polya-Invent, He-Wang2024Polya, Freitas2025Polya, Guo2025Polya, Jiang-Lin2025Polya, Filonov2026Polya-JLMS, Frank2026Polya-CPAM, Lin2026Polya-SCM} and their references.

In discrete settings, one can also study a discrete version of P\'olya's conjecture.
The Faber--Krahn inequality states that, among bounded domains in $\mathbb{R}^n$ with fixed volume, the ball minimizes the first Dirichlet eigenvalue.
 Friedman~\cite{friedman1993some} first introduced the concept of ``a graph with boundary", and he further conjectured that a Faber--Krahn type inequality should hold for regular trees with given volume. 
However, this conjecture was shown to be false, see~\cite{leydold1996faber, pruss1998discrete}. 
Subsequently, Leydold~\cite{leydold2002geometry} established a Faber--Krahn inequality for regular trees and gave a complete characterization of the extremal trees, which are ball approximations.
B{\i}y{\i}ko{\u{g}}lu and Leydold~\cite{biyikouglu2007faber} also show that trees that have lowest first Dirichlet eigenvalue for a given degree sequence are ball approximations.
Moreover, they proposed the following problem.
\begin{problem}[{\cite[Problem 1]{biyikouglu2007faber}}]\label{pro:Faber-Krahn}
    Give a characterization of all graphs in a given class $\mathcal{C}$ with the $\mathit{Faber}$–$\mathit{Krahn}$ property, i.e., characterize those graphs in $\mathcal{C}$ which have minimal first Dirichlet eigenvalue for a given “volume”.
\end{problem}

Further results on Faber--Krahn inequalities for graphs can be found in~\cite{Rose, Quantitative, supertree, You, He-Yu1, He-Yu2, glued graphs}.
For higher Dirichlet eigenvalues, Bauer and Lippner~\cite{bauer2022eigenvalue} considered a discrete version of Pólya's conjecture based on Li--Yau's method~\cite{Li-Yau} for finite induced subgraphs of the $n$-dimensional integer lattice $\mathbb{Z}^n$, which can be regarded as a discrete form of $\mathbb{R}^n$. 
Furthermore, Hua and Li~\cite{MR4956579} extended Bauer and Lippner's result to poly-Laplace operators.

In this paper, we first establish a Krahn--Szeg\H{o} type inequality for trees.
Motivated by the problem of B{\i}y{\i}ko{\u{g}}lu and Leydold, we propose the following analogous problem.
\begin{problem}
Give a characterization of all graphs in a given class $\mathcal{C}$ with the $\mathit{Krahn}$–$\textit{Szeg\"o}$ property, i.e., characterize those graphs in $\mathcal{C}$ which have minimal second Dirichlet eigenvalue for a given “volume”.
\end{problem}
Our first result is a Krahn-Szeg\H{o} type inequality for trees with a prescribed number of interior vertices and boundary leaves.

For integers $s\geq 1$ and $q\geq 1$, let $C_{s,q}$ be the tree constructed as follows. 
Start with an internal path $v_1v_2\cdots v_s$.
Attach $q-1$ leaves to $v_1$ and one leaf to $v_s$. 
For integers $s\geq 1$ and $q\geq 2$, and for $w\geq1$, let $C_{s,q}^{w}$ be
a weighted tree obtained from $C_{s,q}$ by assigning  weight $w$ to one of
$q-1$ boundary edges incident to $v_1$, while all other edges have weight $1$.

\begin{theorem}\label{cor:tree_fixed_interior_leaves_lambda2}
Let $\mathcal T_{k,b}$ be the class of trees.
Then the minimum of $\lambda_2(T)$ among $T\in\mathcal T_{k,b}$ is
attained as follows.
\begin{enumerate}[(1)]
\item If $k=2t+1$, then equality holds if and only if $T$ is obtained from a path $P_{2t+3}=v_1\sim v_2\sim \cdots \sim v_{2t+3}$ by attaching the remaining
$b-2$ leaves to the  vertex $v_{t+2}$.
See~\cref{fig:tree-k-2t-plus-1}.
\item If $k=2t$ and $b=2r$, then equality holds if and only if $T$ is obtained from two copies of $C_{t,r}$ by joining $v_1$ in one $C_{t,r}$ and $v_1'$ in the other $C_{t,r}$. 
See~\cref{fig:tree-k-2t-b-2r}.
\item If $k=2t$ and $b=2r+1$, then equality holds if and only if $T$ is obtained from one copy of $C_{t,r}$ and one copy of
$C_{t,r+1}$ by joining $v_1$ in $C_{t,r}$ and $v_1'$ in  $C_{t,r+1}$. 
See~\cref{fig:tree-k-2t-b-2r}.
\end{enumerate}
\end{theorem}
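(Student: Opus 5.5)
The strategy is the discrete analogue of the continuous Krahn--Szeg\H{o} argument: split a second Dirichlet eigenfunction into its two nodal domains, apply a Faber--Krahn type bound on each piece, and then optimize the split. Here $T\in\mathcal T_{k,b}$ has $k$ interior vertices and $b$ boundary leaves, with Dirichlet condition on the leaves.

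\textbf{Step 1 (nodal domains).} Let $f$ be an eigenfunction for $\lambda_2(T)$. By a discrete nodal domain theorem for trees with Dirichlet boundary, $f$ has exactly two strong nodal domains. Possibly $f$ vanishes at some interior vertices; in that case I perturb or use a weak nodal argument. Cutting $T$ at the sign change splits the interior vertices into two subtrees $T_1$ and $T_2$ with $k_1+k_2\le k$. In each $T_i$, the vertices adjacent to the other domain or to zero vertices become new Dirichlet boundary. If an edge $uv$ has $f(u)>0>f(v)$, it becomes in each side a weighted boundary edge. This is exactly where the weighted trees $C^{w}_{s,q}$ with $w\ge1$ enter, since the effective pendant weight is $1-f(v)/f(u)$ scaled appropriately. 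Then
\[
\lambda_2(T)=\lambda_1(T_1)=\lambda_1(T_2)\ \ge\ \max\{\lambda_1(T_1),\lambda_1(T_2)\},
\]
where each $T_i$ is a tree with $k_i$ interior vertices, some boundary leaves, and possibly one weighted boundary edge.

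\textbf{Step 2 (Faber--Krahn for each piece).} For trees with $s$ interior vertices and $q$ boundary leaves, possibly with one weighted boundary edge of weight $w\ge1$, I show that $\lambda_1$ is minimized by $C_{s,q}$, respectively $C^w_{s,q}$. The argument uses the standard rearrangement and edge-switching technique of Leydold and B{\i}y{\i}ko{\u{g}}lu--Leydold, applied to a positive first eigenfunction. Moving a leaf onto a vertex of larger eigenfunction value does not increase the Rayleigh quotient. Among these trees, $\lambda_1$ is monotone: it decreases in $s$ and increases in $q$, because adding boundary leaves adds Dirichlet terms.

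\textbf{Step 3 (balancing).} The maximum of the two first eigenvalues is smallest when the interior vertices split as evenly as possible, $k_1=\lceil k/2\rceil$ and $k_2=\lfloor k/2\rfloor$, and when the leaves are shared as evenly as possible. For $k=2t+1$, the middle vertex $v_{t+2}$ can carry the zero of $f$. It then absorbs the remaining $b-2$ leaves at no cost, which explains the path with the leaves attached at the center. For $k=2t$, the sign change is on the middle edge $v_1v_1'$. The leaves split as $r,r$ or $r,r+1$, giving two $C_{t,\cdot}$ pieces.

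\textbf{Step 4 (equality).} Trace equality through each inequality. Switching lemmas are strict unless the configuration is already extremal, and the eigenfunction of the extremal tree has the claimed symmetric structure. This forces $T$ to be the stated tree.

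The main obstacle is Step 1 combined with Step 3 in the even case. There the cut edge carries a sign change and produces genuinely weighted subproblems, and I must compare $\lambda_1(C^{w}_{t,r})$ against configurations with an unbalanced leaf split. I would first try to show directly that the gluing through one edge reproduces $\lambda_2$ exactly, via the symmetric or antisymmetric eigenvector on the tree. After that, monotonicity in $w$ and $q$ would handle the comparisons.
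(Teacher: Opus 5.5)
Your architecture is the same as the paper's: cut at nodal domains, apply the weighted Faber--Krahn theorem (comets $C_{s,q}$, $C^{w}_{s,q}$) to each piece, then balance. But the balancing step for $k=2t$ has a genuine gap: monotonicity in $w$ and $q$ cannot give the sharp bound.

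If you treat the two induced weights $w_1,w_2>1$ at the cut edge $uv$ as independent, monotonicity only yields $\lambda_2(T)\ge\max\{\sigma_t(p_1+w_1),\sigma_t(p_2+w_2)\}>\sigma_t(\max\{p_1,p_2\}+1)$. That is $\sigma_t(r+1)$ for $b=2r$ and $\sigma_t(r+2)$ for $b=2r+1$. The true minima are $\sigma_t(r+2)$ and $\sigma_t\bigl(r+1+\tfrac{1+\sqrt5}{2}\bigr)$, so this bound cannot certify minimality.

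The missing idea is that the weights are coupled. The zero $z_{uv}$ splits the unit-length edge into segments of lengths $\frac{f(u)}{f(u)-f(v)}$ and $\frac{-f(v)}{f(u)-f(v)}$, so $\frac1{w_1}+\frac1{w_2}=1$. You wrote $w=1-f(v)/f(u)$ for one side but never used what it forces on the other side. With this constraint the problem is to minimize $\max\{p_1+w_1,p_2+w_2\}$:
\begin{itemize}
\item For $b=2r$, Cauchy--Schwarz gives $w_1+w_2\ge 4$, hence $\max\ge r+2$, with equality only for $p_1=p_2=r$ and $w_1=w_2=2$.
\item For $b=2r+1$, the case $|p_1-p_2|\ge 3$ is excluded because $w_2>1$. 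For $(p_1,p_2)=(r,r+1)$, balancing $r+w_1=r+1+w_2$ gives $w_2=\frac{1+\sqrt5}{2}$.
\end{itemize}
Computing the candidate tree's $\lambda_2$ from a symmetric or antisymmetric eigenvector, as you suggest, only gives the upper bound. For $b=2r+1$ the two halves are not isomorphic anyway. The eigenvector there is $g$ on the $C_{t,r}$ side and $-\frac{1+\sqrt5}{2}\,g$ on the $C_{t,r+1}$ side, where $g$ is the Perron vector of $T_t\bigl(r+1+\frac{1+\sqrt5}{2}\bigr)$.

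Three further steps need repair.
\begin{itemize}
\item A $\lambda_2$-eigenfunction on a tree need not have exactly two nodal domains. On a spider with three equal legs, $\lambda_2=\lambda_3$, and some eigenfunction vanishes at the centre and has three nodal domains. ``Perturbing'' is not a valid remedy, and you do not need it. Every nodal domain $U$ satisfies $\lambda_1(G_U)=\lambda_2(T)$, so it suffices to pick one with at most $\lfloor k/2\rfloor$ interior vertices.
\item Your claim that the interior vertices must split evenly needs a cross-size comparison. In particular, for $k=2t$ you must rule out an interior zero vertex that absorbs leaves. This requires $\sigma_{s+1}(a)<\sigma_s(2)$ for \emph{every} $a>0$, which follows by testing $T_{s+1}(a)$ with $(0,\mathbf y)$. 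Separate monotonicity in $s$ and in $q$ cannot compare, say, $\sigma_{t-1}(2)$ with $\sigma_t(r+2)$.
\item In the odd case, your split $(\lceil k/2\rceil,\lfloor k/2\rfloor)$ is inconsistent with the zero sitting at $v_{t+2}$. The correct argument: some domain has at most $t$ interior vertices and at least two boundary vertices, so $\lambda_2\ge\sigma_t(2)$. Equality forces all boundary weights of that domain to equal $1$, so the zero lies at a vertex rather than inside an edge.
\end{itemize}
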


\begin{figure}[htbp]
\centering
\begin{tikzpicture}[
    x=0.8cm,y=0.8cm,
    int/.style={circle,fill=black,inner sep=1.8pt},
    leaf/.style={circle,draw=black,fill=white,inner sep=1.8pt},
    every node/.style={font=\small}
]

\node[leaf] (a1)  at (-7.2,0) {};
\node[int]  (a2)  at (-5.4,0) {};
\node[int]  (at1) at (-1.6,0) {};
\node[int]  (ac)  at (0,0) {};
\node[int]  (at3) at (1.6,0) {};
\node[int]  (a22) at (5.4,0) {};
\node[leaf] (a23) at (7.2,0) {};

\node at (-3.5,0) {$\cdots$};
\node at (3.5,0) {$\cdots$};

\draw (a1)--(a2);
\draw (a2)--(-4.0,0);
\draw (-3.0,0)--(at1);
\draw (at1)--(ac);
\draw (ac)--(at3);
\draw (at3)--(3.0,0);
\draw (4.0,0)--(a22);
\draw (a22)--(a23);

\node[below=3pt] at (a1)  {$v_1$};
\node[below=3pt] at (a2)  {$v_2$};
\node[below=3pt] at (at1) {$v_{t+1}$};
\node[below=3pt] at (ac)  {$v_{t+2}$};
\node[below=3pt] at (at3) {$v_{t+3}$};
\node[below=3pt] at (a22) {$v_{2t+2}$};
\node[below=3pt] at (a23) {$v_{2t+3}$};

\node[leaf] (al1) at (-0.8,1.15) {};
\node[leaf] (al2) at (0.8,1.15) {};
\node at (0,1.15) {$\cdots$};

\draw (ac)--(al1);
\draw (ac)--(al2);

\node[above left=1pt]  at (al1) {$v_{t+2,1}$};
\node[above right=1pt] at (al2) {$v_{t+2,b-2}$};

\end{tikzpicture}
\caption{$k=2t+1$}
\label{fig:tree-k-2t-plus-1}
\end{figure}

\begin{figure}[htbp]
\centering
\begin{tikzpicture}[
    x=0.8cm,y=0.8cm,
    int/.style={circle,fill=black,inner sep=1.8pt},
    leaf/.style={circle,draw=black,fill=white,inner sep=1.8pt},
    every node/.style={font=\small}
]

\node[leaf] (bl)  at (-7.2,0) {};
\node[int]  (bt)  at (-5.4,0) {};
\node[int]  (b2)  at (-2.5,0) {};
\node[int]  (b1)  at (-0.9,0) {};
\node[int]  (b1p) at (0.9,0) {};
\node[int]  (b2p) at (2.5,0) {};
\node[int]  (btp) at (5.4,0) {};
\node[leaf] (br)  at (7.2,0) {};

\node at (-3.95,0) {$\cdots$};
\node at ( 3.95,0) {$\cdots$};

\draw (bl)--(bt);
\draw (bt)--(-4.45,0);
\draw (-3.45,0)--(b2)--(b1)--(b1p)--(b2p);
\draw (b2p)--(3.45,0);
\draw (4.45,0)--(btp)--(br);

\node[below=3pt] at (bt)  {$v_t$};
\node[below=3pt] at (bl)  {$v_{t,1}$};
\node[below=3pt] at (b2)  {$v_2$};
\node[below=3pt] at (b1)  {$v_1$};

\node[below=3pt] at (b1p) {$v_1'$};
\node[below=3pt] at (b2p) {$v_2'$};
\node[below=3pt] at (btp) {$v_t'$};
\node[below=3pt] at (br)  {$v_{t,1}'$};

\node[leaf] (b11) at (-1.95,1.30) {};
\node[leaf] (b12) at (-0.30,1.30) {};
\node at (-1.12,1.3) {$\cdots$};

\draw (b1)--(b11);
\draw (b1)--(b12);

\node[above left=1pt, xshift=-2pt, yshift=0pt] at (b11) {$u_1$};
\node[above right=1pt, xshift=6pt, yshift=1pt] at (b12) {$u_1'$};

\node[leaf] (bp11) at (0.30,1.30) {};
\node[leaf] (bp12) at (1.95,1.30) {};
\node at (1.12,1.3) {$\cdots$};

\draw (b1p)--(bp11);
\draw (b1p)--(bp12);

\node[above left=1pt, xshift=-6pt, yshift=1pt] at (bp11) {$u_{r-1}$};
\node[above right=1pt, xshift=0pt, yshift=-2pt] at (bp12) {$u_s'$};

\end{tikzpicture}
\caption{For $b=2r$, $s=r-1$. For $b=2r+1$, $s=r$.}
\label{fig:tree-k-2t-b-2r}
\end{figure}

Our proof is based on  nodal domain  and geometric representation of graphs~\cite{friedman1993some}.
By geometric representation, we can always cut the  geometric representation of graph along certain vertices where the eigenfunction has zero entry, making it (internally) disconnected. 
This operation does not change the second Dirichlet eigenvalue of the graph, and therefore every connected graph can be viewed as a disconnected one.
Our proof is a little bit different from the proof in Euclidean case since discrete nodal domain theorem~\cite{nodal_domain_LAA} is different from nodal domain theorem in $\mathbb{R}^n$.
This idea may also apply to general discrete Schr\"odinger operators, including the negative of the adjacency matrix $A$.
Thus, by a similar method, we also obtain some general results for the second largest adjacency eigenvalues of graphs in some particular classes.
The key point is to construct a Dirichlet Schr\"odinger type adjacency operator.

For each positive integer $s$, let
$\alpha(s) \coloneqq \max\{\rho_1(H): H\in\mathcal C,\ |V(H)|=s\}$, where $\rho_1(H)$ is the adjacency spectral radius of $H$.
Let $G\cup H$ denote the disjoint union of two graphs $G$ and $H$. 
Let $G_1 \bullet K_1 \bullet G_2$ be the set of connected graphs obtained from disjoint union of $G_1 \cup G_2$ by adding some edges from their vertices to an additional vertex $K_1$, and let $\{G_{1}\cup G_{2}\}+e$ be the graph obtained from the disjoint union $G_1\cup G_2$ by joining an edge $e = uv$.
\begin{theorem}\label{lem:adjacency_nodal_domain}
Let $\mathcal C$ be a graph class. 
Assume that $\alpha(s)$ is strictly increasing in $s$.
Let $G \in \mathcal C$ of order $n$. 
Then
\begin{align*}
        \rho_2(G)\leq \alpha(\lfloor n/2\rfloor).
\end{align*}
Equality holds if and only if one of the following alternatives holds.
\begin{enumerate}[(1)]
\item If $n$ is even, then equality holds if and only if the graph $G$ is of the form $G_1^*(n/2)\cup G_2^*(n/2)$, where $G_1^*(n/2), G_2^*(n/2)\in \mathcal C, \rho_1(G_1^*(n/2))=\rho_1(G_2^*(n/2))=\alpha(n/2)$;

\item If $n$ is odd, then equality holds if and only if the graph $G$ is of the form $G_1^*(\lfloor n/2\rfloor)\cup G_2(\lceil n/2\rceil)$ or $G_1^*(\lfloor n/2\rfloor) \bullet K_1 \bullet G_2^*(\lfloor n/2\rfloor)$, where $G_1^*(n/2), G_2^*(n/2), G_2(\lceil n/2\rceil) \in \mathcal C,$\\ 
$\rho_1(G_1^*(\lfloor n/2\rfloor))=\rho_1(G_2^*(\lfloor n/2\rfloor))=\alpha(\lfloor n/2\rfloor)$, and
$\rho_1(G_2(\lceil n/2\rceil))\geq \rho_1(G_1^*(\lfloor n/2\rfloor)).$

In particular, equality holds when the graph $G$ is of the form $2G_1^*(\lfloor n/2\rfloor)\cup K_1$ and $ G_1^*(\lfloor n/2\rfloor)\cup G_1^*(\lceil n/2\rceil).$
\end{enumerate}
Moreover, if $G$ is connected, then equality holds if and only if $n$ is odd and the graph $G$ is of the form $G_1^*(\lfloor n/2\rfloor) \bullet K_1 \bullet G_2^*(\lfloor n/2\rfloor)$, where $G_1^*(\lfloor n/2\rfloor), G_2^*(\lfloor n/2\rfloor)\in \mathcal C, G_1^*(\lfloor n/2\rfloor) \bullet K_1 \bullet G_2^*(\lfloor n/2\rfloor)\in\mathcal C, \rho_1(G_1^*(\lfloor n/2\rfloor))=\rho_1(G_2^*(\lfloor n/2\rfloor))=\alpha(\lfloor n/2\rfloor).$
\end{theorem}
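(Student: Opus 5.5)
Let $x$ be a unit eigenvector of $A(G)$ for $\rho_2(G)$, and split $V(G)$ into $V_+=\{v:x_v>0\}$, $V_-=\{v:x_v<0\}$ and $V_0=\{v:x_v=0\}$. The idea, in the spirit of the Dirichlet viewpoint described in the introduction, is to treat $V_0$ as a Dirichlet boundary. This separates $G$ into the two ``nodal pieces'' $G[V_+]$ and $G[V_-]$. We will implicitly assume that $\mathcal C$ is closed under taking induced subgraphs, or at least that $G[V_\pm]$ and their relevant subgraphs lie in $\mathcal C$; we flag this assumption and will verify it case by case.

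First, $\rho_2(G)\ge 0$ whenever $G$ has an edge or $n\ge2$. Since $x\perp$ the Perron vector, which is nonnegative, we may assume both $V_+$ and $V_-$ are nonempty after handling degenerate cases. For $v\in V_+$ we have
\[
\rho_2 x_v=\sum_{u\sim v}x_u\le \sum_{u\sim v,\,u\in V_+}x_u ,
\]
because the neighbours in $V_-$ contribute negatively and those in $V_0$ contribute zero. So $x|_{V_+}$ is a positive vector with $A(G[V_+])x|_{V_+}\ge \rho_2 x|_{V_+}$ entrywise. By Perron--Frobenius (Collatz--Wielandt) this gives $\rho_2\le\rho_1(G[V_+])\le\alpha(|V_+|)$, and symmetrically $\rho_2\le \alpha(|V_-|)$. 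Since $|V_+|+|V_-|\le n$, one of them has size at most $\lfloor n/2\rfloor$. Strict monotonicity of $\alpha$ then yields $\rho_2(G)\le\alpha(\lfloor n/2\rfloor)$.

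For equality, both inequalities in the chain for the smaller side must be tight. Suppose $|V_+|\le|V_-|$. Then $|V_+|=\lfloor n/2\rfloor$ by strict monotonicity. Tightness in Collatz--Wielandt on each component forces three things: $A(G[V_+])x=\rho_2 x$ on $V_+$, no edges between $V_+$ and $V_-$, and $G[V_+]$ an extremal graph $G_1^*$ (if $G[V_+]$ were disconnected, one component would have fewer vertices and a strictly smaller bound). The same argument on $V_-$ gives $\rho_2\le\rho_1(G[V_-])$. We then split by parity.
\begin{itemize}
\item If $n$ is even: either $|V_-|=n/2$, in which case $V_-$ also has to be extremal and $V_0=\emptyset$, giving $G_1^*\cup G_2^*$; or $|V_-|<n/2$, which contradicts strictness for $V_-$ (it would have size below $\lfloor n/2\rfloor$ unless equal).
\item If $n$ is odd, with $|V_-|=\lceil n/2\rceil$ and $V_0=\emptyset$: we get $G_1^*\cup G_2$ with $\rho_1(G_2)\ge\alpha(\lfloor n/2\rfloor)$.
\item If $n$ is odd, with $|V_-|=\lfloor n/2\rfloor$ and $|V_0|=1$: both sides are extremal, and the zero vertex $w$ has $\sum_{u\sim w}x_u=0$. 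This gives $G_1^*\bullet K_1\bullet G_2^*$.
\end{itemize}
For the converse we check that these graphs attain equality. For the disjoint unions, the spectrum of $G$ is the union of the component spectra, so $\rho_2$ equals the smaller top eigenvalue. For $G_1^*\bullet K_1\bullet G_2^*$, interlacing on deleting the vertex $K_1$ gives $\rho_2(G)\ge \rho_2(G_1^*\cup G_2^*)=\alpha$, and combined with the upper bound this is equality. Finally, a connected $G$ rules out the disjoint-union cases, which leaves only $n$ odd with the $\bullet K_1\bullet$ form.

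The main obstacle is the equality analysis for odd $n$, in particular the bookkeeping when $|V_-|>\lfloor n/2\rfloor$. There we must make sure that $V_0$ is empty and that the tightness conditions really force no $V_+$--$V_-$ edges, rather than merely a weaker inequality. We must also handle the case where $x$ is not the ``right'' eigenvector when $\rho_2$ is a multiple eigenvalue. For that case we would first try taking a linear combination within the eigenspace that makes $|V_0|$ maximal, or choosing $x$ adapted to a suspected structure.
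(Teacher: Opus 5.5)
Your argument is correct in substance and is a more elementary version of the paper's. The paper cuts $|K(G)|$ along $\widetilde f^{-1}(0)$ and attaches to each nodal domain the Dirichlet adjacency operator $A(G[\Omega_U])-W_{\Omega_U}$, with boundary weights $-f(v)/f(u)$. It proves $\mu_1(G_U)=\rho_2$ and applies Weyl's inequality on a \emph{smallest} nodal domain; disconnected $G$ is treated afterwards, component by component.

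Your inequality $A(G[V_+])x_+\ge\rho_2x_+$ is the same mechanism without the geometry. The defect $A(G[V_+])x_+-\rho_2x_+$ is exactly $W x_+$ for the paper's weights, and Collatz--Wielandt replaces Weyl. Working with whole sign classes makes the counting $|V_+|+|V_-|\le n$ immediate. The irreducible Collatz--Wielandt equality case removes all $V_+$--$V_-$ edges on the smaller side. Your converse for $G_1^*\bullet K_1\bullet G_2^*$ (interlacing after deleting the zero vertex, plus the upper bound) is shorter than the paper's explicit eigenvector plus Lemma~\ref{lem:proper_subgraph_spectral_radius}. What the paper's formalism buys is the explicit boundary weights, which are needed for Theorem~\ref{thm:connected_even_bridge} and the Krahn--Szeg\H{o} part, but not here.

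Your hereditary assumption is the one the paper makes tacitly when it writes $\rho_1(G[\Omega_U])\le\alpha(|\Omega_U|)$. To match it, apply $\alpha$ to the components of $G[V_\pm]$, which are exactly the nodal domains. This matters for trees, where $G[V_+]$ is only a forest. It also gives your connectedness claim for $G[V_+]$ directly: in the equality case every component has at least $\lfloor n/2\rfloor$ vertices, so there is only one.

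The one real hole is the ``degenerate case'' you defer. For disconnected $G$ there may be no sign-changing $\rho_2$-eigenvector, and recombining inside the eigenspace cannot help when it is one-dimensional. Take $G=K_3\cup P_5$: $\rho_2(G)=\sqrt3$ is simple, with eigenvector positive on $P_5$ and zero on $K_3$. So $V_-=\emptyset$ and $|V_+|=5>\lfloor 8/2\rfloor$, and your argument yields nothing.

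The fix is to use a nonnegative Perron vector $y$ as the second side. Here $x\perp y$ with $x,y\ge 0$ forces disjoint supports, each a union of components, and $A(G[\operatorname{supp} y])\,y=\rho_1 y\ge\rho_2 y$. Running your argument with $V_+=\operatorname{supp}x$ and $V_-=\operatorname{supp}y$ gives the bound. The same equality analysis then produces the disjoint-union forms, including $2G_1^*\cup K_1$; this is what the paper's separate component-wise treatment achieves.

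Your other worries are unfounded. The equality analysis works for any sign-changing eigenvector, so multiplicity is irrelevant. $V_0=\emptyset$ when $|V_-|=\lceil n/2\rceil$ is just counting. Finally, drop the opening claim that $\rho_2(G)\ge 0$ whenever $G$ has an edge: it is false, since $\rho_2(K_n)=-1$, though nothing in your argument uses it.
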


The monotonicity assumption on $\alpha(s)$ holds for many graph classes.
For example, it holds for the class of connected graphs, trees, bipartite graphs, planar graphs, outerplanar graphs, $K_{r}$-free graphs, graphs with no long cycles, graphs with no chorded cycles, graphs containing no cycles of prescribed congruence length, $K_{r}$-minor-free graphs, and so on.

For a graph $G\in\mathcal C$ and a vertex $u\in V(G)$, let $I_{uu}$ denote the
diagonal matrix whose only nonzero entry is the $(u,u)$-entry, which is equal to $1$.
For each positive integer $t$, define
$ \beta(t)\coloneqq \max\{\sigma_1(A(G)-I_{v v}):G\in\mathcal C,\ |V(G)|=t,\ v\in V(G)\}.$

\begin{theorem}\label{thm:connected_even_bridge}
Let $\mathcal C$ be a graph class.
Let $n$ be even. 
Assume that  $\beta(t)>\alpha(s)$ for every positive integer $s<t$.
Suppose that the value $\beta(\frac{n}{2})$ is attained by a unique pair $(G^*(\frac{n}{2}),u^*)$, up to isomorphism. 
Let $x$ be the positive eigenvector corresponding to $\sigma_1(A(G^*(\frac{n}{2}))-I_{u^*u^*})$.
Then $x_{u^*}=\min_{v\in V(G^*(\frac{n}{2}))}x_v$.
Moreover, for every connected graph $G\in\mathcal C$ of order $n$, we have
\begin{align*}
    \rho_2(G)\leq \beta(\frac{n}{2}).
\end{align*}
The equality holds if and only if $G\cong 2G^*(\frac{n}{2})+u_1u_2$, where $2G^*(\frac{n}{2})+u_1u_2 \in \mathcal{C}$ and $u_i$ is the copy of $u^*$ in the $i$-th copy of $G^*(\frac{n}{2})$ for $i=1,2$.
\end{theorem}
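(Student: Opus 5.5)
Let $G\in\mathcal C$ be connected of order $n$ and let $f$ be an eigenvector of $A(G)$ for $\rho_2(G)$. We may assume $\rho_2(G)>0$; otherwise the bound is trivial, since $\beta(n/2)\ge \sigma_1(A(K_1)-I)=-1$ and in fact $\beta(n/2)>\alpha(1)=0$. Put $V_+=\{v:f_v>0\}$, $V_-=\{v:f_v<0\}$, $V_0=\{v:f_v=0\}$. In the spirit of the Dirichlet viewpoint of the paper, we regard $G$ as cut along $V_0$ and along the edges joining $V_+$ to $V_-$.

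\textbf{Step 1: eigenvalue bounds on the sign domains.} Restrict the eigen-equation to $v\in V_+$:
\[
\rho_2 f_v=\sum_{u\in V_+,\,u\sim v} f_u+\sum_{u\in V_-,\,u\sim v} f_u \le \sum_{u\in V_+,\,u\sim v} f_u .
\]
Let $H_+=G[V_+]$. Then $\rho_2\,f|_{V_+}\le A(H_+)f|_{V_+}$ entrywise with $f|_{V_+}>0$, so $\rho_2\le \rho_1(H_+)$ by the Collatz--Wielandt inequality. The same holds for $H_-=G[V_-]$.

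This is sharpened when a boundary edge exists. Suppose $u\in V_+$ and $w\in V_-$ are adjacent. Since $-f_w\ge$ something positive, we get a strict term at $u$, which we rescale to a term of the form $-f_u$. The clean way is to test the Rayleigh quotient of $A(H_+)-D_+$ on $f|_{V_+}$. Here $D_+$ is diagonal with $(D_+)_{vv}=\sum_{w\in V_-,\,w\sim v}|f_w|/f_v$. One obtains
\[
\rho_2\le \sigma_1(A(H_+)-D_+), \qquad \rho_2\le \sigma_1(A(H_-)-D_-).
\]
Symmetrizing the cut edges (Cauchy--Schwarz on $\sum f_u f_w$ over cut edges) gives $\rho_2\le\max$ of the two \emph{penalized} quantities, weighted so that the total penalty across the cut is at least the number of cut edges. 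A single cut edge $uw$ then yields $\rho_2\le\sigma_1(A(H_+)-I_{uu})$ for one of the two sides, provided $f_u^2\le f_w^2$ after choosing the side appropriately.

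\textbf{Step 2: size and case analysis.} Since $|V_+|+|V_-|\le n$, one side, say $H_+$, has at most $n/2$ vertices. Consider first the case where $V_0\ne\emptyset$ or the sides are unbalanced. Then either some side has $s<n/2$ vertices, or both have exactly $n/2$ vertices with no zero vertices. In the first case the bound is $\rho_2\le\alpha(s)$ with $s<n/2$; monotonicity together with $\beta(t)>\alpha(s)$ for $s<t$ gives strict inequality. Note that $H_\pm$ need to lie in $\mathcal C$; I will assume $\mathcal C$ is closed under induced subgraphs, or handle this as the paper does via the definition of $\alpha$. In the balanced case $|V_+|=|V_-|=n/2$, connectivity forces at least one cut edge. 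The penalized bound from Step 1 then gives $\rho_2\le\beta(n/2)$, or something even smaller when there are several cut edges, because the penalty is monotone: $\sigma_1(A-D)\le\sigma_1(A-I_{uu})$ when $D\ge I_{uu}$ after normalization.

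\textbf{Step 3: equality and the claim $x_{u^*}=\min x_v$.} Equality forces the balanced case with exactly one cut edge $u_1u_2$, both sides extremal for $\beta$, and, by uniqueness, $H_\pm\cong G^*(n/2)$ with $u_i$ the copy of $u^*$. For the converse, $2G^*+u_1u_2$ has the antisymmetric vector $(x,-x)$ as eigenvector with eigenvalue $\beta(n/2)$. The equality $\rho_2=\beta(n/2)$ then holds because the symmetric vector $(x,x)$ has a larger eigenvalue, and interlacing with the edge deletion bounds $\rho_2$ above.

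The statement $x_{u^*}=\min_v x_v$ I would prove by a switching argument. Suppose $x_v<x_{u^*}$ for some $v$. Compare $\sigma_1(A-I_{vv})$ with $\sigma_1(A-I_{u^*u^*})$ via the Rayleigh quotient at $x$: it gives $x^\top(A-I_{vv})x>x^\top(A-I_{u^*u^*})x$, so $\sigma_1(A-I_{vv})>\beta$, a contradiction. If instead $x_v=x_{u^*}$ with $v\ne u^*$, the same computation gives equality, so $x$ is also a top eigenvector for $v$. Then $(G^*,v)$ attains $\beta$, and uniqueness forces $v$ to be equivalent to $u^*$ up to automorphism; hence the minimum is attained at $u^*$.

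\textbf{Main obstacle.} The delicate point is Step 1's handling of cut edges. One must turn the cross terms into a penalty of exactly $I_{uu}$ on one side, with the correct normalization $f_u^2$ versus $f_w^2$. I would try the inequality $2f_uf_w\ge -(f_u^2+f_w^2)$ summed over cut edges. This splits $f^\top Af$ into $\sum_\pm f_\pm^\top(A(H_\pm)-D_\pm)f_\pm$, with $D_\pm$ a 0/1 diagonal counting cut edges. A convex-combination argument then picks the side whose penalized Rayleigh quotient is at least $\rho_2$.
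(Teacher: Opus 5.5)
Your argument is correct in substance. In the balanced case $|V_+|=|V_-|=\tfrac n2$ it takes a different and simpler route than the paper.

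Your identity $(A(H_+)-D_+)f_+=\rho_2 f_+$, with $f_+=f|_{V_+}>0$, is exactly what the paper extracts from its adjacency nodal domain theorem ($\rho_2=\mu_1(G_U)$ with boundary weights $-f(b_{ij})/f(a_i)$). From there the paper aggregates:
it sets $\theta_U=\operatorname{tr}W_{\Omega_U}$, uses Cauchy--Schwarz to get $\theta_U\theta_{U'}\ge(\sum_i q_i)^2\ge 1$, and passes to the side with $\theta_U\ge 1$. It then converts this total weight into a single unit penalty by a minimum-entry trick: if $\mathbf y$ is the Perron vector of $A(G[\Omega_U])-W_{\Omega_U}$ and $y_u$ is minimal, then $\mathbf y^\top(A(G[\Omega_U])-I_{uu})\mathbf y\ge \sigma_1(A(G[\Omega_U])-W_{\Omega_U})+\sum_v (W_{\Omega_U})_{vv}(y_v^2-y_u^2)$.

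You can bypass both steps. Fix any one cut edge $uw$ and orient it so that $f_u\le |f_w|$. Then $(D_+)_{uu}\ge |f_w|/f_u\ge 1$ however many other cut edges there are, so $D_+-I_{uu}$ is a nonnegative diagonal matrix, and
$\rho_2\le\sigma_1(A(H_+)-D_+)\le\sigma_1(A(H_+)-I_{uu})\le\beta(\tfrac n2)$.
This is presumably what your Step~2 monotonicity remark intends. Say explicitly that the side is chosen by comparing $|f|$ at the two ends of a single cut edge.

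The same inequality settles the equality case, which you only asserted. Since $\sigma_1(A(H_+)-I_{uu})\ge \rho_2+\frac{f_+^\top(D_+-I_{uu})f_+}{f_+^\top f_+}$ and $f_+>0$, equality forces $D_+=I_{uu}$. That means $uw$ is the only cut edge and $f_u=|f_w|$. Then $D_-=I_{ww}$ as well, so both $(H_+,u)$ and $(H_-,w)$ attain $\beta(\tfrac n2)$, and uniqueness gives $G\cong 2G^*(\tfrac n2)+u_1u_2$. The paper instead has to push equality through the min-entry step and then the equality case of Cauchy--Schwarz. What its aggregated form buys is a bound on the total Dirichlet boundary weight, natural in its framework, but not needed for this theorem.

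Several parts of your write-up should be removed or fixed:
\begin{itemize}
\item The ``main obstacle'' paragraph is superfluous, as is the Cauchy--Schwarz ``symmetrizing'' sentence in Step~1. Its proposed fix also runs the wrong way. With $C_\pm$ the diagonal matrices counting cut edges, $2f_uf_w\ge-(f_u^2+f_w^2)$ gives $f^\top A f\ge\sum_\pm f_\pm^\top(A(H_\pm)-C_\pm)f_\pm$, which is a \emph{lower} bound on $\rho_2$. The convex-combination step therefore only produces a side whose penalized Rayleigh quotient is \emph{at most} $\rho_2$, which bounds nothing.
\item In the converse, plain edge-deletion interlacing only gives $\rho_2(G)\le\rho_1(G^*(\tfrac n2))$, which is weaker than what you need. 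The upper bound must come from the inequality you have just proved, applied to $2G^*(\tfrac n2)+u_1u_2\in\mathcal C$, exactly as in the paper.
\item Like the paper, you tacitly need $G[V_\pm]\in\mathcal C$ to invoke $\alpha$ and $\beta$.
\item The preliminary reduction to $\rho_2>0$ is unnecessary, since neither Collatz--Wielandt nor your eigen-identity uses the sign of $\rho_2$.
\end{itemize}
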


In 1993, Favaron, Mah\'eo and  Sacl\'e~\cite{Favaron1993DM} proved the following bound on the second largest adjacency eigenvalue.

\begin{theorem}[\cite{Favaron1993DM}]
     Let $G \not\cong K_2$ be a graph with $m$ edges and clique number $\omega$. 
     Then $|\rho_2(G)|\leq m/\omega$.
\end{theorem}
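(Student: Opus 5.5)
We prove the Favaron--Mah\'eo--Sacl\'e bound $|\rho_2(G)|\le m/\omega$, for $G\not\cong K_2$, by splitting into two cases according to the sign of $\rho_2$.

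\textbf{Case $\rho_2(G)<0$.} Then $G$ has only one positive eigenvalue, so $G$ is a complete multipartite graph plus isolated vertices. The case $\rho_2=-1$ corresponds to $G$ being complete plus isolated vertices, where $m=\binom{\omega}{2}\ge \omega$ once $\omega\ge 3$. The exceptional small cases are $\omega\le 2$: $G$ is $K_2$ (excluded), $K_1$, or a complete bipartite graph $K_{a,b}$ with $ab\ge 2$, which has $\rho_2=0$. For a genuine complete multipartite graph that is not complete, the second eigenvalue is $0$. Hence only the complete case yields a negative $\rho_2$, and there $|\rho_2|=1\le m/\omega$.

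\textbf{Case $\rho_2(G)\ge 0$.} Assume first that $G$ is connected; otherwise apply the argument to the component carrying $\rho_2$, or use that $\rho_2(G)$ is either $\rho_1$ of some component or $\rho_2$ of a component. The plan is a nodal-domain argument.

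1. Take an eigenvector $x$ for $\rho_2$ and split $V$ into $V^+=\{x>0\}$ and $V^-=\{x<0\}$, both nonempty.
2. Restricting the eigen-equation to $V^+$ and multiplying by $x$ gives $\rho_2\le \rho_1(G[V^+])$, and likewise $\rho_2\le\rho_1(G[V^-])$. This step uses only that neighbours outside $V^+$ contribute nonpositive terms, which is the Dirichlet viewpoint of the paper.
3. Apply a clique-number spectral bound to each side. By Nikiforov's inequality, $\rho_1(H)^2\le 2(1-1/\omega(H))\,m(H)$, combined with Wilf-type bounds, one expects $\rho_1(H)\le m(H)/\omega(H)+$ (slack) for the relevant $H$. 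More simply: $\rho_1(H)\le \sqrt{2m(H)(\omega-1)/\omega}$.
4. Since $G[V^+]$ and $G[V^-]$ are edge-disjoint, $m_+ + m_-\le m-1$ (connectivity supplies at least one edge between the parts). Then
\[
\rho_2^2\le \min_\pm 2m_\pm\,\tfrac{\omega-1}{\omega}\le (m-1)\,\tfrac{\omega-1}{\omega}.
\]
5. It remains to check that $(m-1)(\omega-1)/\omega\le m^2/\omega^2$, i.e.\ $\omega(\omega-1)(m-1)\le m^2$. This holds when $m\ge \omega(\omega-1)$, and indeed $m\ge\binom{\omega}{2}$ always.

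\textbf{Main obstacle.} The main obstacle is step 5 when $m$ is between $\binom{\omega}{2}$ and $\omega(\omega-1)$, i.e.\ when $G$ is nearly a clique. There the crude Nikiforov estimate is too weak. For this range I would argue directly: $G$ contains $K_\omega$ plus fewer than $\binom{\omega}{2}$ extra edges. Interlacing with the clique (which has eigenvalue $-1$ with multiplicity $\omega-1$), together with an edge-count bound on the part $V^-$ that avoids most of the clique, should give $\rho_2\le\sqrt{2m_-}$ with $m_-\le m-\binom{\omega}{2}$. This should suffice, and the residual cases are small enough to check by hand.
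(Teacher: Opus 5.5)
\textbf{There is a genuine gap, at your step~5.}

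The paper does not prove this theorem; it only quotes Favaron--Mah\'eo--Sacl\'e. Your toolkit is the one the paper uses for its stronger Aouchiche--Hansen bound: the Dirichlet comparison $\rho_2\le\rho_1(G[V^{\pm}])$ followed by Nikiforov's bound (Theorem~\ref{thm:Nikiforov_fixed_size}). Your case $\rho_2<0$ and your steps 1--4 are fine.

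The problem is step~5. For $\omega\ge 3$ and $m\ge\binom{\omega}{2}$, the inequality $\omega(\omega-1)(m-1)\le m^2$ holds exactly when $m\ge\omega(\omega-1)-1$. The remark that $m\ge\binom{\omega}{2}$ always does not supply this. So the whole window $\binom{\omega}{2}\le m\le\omega(\omega-1)-2$ is left open, for every $\omega\ge 3$. That is an infinite family, not a handful of small cases to check by hand, and it is where the content of the theorem lies.

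\textbf{Your proposed patch does not close the window.}
\begin{enumerate}
\item The bound $m_-\le m-\binom{\omega}{2}$ is justified only if $V^-$ meets the chosen maximum clique $K$ in at most one vertex. Suppose $K$ has $a$ vertices in $V^+$, $b\le a$ in $V^-$, and $c$ in the zero set. If $b\ge 2$, you only get $m_-\le m-\binom{\omega}{2}+\binom{b}{2}$.
\item Even granting $m_-\le m-\binom{\omega}{2}$, the estimate $\rho_2\le\sqrt{2m_-}$ is too weak. For $\omega=10$ and $m=75$ it gives $\sqrt{60}>7.5=m/\omega$.
\end{enumerate}

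\textbf{What does work} is to keep Nikiforov's factor throughout and do the clique bookkeeping. Start from
$\rho_2^2\le\frac{\omega-1}{\omega}\cdot 2\min\{m_+,m_-\}\le\frac{\omega-1}{\omega}\min\{m_++m_-,\,2m_-\}$.
\begin{itemize}
\item If $b\le 1$, then $m_-\le m-\binom{\omega}{2}$. Hence $\rho_2^2\le\frac{4\binom{\omega}{2}\bigl(m-\binom{\omega}{2}\bigr)}{\omega^2}\le\frac{m^2}{\omega^2}$ by AM--GM.
\item If $b\ge 2$, use $m_++m_-\le m-ab-c(a+b)-\binom{c}{2}\le m-(a+c)b$ together with $m_-\le m-\binom{\omega}{2}+\binom{b}{2}$. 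The function $\frac{m^2}{\omega^2}-\frac{\omega-1}{\omega}\min\bigl\{m-(a+c)b,\,2\bigl(m-\binom{\omega}{2}+\binom{b}{2}\bigr)\bigr\}$ is convex in $m$. Its two pieces meet at $m=(\omega-1)(a+c)$, where it equals $(\omega-1)(a+c)b/\omega^2\ge 0$. Its one-sided slopes there have opposite signs because $\omega/2\le a+c\le\omega$, so it is nonnegative everywhere.
\end{itemize}
This is the analogue of the paper's $r_K$ count and Lemma~\ref{lem:new_packing}. It makes step~5, and the ``$-1$'' from connectivity, unnecessary.

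\textbf{Disconnected case.} Your reduction, applying the argument to the component carrying $\rho_2$, bounds $\rho_2$ by that component's $m/\omega$. That need not be at most $m/\omega$ for $G$. Instead, run the two-sided comparison directly in $G$. If $\rho_2(G)$ equals $\rho_1$ of a second component, take $V^{\pm}$ to be the vertex sets of two components with spectral radius at least $\rho_2(G)$.
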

This bound is not tight.
In 2010, Aouchiche and Hansen \cite{A-H2010} posed the following conjecture which enhanced this upper bound.
\begin{conjecture}\label{conj:second largest eigenvalue}
Let $G$ be a connected graph on $n$ vertices with clique number $\omega$
and second largest eigenvalue $\rho_2$. Then the following hold.
\begin{itemize}
    \item If $n$ is odd, then $|\rho_2|\cdot \omega \leq m-2$,
    with equality if and only if $G$ is composed of
    $K_{\frac{n+1}{2}}$ and $K_{\frac{n-1}{2}}$ linked by an edge, or
    $K_{\frac{n-1}{2}}$ and $K_{\frac{n-1}{2}}$ linked by a path.
    \item If $n$ is even, then $|\rho_2|\cdot \omega - m$ is maximized if and only if $G$ is composed of two copies of
    $K_{\frac{n}{2}}$ linked by an edge.
\end{itemize}
\end{conjecture}

Note that the conjecture is false when $n=3$. 
Indeed, for $G\cong K_3$, we have $m=3$, $\omega=3$, and $\rho_2=-1$. 
Then $|\rho_2|\omega=3>1=m-2$.
There is nothing to prove in the case $n=1$.
Therefore, the odd-order case of the conjecture has to be stated with the additional assumption $n\geq 5$.
After excluding this exceptional case, we prove the following corrected version of the conjecture.

\begin{theorem}\label{thm:second largest eigenvalue}
Let $G$ be a connected graph on $n$ vertices with clique number $\omega$
and second largest eigenvalue $\rho_2$. Then the following hold.
\begin{itemize}
    \item If $n \geq5$ is odd, then $|\rho_2|\cdot \omega \leq m-2$,
    with equality if and only if $G$ is composed of
    $K_{\frac{n+1}{2}}$ and $K_{\frac{n-1}{2}}$ linked by an edge, or
    $K_{\frac{n-1}{2}}$ and $K_{\frac{n-1}{2}}$ linked by a path.
    \item If $n\geq2$ is even, then $|\rho_2|\cdot \omega - m$ is maximized if and only if $G$ is composed of two copies of
    $K_{\frac{n}{2}}$ linked by an edge.
\end{itemize}
\end{theorem}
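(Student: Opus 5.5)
We split into the case $\rho_2\ge 0$, where the nodal domain machinery applies, and the case $\rho_2<0$.

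\emph{Case $\rho_2<0$.} By Smith's theorem, a connected graph with $\rho_2<0$ is complete. Then $\rho_2=-1$, $\omega=n$ and $m=\binom n2$. The quantity $|\rho_2|\omega-m=n-\binom n2$ is at most $-2$ for odd $n\ge5$, with strict inequality. For even $n$, comparing it with the candidate value below shows it is not maximal except in the small cases $n=2,4$, which we check directly. For $n=2$ the graph $K_2$ is itself ``two $K_1$ joined by an edge''. So assume from now on that $\rho_2\ge0$.

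\emph{Case $\rho_2\ge0$.} We take $x$ an eigenvector for $\rho_2$, which has both positive and negative entries. Let $V^+=\{x_v>0\}$ and $V^-=\{x_v<0\}$, and choose the labelling so that $|V^+|\le|V^-|$, whence $|V^+|\le\lfloor n/2\rfloor$. The Dirichlet restriction of $A$ to $V^+$ satisfies $\rho_2 x_v\le (A[V^+]x)_v$ for $v\in V^+$, because neighbours outside $V^+$ contribute nonpositively. Hence $\rho_2\le\rho_1(G[V^+])$. Writing $s=|V^+|$ and $\omega'=\omega(G[V^+])\le\omega$, Stanley's/Hong's bound gives
\[
\rho_1(G[V^+])\le \sqrt{2m(V^+)\,(1-1/\omega')}.
\]
A cleaner route here is Wilf's bound $\rho_1\le s(1-1/\omega')$, or the simple bound $\rho_1(H)\le \sqrt{2e(H)-|V(H)|+1}$. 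We want to show
\[
\rho_2\,\omega\le m-2 \qquad\text{resp.}\qquad \rho_2\,\omega - m\le \Bigl(\tfrac n2-2\Bigr)\omega_0-m_0 ,
\]
where $\omega_0=n/2$ and $m_0=2\binom{n/2}{2}+1$ are the values for two copies of $K_{n/2}$ joined by an edge.

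The key counting step is as follows. The graph has a clique of size $\omega$, and $m\ge e(G[V^+])+e(G[V^-])+(\text{edges between})+e(V^0)$. We split according to where the maximum clique lies. A clique meets $V^+$ and $V^-$ in at most $\omega$ vertices in total. Combining this with $\rho_2\le\min\{\rho_1(G[V^+]),\rho_1(G[V^-])\}$ (and the analogous bound for $V^-$), we obtain
\[
\rho_2\omega\le \rho_2\bigl(\omega(V^+)+\omega(V^-)+|V^0\cap K|\bigr).
\]
We then bound each term by $\rho_1(G[V^\pm])\,\omega(V^\pm)\le 2e(G[V^\pm])$, using the inequality $\rho_1(H)\omega(H)\le 2e(H)$, which follows from Nikiforov's $\rho_1^2\le 2e(1-1/\omega)$ together with $\rho_1\ge\omega-1$. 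This is not quite the right factor, so we would instead use $\rho_1(H)\le \omega(H)-1$ times a density correction. Here is where care is needed: for cliques, $\rho_1=\omega-1$ and $(\omega-1)\omega=2e$, which is tight. Summing and using connectivity (at least one edge leaving each sign class, plus extra edges through $V^0$) should give $\rho_2\omega\le m-2$. The connecting edges account for the $-2$: one edge in the bridge case, and two path edges through a zero vertex in the odd case with $K_{(n-1)/2}$, $K_{(n-1)/2}$ joined by a path of length $2$.

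Equality then forces both sign classes to be cliques of sizes $\lfloor n/2\rfloor$ and $\lceil n/2\rceil$ (or two equal cliques with one zero vertex), with exactly the minimum number of connecting edges. This matches the extremal graphs of \cref{lem:adjacency_nodal_domain} and \cref{thm:connected_even_bridge} in the class of all graphs, where $\alpha(s)=s-1$ and $\beta(n/2)$ is attained by $K_{n/2}$ with any vertex. For even $n$ we use \cref{thm:connected_even_bridge} to get $\rho_2\le\beta(n/2)<n/2-1$ and compare with $m$ via the clique count.

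The main obstacle is the inequality linking $\rho_2\omega$ to $m$ when the maximum clique is not contained in one sign class and the classes are far from complete. I would first try proving $\rho_1(H)\,\omega(G)\le 2e(H)+(\text{edges of }G\text{ incident to }H\text{ outside }H)$ by a direct Motzkin--Straus/Wilf argument, applied to the smaller of $V^\pm$, and handle small $n$ separately.
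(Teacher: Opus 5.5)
Your proposal has a genuine gap: the inequality $\rho_2\omega\le m-2$ is never actually derived.

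\textbf{The core inequality.} Summing $\rho_2|K\cap V^\pm|\le\rho_1(G[V^\pm])\,\omega(G[V^\pm])\le 2e(G[V^\pm])$ over the two sign classes loses a factor of $2$, as you notice. Neither proposed repair works:
\begin{itemize}
\item $\rho_1(H)\le\omega(H)-1$ already fails for stars.
\item The lemma $\rho_1(H)\,\omega(G)\le 2e(H)+e(H,V\setminus H)$ is false without further hypotheses: take $H=K_2$ hanging by one edge from $K_4$, which gives $4>3$.
\end{itemize}

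The missing idea is to take the \emph{minimum} over nodal domains instead of a sum. For the domain $\Omega_j$ with fewest internal edges, Nikiforov's bound gives $\rho_2^2\le\rho_1(G[\Omega_j])^2\le 2m_j(1-1/\omega)$. The position of a maximum clique then forces $m_j$ to be small. Suppose some domain contains $K_\omega$ and at least two edges are internal to no domain; this is automatic when the eigenvector has a zero. Then $m_j\le m-\binom{\omega}{2}-2$, and
\[
\frac{(m-2)^2}{\omega^2}-\frac{2(\omega-1)}{\omega}\Bigl(m-2-\binom{\omega}{2}\Bigr)=\frac{\bigl(m-2-\omega(\omega-1)\bigr)^2}{\omega^2}\ge 0 .
\]
This perfect square is where the $-2$ and the equality value $m=\omega^2-\omega+2$ come from.

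Two further situations are not handled by this step:
\begin{itemize}
\item When $K_\omega$ is split among domains and zero vertices, you must count its $ab+c(a+b)+\binom{c}{2}$ non-internal edges; this is the paper's \cref{lem:new_packing}.
\item The boundary configurations $m_1+r\in\{\binom{\omega}{2},\binom{\omega}{2}+1\}$ escape the counting altogether. There the paper needs strictness from positive Dirichlet boundary weights when $f$ has no zero, and the sharper near-complete bound of \cref{lem:near_complete_clique_estimate_combined_combined}. It also needs explicit characteristic polynomials: $K_\omega O_1K_{\omega-1}$, $K_\omega$ and $K_\omega-e$ joined by an edge in two ways, and two $6$-vertex graphs.
\end{itemize}
None of this is anticipated by the summation heuristic.

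\textbf{The even case is mis-specified.} With $\omega=n/2$,
\[
\rho_2(2K_{n/2}+e)=\frac{\omega-3+\sqrt{(\omega-1)(\omega+3)}}{2}\in(\omega-2,\omega-1),
\]
not $n/2-2$. So your target inequality is violated by $2K_{n/2}+e$ itself. The apparent exception of $K_4$ at $n=4$ is an artifact of this: the true score of $2K_{n/2}+e$ exceeds $-2\ge n-\binom n2$ for all even $n\ge 4$.

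The bound $\rho_2\le\beta(n/2)$ from \cref{thm:connected_even_bridge} cannot settle this case either, because $\omega$ and $m$ vary with $G$. What is needed is $\rho_2\omega-m\le -2$ for all but a few configurations. That must be followed by a direct comparison of $2K_{n/2}+e$ with the survivors, namely $K_\omega$ and $K_\omega-e$ joined by an edge, whose scores can exceed $-2$.

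\textbf{Equality description.} This is partly wrong. In $K_{(n+1)/2}$ joined by an edge to $K_{(n-1)/2}$, the $\rho_2$-eigenvector vanishes at the bridge endpoint in the larger clique. So in both odd extremal graphs, both sign classes are copies of $K_{(n-1)/2}$ with one zero vertex. Sign classes $K_{\lfloor n/2\rfloor}$ and $K_{\lceil n/2\rceil}$ never occur.
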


The rest of the paper is organized as follows. 
In Section 2, we introduce the geometric representation and nodal domains of a weighted graph, and we recall several tools from matrix theory. 
Section 3 is devoted to a Krahn--Szeg\H{o} type inequality on trees. 
In Section 4, we introduce the Dirichlet adjacency operator and establish general theorems concerning the second largest adjacency eigenvalue. 
In Section 5, we apply our main theorems to several concrete cases. 
In particular, we strengthen the result of Brooks, Gu, Hyatt, Linz, and Lu on outerplanar graphs by removing the “sufficiently large” condition. 
Moreover, we resolve the Aouchiche–Hansen conjecture (2010) on the second largest eigenvalue for graphs with a given number of edges and a given clique number. 
Finally, in Section 6, we illustrate that many further results can be derived from our main theorems.

\section{Preliminaries}
In this section, we introduce the basic notation and tools used in the proofs of our main theorem.

Let $G = (V, E)$ be an undirected simple finite graph, where $V(G)$ and $E(G)$ denote the vertex set and edge set of $G$, respectively.
For a vertex $v \in V(G)$, we use $N_S(v)$ to denote the neighborhood of $v$ in $S\subseteq V(G)$.
 Let $y \sim x$ denote that $y$ is adjacent to $x$.
For two subsets $X, Y \subseteq V(G)$, denote $E (X, Y) \coloneqq \set{xy \in E \mid x \in X, y \in Y}$.
Let $e(H)=|E(H)|$ for $H\subseteq G$ and $\omega(G)$ be the clique number of $G$.
A weighted finite graph is a triple $(G,m,w)$, where $m$ is vertex measure and $w$ is edge measure (weight) which satisfies $w_{xy}=w_{yx}>0$ if and only if $xy\in E(G)$.






\subsection{Geometric representation and nodal domain of a weighted graph}
Our proofs rely on the geometric representation of a graph, which we recall as follows.
\begin{definition}[Geometric representation]
\begin{enumerate}
    \item For a simple graph $G$, let $K(G)$ be the one-dimensional simplicial complexwhose vertex set $V(G)$ corresponds to the set of $0$-simplices in $K(G)$ and the edge set $E(G)$ corresponding the set of 1-simplices such that the boundary points of the 1-simplex $\{x, y\}$ are $x$ and $y$. 
    Then, $K(G)$ is called the one-dimensional simplicial complex representing $G$.

\item A weight on an abstract one-dimensional simplicial complex assigns a measure to each $0$-simplex and a length to each $1$-simplex.

\item Let $(G, m, w)$ be a weighted graph. Assign to each 0-simplex $x$ of $K(G)$ the measure $m_x$ and assign to each 1-simplex $\{x, y\}$ of $K(G)$ the length $l_{x y}=\frac{1}{w_{x y}}$. 
We will simply denote such a weighted one-dimensional simplicial complex as $\left(K(G), m, \frac{1}{w}\right)$ and call it the geometric representation of $(G, m, w)$. We also simply denote $\left(K(G), m, \frac{1}{w}\right)$ as $K(G)$.

\item Let $(G, m, w)$ be a weighted graph and $K(G)$ be its geometric representation.
We then identify each 1-simplex $\{x, y\}$ in $K(G)$ with the interval $\left[0, \frac{1}{w_{x y}}\right]$. 
Let $f \in \mathbb{R}^{V(G)}$, we denote by $\widetilde{f}:|K(G)| \rightarrow \mathbb{R}$ the edgewise linear extension of $f$.
Here $|K(G)|$ is the underlying topological space of $K(G)$.
\end{enumerate}
\end{definition}

Based on geometric representation, we can define the nodal domain of graphs.
\begin{definition}[Nodal domain]

Let $(G,m,w)$ be a weighted connected finite graph with boundary and $K(G)$ be its geometric realization. 
Let $f\in \mathbb{R}^{V(G)}$ be an eigenfunction of $G$ and $U$ be a connected component of $|K(G)|\setminus \widetilde f^{-1}(0)$.
Then, $U$  is called a nodal domain of $f$.
\end{definition}

\subsection{Fundamental tools in matrix theory}
In this subsection, we recall some classical tools in matrix theory, which are useful in our proofs.

For an $n\times n$ matrix $M$, we denote its eigenvalues by $\sigma_1(M)\geq \sigma_2(M)\geq \cdots \geq \sigma_n(M)$.
A real matrix is called a Metzler matrix if all its off-diagonal entries are nonnegative.
Let $I$ be the identity matrix.
The following lemma is a general form and a direct corollary of Perron-Frobenius theorem.
\begin{lemma}
\label{lem:Metzler matrix_PF_theorem}
Let $M=B-sI\in \mathbb{R}^{n\times n}$, where $B$ is a non-negative matrix and $s\in \mathbb{R}$, and assume that $M$ is an irreducible Metzler matrix. Then the following statements hold:
\begin{enumerate}
\item[(i)] $\sigma_1(M)=\sigma_1(B)-s$, and it is a simple eigenvalue of $M$;
\item[(ii)] there exists a unique vector $\mathbf{v}\in \mathbb{R}^n$, $\mathbf{v}>\mathbf{0}$, such that $M\mathbf{v}=\sigma_1(M)\mathbf{v}$;
\item[(iii)] there exists a unique vector $\mathbf{w}\in \mathbb{R}^n$, $\mathbf{w}>\mathbf{0}$, such that $\mathbf{w}^{\top}M=\mathbf{w}^{\top}\sigma_1(M)$.

\end{enumerate}
\end{lemma}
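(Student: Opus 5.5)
The statement just before this point is Lemma~\ref{lem:Metzler matrix_PF_theorem}, and I will reduce it directly to the Perron--Frobenius theorem for the nonnegative irreducible matrix $B$.

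\textbf{Irreducibility of $B$.} Irreducibility depends only on the off-diagonal zero pattern. $M=B-sI$ and $B$ have the same off-diagonal entries, so irreducibility of $M$ passes to $B$.

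\textbf{Perron--Frobenius for $B$.} Since $B$ is nonnegative and irreducible, the classical Perron--Frobenius theorem gives the following. The spectral radius $r=\rho(B)$ is a simple eigenvalue of $B$. It has a positive right eigenvector $\mathbf{v}>\mathbf{0}$, unique up to scaling. It has a positive left eigenvector $\mathbf{w}>\mathbf{0}$, obtained by applying the theorem to $B^{\top}$, which is also irreducible and nonnegative. Moreover, no other eigenvalue of $B$ has a nonnegative eigenvector.

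\textbf{Transfer to $M$.} The spectra satisfy $\operatorname{spec}(M)=\operatorname{spec}(B)-s$, with the same eigenvectors and the same multiplicities. Hence $r-s$ is a simple eigenvalue of $M$, with right and left eigenvectors $\mathbf{v}$ and $\mathbf{w}$.

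\textbf{Identifying $\sigma_1(M)$.} Here $\sigma_1$ denotes the largest eigenvalue, so I must check it is real and equals $r-s$. Every eigenvalue $\mu$ of $B$ satisfies $\operatorname{Re}\mu\le|\mu|\le r$. Therefore every eigenvalue of $M$ has real part at most $r-s$, and $r-s$ is the largest real eigenvalue, which gives $\sigma_1(M)=\sigma_1(B)-s$. In the paper's applications $M$ is symmetric, so all eigenvalues are real anyway. This proves (i).

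\textbf{Uniqueness in (ii) and (iii).} Positive eigenvectors are unique only up to positive scaling, so ``unique'' must be read under a normalization, for instance $\|\mathbf{v}\|=1$. Suppose $\mathbf{u}>\mathbf{0}$ is an eigenvector of $M$ for some eigenvalue. Then it is an eigenvector of $B$, and by Perron--Frobenius its eigenvalue must be $r$. Simplicity of that eigenvalue then forces $\mathbf{u}$ to be a multiple of $\mathbf{v}$. The same argument applied to $M^{\top}$ handles $\mathbf{w}$.

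The only subtle points are reading ``unique'' up to normalization and the meaning of $\sigma_1$ when $M$ is not symmetric; neither poses a genuine obstacle.
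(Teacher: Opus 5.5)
Your proposal is correct and takes the same route the paper intends: the paper gives no written proof and presents the lemma as a direct corollary of Perron--Frobenius for the irreducible nonnegative matrix $B$, with the spectrum shifted by $s$. Two points you add are left implicit in the paper: uniqueness in (ii)--(iii) holds only up to positive scaling, and $\sigma_1(M)$ should be read as the largest real eigenvalue, which dominates the real parts of all eigenvalues.
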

 In our proofs, we also need Weyl's inequality and Cauchy's interlacing theorem.
\begin{lemma}[{\cite[Corollary~4.3.15]{horn1985matrix}}]\label{thm:Weyl_inequality}
Let $A,B\in M_n$ be Hermitian matrices. 
Assume that the eigenvalues are arranged in decreasing order. 
Then
\begin{align*}
     \sigma_i(A)+\sigma_n(B)
    \leq
    \sigma_i(A+B)
    \leq
    \sigma_i(A)+\sigma_1(B),
    \qquad i=1,\cdots,n.
\end{align*}
Moreover, equality in the upper bound holds if and only if there exists a
nonzero vector $x$ such that $Ax=\sigma_i(A)x, Bx=\sigma_1(B)x,$
and $(A+B)x=\sigma_i(A+B)x$.
The equality in the lower bound holds if and only if there exists a nonzero vector $x$ such that $Ax=\sigma_i(A)x, Bx=\sigma_n(B)x,$
and $(A+B)x=\sigma_i(A+B)x.$
If $A$ and $B$ have no common eigenvector, then every inequality above is strict.
\end{lemma}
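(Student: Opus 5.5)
The plan is to derive both inequalities from the Courant--Fischer min--max characterization, using a dimension-counting argument that produces one explicit test vector. The equality characterization then falls out by tracking when each inequality along the chain is tight.

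For the upper bound, fix orthonormal eigenvectors $u_1,\dots,u_n$ of $A$ and $w_1,\dots,w_n$ of $A+B$, with $Au_j=\sigma_j(A)u_j$ and $(A+B)w_j=\sigma_j(A+B)w_j$. Let $S=\mathrm{span}(w_1,\dots,w_i)$, of dimension $i$, and $T=\mathrm{span}(u_i,\dots,u_n)$, of dimension $n-i+1$. Since the dimensions sum to $n+1$, we have $S\cap T\neq\{0\}$, so we may pick a unit vector $x\in S\cap T$. Membership $x\in S$ gives $x^*(A+B)x\ge \sigma_i(A+B)$, and $x\in T$ gives $x^*Ax\le\sigma_i(A)$. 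Together with $x^*Bx\le\sigma_1(B)$ this yields the chain
\begin{align*}
\sigma_i(A+B)\le x^*(A+B)x = x^*Ax+x^*Bx\le \sigma_i(A)+\sigma_1(B).
\end{align*}
For the lower bound, apply the upper bound to the decomposition $A=(A+B)+(-B)$ and use $\sigma_1(-B)=-\sigma_n(B)$.

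For the equality case in the upper bound, suppose $\sigma_i(A+B)=\sigma_i(A)+\sigma_1(B)$. Then all three inequalities in the chain are equalities for the chosen $x$.
\begin{itemize}
\item From $x^*Bx=\sigma_1(B)$ with $\|x\|=1$, expanding $x$ in an eigenbasis of $B$ shows that $x$ has no component on eigenvectors with eigenvalue below $\sigma_1(B)$. Hence $Bx=\sigma_1(B)x$.
\item Since $x\in T$, we have $x^*Ax=\sum_{j\ge i}\sigma_j(A)|\langle u_j,x\rangle|^2$. Equality with $\sigma_i(A)$ forces the coefficients on every $u_j$ with $\sigma_j(A)<\sigma_i(A)$ to vanish, so $Ax=\sigma_i(A)x$.
\item The same argument applied to $x\in S$ and $A+B$ gives $(A+B)x=\sigma_i(A+B)x$.
\end{itemize}
Conversely, if such an $x$ exists, then adding $Ax=\sigma_i(A)x$ and $Bx=\sigma_1(B)x$ gives $(A+B)x=(\sigma_i(A)+\sigma_1(B))x$. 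Comparing with $(A+B)x=\sigma_i(A+B)x$ shows that equality holds. The lower-bound equality case follows by the same substitution $A=(A+B)+(-B)$, since eigenvectors of $-B$ for $\sigma_1(-B)$ are exactly eigenvectors of $B$ for $\sigma_n(B)$. The final assertion is then immediate: equality in either bound produces a common eigenvector of $A$ and $B$, so if no common eigenvector exists, every inequality is strict.

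The only delicate step is the equality analysis when eigenvalues are repeated. There one must argue with the explicit expansion over $T$ (respectively $S$), rather than claiming $x$ is a particular $u_i$. Only the components belonging to strictly smaller (respectively strictly larger) eigenvalues are forced to vanish, and that is exactly what makes $x$ an eigenvector.
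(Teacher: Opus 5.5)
Your proof is correct and complete: the intersection of $\mathrm{span}(w_1,\dots,w_i)$ with $\mathrm{span}(u_i,\dots,u_n)$, the Rayleigh-quotient equality analysis (handled correctly when eigenvalues repeat), and the reduction of the lower bound and its equality case to the upper bound via $A=(A+B)+(-B)$ all go through. The paper gives no proof of its own and simply cites Horn--Johnson, whose argument for Weyl's inequalities and their equality cases is this same Courant--Fischer dimension-counting route, so your approach is essentially the standard one.
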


\begin{lemma}[{\cite[Theorem~9.1.1]{Godsil2001GTM}}]\label{cor:spectral_radius_monotonicity}
Let $A$ be a real symmetric $n \times n$ matrix and let $B$ be a principal submatrix of $A$ with order $m \times m$. Then, for $i = 1, \dots, m$,
\[
\sigma_{n-m+i}(A) \leq \sigma_i(B) \leq \sigma_i(A).
\]
\end{lemma}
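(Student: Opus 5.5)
The plan is to derive both inequalities from the Courant--Fischer min--max characterization of eigenvalues of real symmetric matrices. First I would realize the principal submatrix as a compression. Let $S\subseteq\{1,\dots,n\}$ with $|S|=m$ index the rows and columns of $A$ forming $B$, and let $P\in\mathbb{R}^{n\times m}$ be the matrix whose columns are the standard basis vectors $e_j$, $j\in S$. Then $P^{\top}P=I_m$ and $B=P^{\top}AP$. Consequently, for every nonzero $y\in\mathbb{R}^m$ the vector $x=Py\neq 0$ satisfies
\[
\frac{y^{\top}By}{y^{\top}y}=\frac{x^{\top}Ax}{x^{\top}x}.
\]
Moreover, $P$ maps every $k$-dimensional subspace of $\mathbb{R}^m$ injectively onto a $k$-dimensional subspace of $\mathbb{R}^n$.

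For the upper bound I would use the max--min form
\[
\sigma_i(M)=\max_{\dim U=i}\ \min_{0\neq x\in U}\frac{x^{\top}Mx}{x^{\top}x}.
\]
Take an $i$-dimensional subspace $U\subseteq\mathbb{R}^m$ attaining $\sigma_i(B)$. Then $PU$ is an $i$-dimensional subspace of $\mathbb{R}^n$ with the same set of Rayleigh quotients. Hence $\sigma_i(B)$ equals the minimum Rayleigh quotient of $A$ on $PU$, which is at most $\sigma_i(A)$.

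For the lower bound I would use the min--max form
\[
\sigma_j(M)=\min_{\dim W=N-j+1}\ \max_{0\neq x\in W}\frac{x^{\top}Mx}{x^{\top}x}
\]
for an $N\times N$ matrix $M$. Take an $(m-i+1)$-dimensional subspace $W\subseteq\mathbb{R}^m$ attaining $\sigma_i(B)$. Then $PW$ has dimension $m-i+1=n-(n-m+i)+1$. Therefore $\sigma_{n-m+i}(A)$ is at most the maximum Rayleigh quotient of $A$ on $PW$, which equals $\sigma_i(B)$.

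I do not expect a genuine obstacle. The only points needing care are the index bookkeeping in the lower bound (matching $m-i+1$ with $n-j+1$ for $j=n-m+i$) and the observation that $P$ preserves both dimension and Rayleigh quotients. Both follow directly from $P^{\top}P=I_m$.
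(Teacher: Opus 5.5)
Your proof is correct: writing $B=P^{\top}AP$ with $P^{\top}P=I_m$ and pushing the extremal subspaces through $P$ in the Courant--Fischer characterizations gives both inequalities, and your index bookkeeping $m-i+1=n-(n-m+i)+1$ is right. The paper does not prove this lemma itself and only cites Godsil--Royle, whose proof is essentially your argument: an orthonormal compression followed by a Rayleigh-quotient dimension count, with the lower bound obtained by applying the upper bound to $-A$ and $-B$ rather than through the min--max form.
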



\section{The second smallest Dirichlet eigenvalue}

\subsection{Dirichlet Laplacian operator}

A finite weighted graph with boundary is a quadruple $(G, B,m,w)$, where $B \subset V$ is a non-empty set. 
The set $B$ is called the boundary and $\Omega \coloneqq V \setminus B$ is called  the interior. 
If $G$ is a tree, then we denote the pair $(G,B)$ by $T$ for simplicity, where we choose all leaves as boundary vertices.
For $G$, let $\Delta(G)$ be the Laplacian operator on $G$.
When the graph $G$ is clear from the context, we write $\Delta$ for simplicity.

The Dirichlet eigenvalue problem on a graph $G$ with boundary $B$ is defined as follows.
\begin{equation*}
    \begin{cases}
        -\Delta f(x) = \lambda f(x), & x\in \Omega, \\ 
         f(x) = 0, & x\in B,
    \end{cases}
\end{equation*}
where $\Delta f(x) = \frac{1}{m_x}\sum\limits_{y \sim x} w_{xy}(f(y)-f(x))$.
We call $\lambda$ the Dirichlet eigenvalue and $f$ the eigenfunction corresponding to $\lambda$.
The Dirichlet eigenvalues are ordered by $0 < \lambda_{1} \leq \lambda_{2} \leq \ldots \leq \lambda_{|\Omega|}$. 
For any $0 \neq f \in \RR^{\Omega}$, the Rayleigh quotient of Dirichlet Laplacian $\Delta_\Omega$ is defined as
\begin{align*}
    R_{(G,B)}(f) &\coloneqq \frac{\sum\limits_{xy \in E(G) }w_{xy}(\widehat{f}(x)-\widehat{f}(y))^{2}}{\sum\limits_{x \in \Omega} f^{2}(x)m_x},
\end{align*}
where $\widehat{f}$ is obtained by extending $f$ to $\RR^V$ by $0$.
The variational characterizations of $\lambda_{k}$ are given by
\begin{align*}
    \lambda_{k}(G,B) &= \min_{\substack{W \subset \mathbb{R}^{\Omega}\\ \dim W=k}} \max_{\substack{0 \neq f \in W}} R_{(G,B)}(f).
\end{align*}
Define a matrix $L_\Omega = (l_{xy})_{\Omega \times \Omega}$, where
\begin{align*}
    l_{xy} = 
    \begin{cases}
        \frac{1}{m_x}\sum\limits_{z\sim x}w_{xz}, & \textif x = y, \\
        -\frac{w_{xy}}{m_x}, & \textif x \sim y, \\
        0, &  \textotherwise.
    \end{cases}
\end{align*}
 When $\Omega$ is finite, the eigenvalue problem for $L_\Omega$ is equivalent to the Dirichlet eigenvalue problem.
Therefore, by Lemma~\ref{lem:Metzler matrix_PF_theorem}, we obtain the following proposition directly.
\begin{proposition}\label{pro:Dirichlet eigenvalue_properties}
Let $(G, B,m,w)$ be a weighted connected finite graph with boundary. 
Then
\begin{enumerate}
\item $-\Delta_\Omega$ is a positive operator, i.e. $\lambda_1(G,B) > 0$;
\item an eigenfunction $f$ corresponding to the eigenvalue $\lambda_1(G,B)$ of $-\Delta_\Omega$ is either positive or negative on all interior vertices of $G$;
\item $\lambda_1(G,B)$ is a simple eigenvalue.
\end{enumerate}
\end{proposition}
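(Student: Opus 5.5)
The plan is to obtain all three items from \cref{lem:Metzler matrix_PF_theorem} applied to a suitably symmetrized form of the matrix $L_\Omega$, whose eigenvalue problem is equivalent to the Dirichlet eigenvalue problem. Write $L_\Omega = D_m^{-1}(D_w - W_\Omega)$. Here $D_m=\mathrm{diag}(m_x)_{x\in\Omega}$, $D_w=\mathrm{diag}(\sum_{z\sim x}w_{xz})_{x\in\Omega}$ (the sum runs over all neighbours, including those in $B$), and $W_\Omega$ is the weight matrix restricted to $\Omega\times\Omega$. The matrix $L_\Omega$ is not symmetric in general, so I would pass to the similar matrix $S=D_m^{1/2}L_\Omega D_m^{-1/2}=D_m^{-1/2}(D_w-W_\Omega)D_m^{-1/2}$. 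It is real symmetric and has the same spectrum. Its eigenvectors are $D_m^{1/2}f$, so positivity and sign patterns of eigenvectors transfer unchanged.

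Next I would apply \cref{lem:Metzler matrix_PF_theorem} to $M=-S=B'-sI$. Here $B'=D_m^{-1/2}W_\Omega D_m^{-1/2}+sI-D_m^{-1/2}D_wD_m^{-1/2}$ with $s$ chosen large enough that $B'\ge 0$. The off-diagonal entries of $M$ are $w_{xy}/\sqrt{m_xm_y}\ge 0$, so $M$ is Metzler. For irreducibility I need $G[\Omega]$ to be connected. When $G$ is connected I claim this is fine componentwise: if $G[\Omega]$ splits, each component is handled separately. I would either assume, as in the paper's tree setting, that the interior is connected (the interior of a tree with leaves as boundary is connected), or treat each interior component separately. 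Items (2) and (3) then follow from (i) and (ii) of the lemma. $\sigma_1(M)=-\lambda_1$ is simple, with an eigenvector that has a strictly positive representative, and multiplying back by $D_m^{-1/2}$ gives an eigenfunction $f$ that is positive on all of $\Omega$. Simplicity forces every eigenfunction to be a scalar multiple of this one, hence either positive or negative throughout.

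For item (1), I would use the Rayleigh quotient: $\lambda_1=\min R_{(G,B)}(f)\ge 0$. Suppose equality held. Then $\widehat f$ would be constant across every edge, hence constant on the connected graph $G$. Since $B\neq\emptyset$ and $\widehat f|_B=0$, this forces $f=0$, a contradiction. So $\lambda_1>0$.

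The main obstacle is the irreducibility hypothesis, since connectivity of $G$ does not imply connectivity of $G[\Omega]$. I would state that (2) and (3) are meant for connected interior, as is automatic for trees whose boundary consists of leaves, or note that the Perron argument applies on each interior component.
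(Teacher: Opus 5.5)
Your main line is the paper's own argument. The paper identifies the Dirichlet problem with the eigenproblem for $L_\Omega$ and cites Lemma~\ref{lem:Metzler matrix_PF_theorem} ``directly''. Your symmetrization $S=D_m^{1/2}L_\Omega D_m^{-1/2}$ is not needed to apply that lemma, since $-L_\Omega$ is already Metzler. It is still useful, because it makes the real spectrum and the identification of $\lambda_1$ with $\min R_{(G,B)}$ transparent.

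You are right that the lemma requires irreducibility, i.e.\ connectivity of $G[\Omega]$, and that the paper leaves this implicit. However, drop the fallback that a disconnected interior is ``fine componentwise'': items (2) and (3) are simply false in that case. Take the path $a\sim b\sim c$ with $B=\{b\}$ and $m\equiv w\equiv 1$. Then $L_\Omega=I_2$, so $\lambda_1=1$ has multiplicity two, and both $(1,0)^\top$ and $(1,-1)^\top$ are $\lambda_1$-eigenfunctions, even though $G$ is connected. Working on each interior component separately only gives Perron--Frobenius on each component. Then $\lambda_1$ is the minimum over components, which may be attained on several of them, and the corresponding eigenfunctions vanish on the other components.

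So the only correct repair is to add the hypothesis that $G[\Omega]$ is connected. This costs nothing in the paper: every place the proposition is used has connected interior. That covers the nodal-domain graphs $G_U$, where $G_U[\Omega_U]$ is connected as noted in the proof of Theorem~\ref{thm:dirichlet_nodal_D}, and trees whose boundary is the set of leaves.

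Your Rayleigh-quotient proof of (1) is correct and needs only connectivity of $G$ and $B\neq\emptyset$, not connectivity of $\Omega$. Keep it. Lemma~\ref{lem:Metzler matrix_PF_theorem} as stated gives simplicity and a positive eigenvector but says nothing about the sign of $\lambda_1$. The paper's one-line citation therefore does not actually cover item (1) without an extra argument, such as yours or a row-sum bound for irreducible nonnegative matrices.
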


 For any $x, y \in |K (G)|$, if $x$ and $y$ are contained in the same edge of $K (G)$, we denote by $[xy]$ the part of that edge lying between $x$ and $y$.
Next, we introduce  notation for geometric realization  and  the edge weights in Dirichlet setting.
\begin{definition}\label{def:Dirichlet_U}
Let $(G,B,m,w)$ be a finite weighted graph with boundary, and $K(G)$ be the geometric realization of $G$. 
Let $\Omega=V(G)\setminus B$. 
For any connected open subset $U$ of $|K(G)|$, we define its induced graph $G_U$ to be the weighted
graph with Dirichlet boundary as follows.

\begin{enumerate}
    \item
    $V(G_U)=\{x\in \overline U:\ x\in V(G)\text{ or }x\in \partial U\}$;

    \item
    $E(G_U)=\{xy:\ x\neq y\in V(G_U),\ x,y\text{ lie on the same edge of }
    K(G),\text{ and }(xy)\subset U\}$, where $(xy) = [xy] \setminus \{x, y\}$;

    \item $B_U=B\cap \overline U$, $B_D(G_U)=\partial U\setminus B_U$, $\partial_DG_U=B_U\cup B_D(G_U)$;

    \item $\Omega_U=V(G_U)\setminus \partial_DG_U=U\cap \Omega$;

    \item for any $xy\in E(G_U)$, $w_{xy}=\frac{1}{l_{x y}}$;

    \item for any $x\in \Omega_U$, $m_x=m_x$.
\end{enumerate}

The measures of boundary vertices do not affect the Dirichlet eigenvalue problem.
Let $\widetilde f: |K(G)|\to \mathbb R$ denote the edgewise linear extension of $\widehat f$ to $|K(G)|$.
Let $U$ be a connected component of $|K(G)|\setminus \widetilde f^{-1}(0)$. 
By the definition
of $G_U$, every vertex in $\partial_DG_U$
lies in $\widetilde f^{-1}(0)$. 
For $B_D(G_U)$, if $uv\in E(G)$,
$u\in \Omega_U$, $v\in \Omega\setminus \Omega_U$, and $f(v)=0$, then we
put a copy $\widetilde v$ of $v$ into $B_D(G_U)$. If $uv\in E(G)$,
$u\in \Omega_U$, $v\in \Omega\setminus \Omega_U$, and $f(u)f(v)<0$, then
there is a unique zero point $z_{uv}$ of $\widetilde f$ on the edge
$uv$, and we put $z_{uv}$ into $B_D(G_U)$. 
Thus $B_D(G_U)= B_U^0\cup \widetilde B_U^0,$
where
\begin{align*}
    B_U^0
    &=
    \{\widetilde v:\ u\in \Omega_U,\ v\in \Omega\setminus \Omega_U,\ uv\in E(G),
    \ f(v)=0\},\\
    \widetilde B_U^0
    &=
    \{z_{uv}:\ u\in \Omega_U,\ v\in \Omega\setminus \Omega_U,\ uv\in E(G),
    \ f(u)f(v)<0,\ \widetilde f(z_{uv})=0\}.
\end{align*}
\end{definition}
For $uv\in E(G_U)$, if $f(u)f(v)\geq 0$, then $uv\in E(G)$ and its weight remains unchanged.
If $f(u)f(v)<0$, then we obtain a vertex $z_{uv}\in \widetilde B_U^0$ that lies on  $uv$, and we have
\begin{align*}
    w_{uz_{uv}}=\frac{1}{l_{uz_{uv}}} =w_{uv}\frac{|f(u)-f(v)|}{|f(u)|}, \quad 
    w_{vz_{uv}}=\frac{1}{l_{vz_{uv}}} =w_{uv}\frac{|f(u)-f(v)|}{|f(v)|}.
\end{align*}


Now, we can prove the Dirichlet nodal domain theorem.

\begin{theorem}\label{thm:dirichlet_nodal_D}
Let $(G,B,m,w)$ be a finite connected weighted graph with boundary. 
Let $f$ be a Dirichlet eigenfunction of $(G,B)$ corresponding to the eigenvalue $\lambda$, which is not the smallest one.
For each nodal domain $U$ of $f$,
$ f|_{\Omega_U}$ is a Dirichlet eigenfunction of $G_U$ with
vanishing data on $\partial_DG_U=B_U\cup B_D(G_U)$. Moreover, $\lambda_1(G_U,\partial_DG_U)=\lambda.$
\end{theorem}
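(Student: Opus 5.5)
Fix a nodal domain $U$. Since $U$ is a connected component of $|K(G)|\setminus\widetilde f^{-1}(0)$, the function $\widetilde f$ has constant sign on $U$; without loss of generality $f>0$ on $\Omega_U$. The argument has two steps. First we check that $f|_{\Omega_U}$, extended by zero on $\partial_DG_U$, satisfies the Dirichlet eigenvalue equation on $G_U$ with eigenvalue $\lambda$. Then we invoke the positivity statement in Proposition~\ref{pro:Dirichlet eigenvalue_properties} to identify $\lambda$ as the first eigenvalue.

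\textbf{Step 1: the eigenvalue equation on $G_U$.} Take $x\in\Omega_U$, so $f(x)>0$ and $x\in\Omega$. We sort the neighbours $y$ of $x$ in $G$ and compare the term $w_{xy}(f(y)-f(x))$ in $\Delta f(x)$ with the corresponding contribution in $G_U$.
\begin{itemize}
\item If $f(y)>0$ and $y\in\Omega$, then $y\in\Omega_U$ and the edge $xy$ lies in $G_U$ with unchanged weight, so the term is identical.
\item If $y\in B$, or $y\in\Omega$ with $f(y)=0$, then $\widehat f(y)=0$. The vertex $y$ (or its copy $\widetilde y\in B_U^0$) is a boundary vertex of $G_U$ joined to $x$ with weight $w_{xy}$, so the term equals $w_{xy}(0-f(x))$ in both graphs.
\item If $f(y)<0$, the edge $xy$ of $G$ is replaced by the edge $xz_{xy}$ with weight $w_{xy}\frac{f(x)-f(y)}{f(x)}$ and $\widehat f(z_{xy})=0$. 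The new contribution is
\begin{align*}
w_{xy}\frac{f(x)-f(y)}{f(x)}\,(0-f(x)) = w_{xy}(f(y)-f(x)),
\end{align*}
which matches the original term exactly.
\end{itemize}
The measure $m_x$ is unchanged, so $-\Delta_{G_U}f(x)=-\Delta_G f(x)=\lambda f(x)$ for all $x\in\Omega_U$, and $f$ vanishes on $\partial_DG_U$. This proves the first claim. One point of bookkeeping needs care: distinct zero neighbours and distinct crossing edges must give distinct boundary vertices of $G_U$. This holds by construction, since we take copies $\widetilde v$ and points $z_{uv}$ on different edges.

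\textbf{Step 2: $\lambda=\lambda_1(G_U,\partial_DG_U)$.} Here we need $G_U$ to be connected, which holds because $U$ is a connected open set and its vertices and edges are exactly those of $\overline U$. We also need $\partial_DG_U\neq\emptyset$. This is the step requiring an argument. Suppose $\partial_DG_U=\emptyset$. Then no edge leaves $U$ towards a zero or a sign change, and $U$ contains no boundary vertex of $G$. By connectedness of $G$, this forces $U$ to be all of $|K(G)|$. But $B\neq\emptyset$ and $f$ vanishes on $B$, a contradiction.

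With $G_U$ connected and $\partial_DG_U$ nonempty, the Metzler matrix $-L_{\Omega_U}$ is irreducible (after symmetrizing by the measure). Then $f|_{\Omega_U}>0$ is a positive eigenvector of $L_{\Omega_U}$. By Lemma~\ref{lem:Metzler matrix_PF_theorem}, a positive eigenvector can only belong to the principal eigenvalue, since it cannot be orthogonal, in the $m$-weighted inner product, to the positive Perron vector. Hence $\lambda=\lambda_1(G_U,\partial_DG_U)$.

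The hypothesis that $\lambda$ is not the smallest eigenvalue is not needed for this argument. It guarantees that $f$ changes sign, so that there are at least two nodal domains, and is used in later applications. I expect the main obstacle to be the careful treatment of zero interior vertices shared by several nodal domains, with each domain receiving its own copy. The algebraic cancellation for the sign-change points $z_{xy}$ is mechanical.
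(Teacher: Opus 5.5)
Your proposal is correct and follows the paper's proof: an edge-by-edge check of the eigenvalue equation (using the rescaled weight $w_{xy}\frac{f(x)-f(y)}{f(x)}$ on cut edges), then Perron--Frobenius to identify $\lambda$ with $\lambda_1(G_U,\partial_DG_U)$. Your Step~2 is slightly more careful than the paper's. The paper only cites that first eigenfunctions have constant sign, but the direction actually needed is yours: a positive eigenvector of an irreducible Metzler matrix belongs to the top eigenvalue. Here irreducibility should be attributed to connectedness of $G[\Omega_U]$, which follows from connectedness of $U$, rather than to connectedness of $G_U$. You are also right that the ``not the smallest'' hypothesis is unnecessary, since $B\neq\emptyset$.
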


\begin{proof}
Fix a  nodal domain $U$ of $f$.
By the definition of nodal domain,
$\Omega_U\neq\emptyset$ and $G_U[\Omega_U]$ is connected. 
Moreover, since $\lambda$ is not the smallest eigenvalue, the eigenfunction
$f$ cannot be everywhere positive or everywhere negative. 
Then $U\neq |K(G)|$. 
Take $u\in \Omega_U$. 
If $uv$ is an edge of $G$ with
$v\in \Omega\setminus \Omega_U$ and $f(v)\neq 0$, then $uv$ is a
sign-changing edge. 
Let $z_{uv}$ be the unique zero point of
$\widetilde f$ on $uv$. 
We have 
\begin{align*}
    w_{uz_{uv}}=\frac{1}{l_{uz_{uv}}} =w_{uv}\frac{|f(u)-f(v)|}{|f(u)|}, \quad 
    w_{vz_{uv}}=\frac{1}{l_{vz_{uv}}} =w_{uv}\frac{|f(u)-f(v)|}{|f(v)|}.
\end{align*}
It follows that, for any $u\in\Omega_U$,
\begin{align*}
\lambda f(u)&=\frac{1}{m_u}
\sum_{\substack{v\in V(G)\\ v\sim u}}
w_{uv}\bigl(\widehat f(u)-\widehat f(v)\bigr)\\
&=\frac{1}{m_u}
\sum_{\substack{v\sim u\\ v\in \Omega_U}}
w_{uv}\bigl( f(u)- f(v)\bigr)
+
\frac{1}{m_u}\sum_{\substack{v\sim u\\ v\in B_U}}
w_{uv}\bigl( f(u)-0\bigr)\\
&\quad
+
\frac{1}{m_u}\sum_{\substack{\widetilde v\sim u\\ \widetilde v\in B_U^0}}
w_{u\widetilde v}\bigl( f(u)-\widetilde f(\widetilde v)\bigr)
+
\frac{1}{m_u}\sum_{\substack{z_{uv}\sim u\\ z_{uv}\in \widetilde B_U^0}}
w_{uz_{uv}}\bigl( f(u)-\widetilde f(z_{uv})\bigr)\\
&=\frac{1}{m_u}
\sum_{\substack{y\in V(G_U)\\ y\sim u}}
w_{uy}\bigl( \widetilde f(u)- \widetilde f(y)\bigr)=-\Delta_{\Omega_U}({G_U}) f(u).
\end{align*}
Thus, $\lambda$ is an eigenvalue of $-\Delta_{\Omega_U}({G_U})$ and $ f|_{\Omega_U}$ is a Dirichlet eigenfunction of $G_U$ with vanishing data on $\partial_DG_U$.
It remains to show that $\lambda =\lambda_1(G_U,\partial_DG_U)$. 
Since $U$ is a nodal domain, $f$ has a fixed sign on $\Omega_U$ and $f(u)\neq 0$ for all $u\in\Omega_U$.
By Proposition~\ref{pro:Dirichlet eigenvalue_properties}, this implies $\lambda_1(G_U,\partial_DG_U)=\lambda(G,B)$.
The proof is complete.
\end{proof}

\begin{figure}[htbp]
\centering
\begin{tikzpicture}[
    edge/.style={line width=0.8pt},
    posnode/.style={circle,draw=red!70!black,fill=red!35,minimum size=6pt,inner sep=0pt},
    negnode/.style={circle,draw=blue!70!black,fill=blue!35,minimum size=6pt,inner sep=0pt},
    solidzeronode/.style={circle,draw=gray!75!black,fill=gray!45,minimum size=6pt,inner sep=0pt},
    bdynode/.style={circle,draw=orange!85!black,fill=white,minimum size=6pt,inner sep=0pt,line width=0.8pt},
    lab/.style={
        font=\scriptsize,
        fill=white,
        fill opacity=1,
        text opacity=1,
        inner sep=1.2pt,
        outer sep=0pt
    },
    weightlab/.style={
        font=\scriptsize,
        fill=white,
        fill opacity=1,
        text opacity=1,
        inner sep=1.2pt,
        outer sep=0pt
    },
    >=Latex
]


\node at (0,3.3) {\small $(G,B)$};

\node[solidzeronode] (z) at (0,0) {};

\node[posnode] (a) at (0,1.15) {};
\node[posnode] (b) at (-0.9,2.05) {};
\node[posnode] (c) at (0.9,2.05) {};

\node[negnode] (a0) at (0,-1.15) {};
\node[negnode] (c0) at (0.9,-2.05) {};
\node[negnode] (b0) at (-0.9,-2.05) {};

\node[bdynode] (p2) at (-1.85,2.75) {};
\node[bdynode] (p1) at (-1.85,-2.75) {};

\draw[edge] (z) -- (a);
\draw[edge] (z) -- (a0);

\draw[edge] (a) -- (b);
\draw[edge] (b) -- (c);
\draw[edge] (c) -- (a);

\draw[edge] (a0) -- (c0);
\draw[edge] (c0) -- (b0);
\draw[edge] (b0) -- (a0);

\draw[edge] (c) -- (c0);

\draw[edge] (p2) -- (b);
\draw[edge] (p1) -- (b0);

\coordinate (zcc0) at (0.9,0);
\draw[orange!85!black,line width=1pt]
    ($(zcc0)+(-0.07,-0.07)$) -- ($(zcc0)+(0.07,0.07)$);
\draw[orange!85!black,line width=1pt]
    ($(zcc0)+(-0.07,0.07)$) -- ($(zcc0)+(0.07,-0.07)$);

\draw[red!70!black,dashed,rounded corners] (-1.25,0.85) rectangle (1.25,2.30);
\draw[blue!70!black,dashed,rounded corners] (-1.25,-2.30) rectangle (1.25,-0.85);

\node[lab, text=gray!80!black, anchor=east]  
    at ($(z)+(-0.12,0)$) {$z:0$};

\node[lab, text=red!80!black, anchor=west]  
    at ($(a)+(0.12,0)$) {$a:\tau$};

\node[lab, text=red!80!black, anchor=east]  
    at ($(b)+(-0.20,0)$) {$b:\tau$};

\node[lab, text=red!80!black, anchor=west]  
    at ($(c)+(0.12,0)$) {$c:1$};

\node[lab, text=blue!80!black, anchor=east]  
    at ($(a0)+(-0.2,0)$) {$a_0:-\tau$};

\node[lab, text=blue!80!black, anchor=west]  
    at ($(c0)+(0.12,0)$) {$c_0:-1$};

\node[lab, text=blue!80!black, anchor=east]  
    at ($(b0)+(-0.20,0)$) {$b_0:-\tau$};

\node[lab, text=orange!85!black, anchor=east]  
    at ($(p2)+(-0.12,0)$) {$p_2:0$};

\node[lab, text=orange!85!black, anchor=east]  
    at ($(p1)+(-0.12,0)$) {$p_1:0$};

\node[lab, text=orange!85!black, anchor=west]  
    at ($(zcc0)+(0.12,0)$) {$z_{cc_0}:0$};

\draw[->,line width=1pt] (2.05,0) -- (4.00,0);


\node at (5.80,3.3) {\small $(G_U,\partial_D G_U)$};

\node[posnode] (Ua) at (5.75,-0.4) {};
\node[posnode] (Ub) at (4.95,0.7) {};
\node[posnode] (Uc) at (6.55,0.7) {};

\node[bdynode] (Up2)   at (3.95,1.7) {};
\node[bdynode] (Uzt)   at (5.75,-1.8) {};
\node[bdynode] (Uzcc0) at (6.55,-0.35) {};

\draw[edge] (Ua) -- (Ub);
\draw[edge] (Ub) -- (Uc);
\draw[edge] (Uc) -- (Ua);

\draw[edge] (Ub) -- (Up2);
\draw[edge,dashed]  (Ua) -- (Uzt);
\draw[edge,dashed] (Uc) -- (Uzcc0);

\node[weightlab] at (3.75,1.28) {$w=1$};
\node[weightlab] at (5.15,-1.3) {$w=1$};
\node[weightlab] at (7.15,0.12) {$w=2$};

\draw[red!70!black,dashed,rounded corners] (4.10,-1.05) rectangle (7.80,1.18);

\node[lab, text=red!80!black, anchor=east]  
    at ($(Ua)+(-0.12,0)$) {$a:\tau$};

\node[lab, text=red!80!black, anchor=east]  
    at ($(Ub)+(-0.12,0)$) {$b:\tau$};

\node[lab, text=red!80!black, anchor=west]  
    at ($(Uc)+(0.12,0)$) {$c:1$};

\node[lab, text=orange!85!black, anchor=east]  
    at ($(Up2)+(-0.12,0)$) {$p_2:0$};

\node[lab, text=orange!85!black, anchor=north]  
    at ($(Uzt)+(0,-0.12)$) {$\widetilde z:0$};

\node[lab, text=orange!85!black, anchor=west]  
    at ($(Uzcc0)+(0.12,0)$) {$z_{cc_0}:0$};

\node[lab, text=red!70!black] 
    at (7.70,1.6) {$\Omega_U=\{a,b,c\}$};

\node[align=center, font=\small] at (2.80,-3.52) {
$-\Delta_\Omega(G) f=(3-\sqrt3)f$ on $\Omega$
\qquad$\Longrightarrow$\qquad
$\Delta_{\Omega_U}({G_U}) f|_{\Omega_U}
=(3-\sqrt3) f|_{\Omega_U}$ on $\Omega_U$
};

\end{tikzpicture}
\caption{Construction of the nodal domain graph $G_U$ from the positive nodal domain $U$. Here $\tau= \frac{1+\sqrt3}{2}$.}
\label{fig:nodal-domain-dirichlet-subgraph}
\end{figure}

\begin{example}
To better understand Theorem~\ref{thm:dirichlet_nodal_D}, 
we consider the graph $G$ shown in~\cref{fig:nodal-domain-dirichlet-subgraph}.
Let $B=\{p_1,p_2\}$.
The characteristic polynomial of $L_\Omega$ is given by
$P(G,B;x)=(x-4)(x^2-6x+6)(x^4-10x^3+32x^2-34x+6)$.
Then $\lambda_2(G,B)=3-\sqrt3$.
Let $\tau=\frac{1+\sqrt3}{2}$.
With respect to the vertex ordering $(z,a,b,c,a_0,c_0,b_0)$, an eigenfunction
corresponding to $\lambda_2(G,B)$ can be chosen as
$f=(0,\tau,\tau,1,-\tau,-1,-\tau)^\top$.
Now consider the positive nodal domain $U$ with $\Omega_U=\{a,b,c\}$.
Since $f(c)=1$ and $f(c_0)=-1$, the point $z_{cc_0}$ is the midpoint of $cc_0$, and the induced boundary edge weight is
$w_{c z_{cc_0}}=\frac{f(c)-f(c_0)}{f(c)}=2$.
Furthermore, the original boundary vertex in $G_U$ is $p_2$, and the vertex $z$ gives rise to the new boundary vertex $\widetilde z$ in $G_U$.
Thus $B_U=\{p_2\}$, $B_D(G_U)=\{\widetilde z,z_{cc_0}\}$, and
$\partial_DG_U=B_U\cup B_D(G_U)=\{p_2,\widetilde z,z_{cc_0}\}$.
With respect to the ordering $(a,b,c)$ of $\Omega_U$, the Dirichlet Laplacian matrix of $G_U$ is
\begin{align*}
    L_{\Omega_U}(G_U,\partial_DG_U)
    =
    \begin{pmatrix}
    3&-1&-1\\
    -1&3&-1\\
    -1&-1&4
    \end{pmatrix}.
\end{align*}
Its Dirichlet eigenvalues are $3-\sqrt3$, $4$, and $3+\sqrt3$.
Moreover, $ f|_{\Omega_U}=(\tau,\tau,1)^\top$ satisfies
\begin{align*}
    L_{\Omega_U}(G_U,\partial_DG_U)
    \begin{pmatrix}
    \tau\\
    \tau\\
    1
    \end{pmatrix}
    =
    (3-\sqrt3)
    \begin{pmatrix}
    \tau\\
    \tau\\
    1
    \end{pmatrix}.
\end{align*}
Therefore, $-\Delta_{\Omega_U}({G_U})\bigl(f|_{\Omega_U}\bigr)
=(3-\sqrt3) f|_{\Omega_U}$ on $\Omega_U$.
\end{example}

In the proof of the main theorem, we only consider the unweighted case $m\equiv w\equiv1$ on $G$.

\subsection{\texorpdfstring{Trees with fixed $k$ and $b$}{Trees with fixed k and b}}

We consider a family of trees as follows.
For integers $k\geq 2$ and $b\geq 2$, let
 \begin{align*}
        \mathcal{T}_{k,b}
        =
        \left\{
        T:
        T\text{ is a tree with } k \text{ interior vertices and } b \text{ leaves.}
        \right\}.
\end{align*}
For $w>1$, let
\begin{align*}
        \mathcal{T}_{k,b,w}
        =
        \left\{
        T\in \mathcal{T}_{k,b}: 
        \begin{array}{l}
        \text{one boundary edge has weight }w,\\
        \text{and the remaining }b-1\text{ boundary edges have weight }1
        \end{array}
        \right\}.
\end{align*}
For $a>0$ and $s\geq 1$, let $T_s(a)$ be the following tridiagonal matrix
\begin{align*}
T_s(a)=
\begin{pmatrix}
a&-1\\
-1&2&-1\\
&-1&2&\ddots\\
&&\ddots&\ddots&-1\\
&&&-1&2
\end{pmatrix}.
\end{align*}
For simplicity, denote the smallest eigenvalue of $T_s(a)$  by $\sigma_s(a)$.
Note that the matrix $T_s(q)$ is the Dirichlet Laplacian matrix of $C_{s,q}$ when $s \geq 2$. 
We present a lemma about the monotonicity of $\sigma_s(a)$.
\begin{lemma}\label{lem:sigma}
For every $s\geq 1$ and every $0<e<f$,  $\sigma_s(e)<\sigma_s(f)$.
For every $s\geq 1$, $i>0$ and $j\geq 2$, $\sigma_{s+1}(i)<\sigma_s(j)$.
\end{lemma}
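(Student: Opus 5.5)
The two claims are of different kinds, so I would handle them separately. The first is a monotonicity in the corner entry $a$. The second compares matrices of different sizes, and the idea is to embed one into the other as a principal submatrix after lowering a diagonal entry.

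\emph{Monotonicity in $a$.} I would use the Rayleigh quotient together with Perron--Frobenius. Since $T_s(a)$ is symmetric,
\[
\sigma_s(a)=\min_{x\neq 0}\frac{x^\top T_s(a)x}{x^\top x},
\qquad
T_s(f)=T_s(e)+(f-e)E_{11},
\]
where $E_{11}$ is the matrix unit with a single $1$ in position $(1,1)$. Then $\sigma_s(e)\le\sigma_s(f)$ follows at once.

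For strictness, write $-T_s(a)=B-cI$ with $B\ge 0$ (for a large constant $c$). This is an irreducible Metzler matrix, since the path is connected. By Lemma~\ref{lem:Metzler matrix_PF_theorem}, the minimizing eigenvector $x$ of $T_s(f)$ can be taken entrywise positive, so $x_1>0$. Therefore
\[
\sigma_s(e)\le \frac{x^\top T_s(e)x}{x^\top x}=\sigma_s(f)-(f-e)\frac{x_1^2}{x^\top x}<\sigma_s(f).
\]
The case $s=1$ is trivial, since $\sigma_1(a)=a$.

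\emph{Comparison of sizes.} By the first part, $\sigma_{s+1}(i)<\sigma_{s+1}(j')$ whenever $i<j'$, so it suffices to treat values of $i$ at least as large as any fixed admissible value. The key observation is that the matrix $T_{s+1}(2)$ contains $T_s(2)$ as its trailing principal submatrix.

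More generally, $T_{s+1}(i)$ contains $T_s(2)$ as the principal submatrix on indices $2,\dots,s+1$, because for $s\ge1$ the diagonal entry at index $2$ equals $2$. For $s=1$ this submatrix is just $(2)=T_1(2)$.

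\begin{enumerate}[(a)]
\item By Lemma~\ref{cor:spectral_radius_monotonicity} applied to smallest eigenvalues, $\sigma_{s+1}(i)\le\sigma_s(2)$.
\item For strictness, take the positive eigenvector $y$ of $T_s(2)$ for $\sigma_s(2)$ and pad it with a zero in the first coordinate, giving $\tilde y$. This $\tilde y$ has the same Rayleigh quotient $\sigma_s(2)$ in $T_{s+1}(i)$.
\item Since $\tilde y$ has a zero entry, it cannot be the Perron (positive) eigenvector of the irreducible matrix $T_{s+1}(i)$. Hence $\tilde y$ is not a minimizer, and $\sigma_{s+1}(i)<\sigma_s(2)$.
\item Finally, the first part gives $\sigma_s(2)\le\sigma_s(j)$ for $j\ge 2$.
\end{enumerate}
Chaining (c) and (d) yields $\sigma_{s+1}(i)<\sigma_s(j)$.

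The main point needing care is strictness in the second claim. A minimizer of the Rayleigh quotient must be an eigenvector for the smallest eigenvalue, and by simplicity and positivity from Lemma~\ref{lem:Metzler matrix_PF_theorem} that eigenvector has no zero entries. This is where irreducibility of the tridiagonal matrix is used. Nothing in the argument depends on the size of $i$, since $i$ appears only in the discarded first row and column.
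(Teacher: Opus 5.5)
Your proof is correct and follows the paper's argument. For strict monotonicity in the corner entry, you use the Rayleigh quotient at the positive Perron eigenvector of $T_s(f)$. For the size comparison, you pad the positive eigenvector of $T_s(2)$ with a zero, note that its Rayleigh quotient in $T_{s+1}(i)$ equals $\sigma_s(2)$, and then chain with $\sigma_s(2)\le\sigma_s(j)$.

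Your explicit justification of strictness (a vector with a zero entry cannot be the simple, entrywise-positive minimizer) fills in a step the paper leaves implicit. The interlacing step (a) and the remark about reducing to large $i$ are harmless but unnecessary.
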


\begin{proof}
We first prove $\sigma_s(e)<\sigma_s(f)$. Let $s\geq 1$ and $0<e<f$.
By Lemma \ref{lem:Metzler matrix_PF_theorem},  $\sigma_s(f)$ is simple and has a positive unit eigenvector $\mathbf{x}=(x_1,\ldots,x_s)^\top$.
Then 
\begin{equation}
\begin{aligned}\label{eq:T_1}
\sigma_s(f)
&=\mathbf{x}^\top T_s(f)\mathbf{x}=\mathbf{x}^\top T_s(e)\mathbf{x}+(f-e)x_1^2\\
&\geq \sigma_s(e)+(f-e)x_1^2 >\sigma_s(e).
\end{aligned}
\end{equation}

Next we prove $\sigma_{s+1}(i)<\sigma_s(j)$.
Denote the  positive unit eigenvector of $\sigma_s(2)$ by $\mathbf{y}=(y_1,\cdots,y_s)^\top$.
Let $\mathbf{z}^\top=(0,\mathbf{y}^\top)$.
Then
\begin{align*}
\sigma_{s+1}(i)<\mathbf{z}^\top T_{s+1}(i)\mathbf{z} =\mathbf{y}^\top T_s(2)\mathbf{y}=\sigma_s(2).
\end{align*}
Combining this with~\eqref{eq:T_1}, we have $\sigma_{s+1}(i)<\sigma_s(2)\leq \sigma_s(j)$.
\end{proof}

Before proving the main theorem, we need a weighted version of Faber--Krahn inequality for the first Dirichlet eigenvalue of trees.
\begin{theorem}[\cite{biyikouglu2007faber}, Klobučarštel theorem]\label{Klobučarštel theorem}
    A tree $T$ has the Faber--Krahn property in the class $\mathcal{T}_{k,b}$ if and only if $T$ is a star with a long tail, i.e., a comet. 
    $T$ is then uniquely determined up to isomorphism.
\end{theorem}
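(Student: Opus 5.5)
The theorem is quoted from \cite{biyikouglu2007faber}. If I had to prove it myself, I would use a rearrangement argument on the first Dirichlet eigenfunction with the Rayleigh quotient $R_{(T,B)}$. Here the comet means the tree $C_{k,b}$ defined in the introduction: an interior path $v_1\cdots v_k$ with $b-1$ leaves at $v_1$ and one leaf at $v_k$. For $k\geq 2$ its Dirichlet matrix is $T_k(b)$.

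\textbf{Setup.} Fix a minimizer $T\in\mathcal T_{k,b}$; one exists because the class is finite. Let $f$ be its first Dirichlet eigenfunction, which by Proposition~\ref{pro:Dirichlet eigenvalue_properties} is positive on $\Omega$.

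\textbf{Branch-moving lemma.} The main tool is the following. Let $u,v\in\Omega$ with $f(u)\geq f(v)$, and let $S$ be a branch at $v$ not containing $u$. Detach $S$ from $v$ and attach it at $u$, or move it the other way, choosing whichever direction leaves the class $\mathcal T_{k,b}$ invariant.
\begin{itemize}
\item For the single edge from $v$ into $S$, the numerator contribution changes from $(f(v)-f(s))^2$ to $(f(u)-f(s))^2$, where $s$ is the root of $S$. The denominator is unchanged.
\item For a boundary leaf the contribution is simply $f(\cdot)^2$ at its attachment vertex.
\end{itemize}
Moving a boundary leaf from a vertex with larger $f$ to one with smaller $f$ therefore does not increase $R(f)$. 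The inequality $\lambda_1\leq R(f)$ then shows $\lambda_1$ does not increase.

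\textbf{Strictness.} Equality $\lambda_1(T')=R(f)$ would force $f$ to be an eigenfunction of the new tree $T'$. That fails, because the eigen-equation at the vertex that lost (or gained) a neighbour changes by a nonzero term, since $f>0$ there. So each such move strictly decreases $\lambda_1$.

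\textbf{Step 1: the interior tree is a path.} Suppose the interior tree $T[\Omega]$ has a vertex $w$ of interior degree at least $3$. Take two interior branches at $w$. By comparing the values of $f$ at suitable vertices, I would relocate one whole branch to hang at an endpoint of the other, keeping $k$ and $b$ fixed. This is the step I expect to be the \textbf{main obstacle}. Branch moves must preserve both the count of interior vertices and the count of leaves. A vertex that becomes a leaf of $T[\Omega]$ must still carry a boundary leaf, so bookkeeping is needed for which boundary leaves travel with the branch. The comparison of $f$-values also has to be set up so the moved edge's contribution does not increase. I would try the classical trick: take the vertex $w$ of maximum $f$ among the branching points, and move the branch whose root has the smaller $f$-value onto a pendant interior vertex $p$ of the other branch. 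This requires checking $f(p)\leq f(w)$, which should follow from superharmonicity: $-\Delta f=\lambda_1 f>0$ on $\Omega$, so $f$ is strictly larger than the average of its neighbours and cannot have interior minima along paths.

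\textbf{Step 2: placement of the leaves.} Now $T[\Omega]$ is a path $v_1\cdots v_k$. Each end $v_1,v_k$ must carry at least one boundary leaf, and the remaining $b-2$ leaves sit somewhere on the path. By the same superharmonicity, $f$ restricted to the path is strictly concave-like: $2f(v_i)-f(v_{i-1})-f(v_{i+1})\geq\lambda_1 f(v_i)>0$ at vertices with no leaves. More generally, $f(v_i)$ exceeds the average of its neighbours. Hence the minimum of $f$ over $\Omega$ is attained only at an end, say $v_1$. Any extra leaf not at $v_1$ can then be moved to $v_1$ with a strict decrease of $\lambda_1$. So in the minimizer all $b-1$ non-forced leaves sit at one end, and the tree is the comet $C_{k,b}$.

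\textbf{Step 3: uniqueness.} Strictness at every move shows that any tree other than the comet is not a minimizer. The comet is unique up to the reflection of the path, which gives uniqueness up to isomorphism. The converse direction, that the comet has the Faber--Krahn property, follows because a minimizer exists and must be the comet.
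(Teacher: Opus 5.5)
The paper gives no proof of this statement; it is imported from \cite{biyikouglu2007faber}. So your argument has to stand on its own, and as written it does not.

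The leaf move is sound. Suppose you shift a boundary leaf from $u$ to $x$, where $f(x)\le f(u)$ and $u$ stays interior. The numerator changes by $f(x)^2-f(u)^2\le 0$. Equality $\lambda_1(T')=\lambda_1(T)$ is impossible, because the eigen-equation at $u$ changes by $f(u)\neq 0$. Hence in a minimizer every non-forced boundary leaf sits at a minimum point $x$ of $f$. Both structural steps beyond this have genuine gaps.

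\textbf{Step 2 rests on a false claim.} You assert that the minimum of $f$ over $\Omega$ is attained only at an end of the path. Superharmonicity does not give this. The equation $-\Delta f=\lambda_1 f$ only says that $f(x)$ exceeds the average of $f$ over all $\deg_T(x)$ neighbours, with boundary leaves counted as $0$. A path vertex carrying boundary leaves can therefore lie strictly below both of its path neighbours.

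Here is an example in $\mathcal T_{3,4}$. Take the path $v_1v_2v_3$ with one leaf at $v_1$, two at $v_2$ and one at $v_3$. The Dirichlet matrix is $\begin{pmatrix}2&-1&0\\-1&4&-1\\0&-1&2\end{pmatrix}$, with $\lambda_1=3-\sqrt3$ and positive eigenvector $(1,\sqrt3-1,1)$. The minimum of $f$ is at the middle vertex.

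So suppose that in a minimizer the minimum point is $x=v_j$ with $1<j<k$. Then the configuration with one leaf at each end and $b-2$ leaves at $v_j$ survives every leaf move, since the end leaves cannot be removed without changing $k$. To exclude it you need a different move. For example, delete $v_jv_{j+1}$ and add $v_1v_{j+1}$, so that $v_j$ becomes an end. Its cost is $(f(v_1)-f(v_{j+1}))^2-(f(v_j)-f(v_{j+1}))^2$, which is $\le 0$ if $f(v_1)\le f(v_{j+1})$. The mirror move works if $f(v_k)\le f(v_{j-1})$. You must then show that one of these two inequalities holds. It does: use the discrete-sine form $f(v_i)=A\sin(i\theta)$ and $f(v_{k+1-i})=B\sin(i\theta)$ on the two leaf-free segments, together with $j\theta<\pi$ and $(k-j+1)\theta<\pi$.

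\textbf{Step 1 is only a plan, and the plan does not control the sign.} Moving a branch with root $s$ from $w$ to $p$ changes the numerator by
$(f(p)-f(s))^2-(f(w)-f(s))^2=(f(p)-f(w))(f(p)+f(w)-2f(s))$.
What you need is $|f(p)-f(s)|\le |f(w)-f(s)|$, for instance $f(p)$ lying between $f(s)$ and $f(w)$. The condition $f(p)\le f(w)$ alone is not sufficient. Nothing in your sketch places $f(s)$ relative to $f(p)$, and the tool you invoke for such comparisons is the same false ``no interior minima along paths'' principle.

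Your strictness argument also covers leaf moves but not branch moves. For a branch move, the eigen-equations at $w$ and $p$ change by $f(w)-f(s)$ and $f(p)-f(s)$, not by $f(w)$. So $\lambda_1(T')=\lambda_1(T)$ is excluded only after ruling out the degenerate case $f(w)=f(s)=f(p)$, and you need this for uniqueness.

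As it stands, the proposal only shows that, for a fixed interior tree, the non-forced leaves gather at a minimum point of $f$. The reduction of the interior tree to a path, and the placement of the extra leaves at an end, both remain to be proved.
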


\begin{theorem}\label{Klobučarštel theorem_weighted}
A tree $T$ has the Faber--Krahn property in the class $\mathcal{T}(k,b,w)$ if and only if $T\cong C_{k,b}^{w}$.
\end{theorem}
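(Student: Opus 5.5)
Write $\mathcal{T}(k,b,w)$ for $\mathcal{T}_{k,b,w}$. We want the minimiser of $\lambda_1$ over trees with $k$ interior vertices, $b$ leaves, and exactly one boundary edge of weight $w>1$. The plan is to adapt the argument behind Theorem~\ref{Klobučarštel theorem}: first restrict to the weighted comet shapes by rearrangement and Perron-vector arguments, then optimise over those shapes.

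\textbf{Step 1: reduction to a caterpillar with leaves at one end.} Let $T\in\mathcal{T}(k,b,w)$ be a minimiser and let $f>0$ be its Perron eigenvector on $\Omega$ (Proposition~\ref{pro:Dirichlet eigenvalue_properties}). We use the standard rearrangement lemma for trees. If $u,v$ are interior vertices with $f(u)\ge f(v)$, and $v$ carries a pendant leaf $\ell$ (of weight $1$ or $w$), then moving $\ell$ to $u$ does not increase the Rayleigh quotient. Indeed, a boundary edge $x\ell$ contributes $w_{x\ell}f(x)^2$ to the numerator, and $f(v)^2\ge f(u)^2$ would be required for an increase, which fails. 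We must also check that the number of interior vertices is preserved. Moving a leaf could turn $v$ into a leaf; if so, we instead move an interior subtree hanging at $v$ and argue as in Bıyıkoğlu--Leydold. Strictness comes from the fact that $f$ would fail the eigen-equation at $u$ or $v$ after the move, unless the configuration is already optimal.

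Repeating these moves shows that all leaves hang on a set of interior vertices that forms a path. Moving interior branches toward vertices of larger $f$ then forces the interior tree to be a path $v_1\cdots v_k$ in which all leaves except one sit at the vertex of maximal $f$. The remaining leaf must sit at the other end, since an endpoint of the interior path has to carry at least one leaf. This yields a tree of the form $C_{k,b}$ with the weight-$w$ edge placed somewhere.

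\textbf{Step 2: placing the weighted edge.} Two positions are possible. Either the heavy edge is one of the $b-1$ edges at $v_1$, giving $C_{k,b}^{w}$ with Dirichlet matrix $T_k(b-1+w)$ (reversed orientation), or it is the single edge at $v_k$, giving a matrix whose diagonal has $b-1$ at one end and $1+w$ at the other. Instead of computing, we use the heavy-edge argument: a boundary edge $x\ell$ of weight $w$ adds $(w-1)f(x)^2$ beyond the unweighted case. So the Rayleigh quotient is smaller when the heavy edge sits at the vertex where $f$ is smallest. The intuition is that the vertex with many leaves has small $f$. To make this precise, compare the two matrices directly: $\mathrm{diag}(b-1+w,2,\dots,2,2)$ versus $\mathrm{diag}(b-1,2,\dots,2,1+w)$.

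We would use a test vector on the end-weighted configuration, or equivalently show via the eigenvector $x$ that $x_{v_1}<x_{v_k}$ in the unweighted comet when $b-1\ge 1$. This combines the first part of Lemma~\ref{lem:sigma} with a reflection argument, where the diagonal entry $b-1$ versus $1$ controls the ordering. The case $b=2$ is symmetric and both placements coincide.

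\textbf{Main obstacle.} The hard part is strictness and uniqueness in Step 1 when a rearrangement would destroy an interior vertex. We also need a careful treatment of ties $f(u)=f(v)$, where equality in the Rayleigh quotient forces the moved configuration to share the eigenvector. In that situation the eigen-equation at $u$ changes by $f(u)>0$ times the moved weight, which gives a contradiction. This is the step we would write out in full.
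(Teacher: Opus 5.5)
Your overall plan is the same as the paper's: reduce to comet shapes, then fix the position of the heavy edge by a swap. However, two steps fail as written.

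\textbf{Step 1 has the wrong sign.} Moving a boundary edge of weight $c$ from $v$ to $u$ changes the numerator of $R(f)$ by $c\,(f(u)^2-f(v)^2)$. This is $\ge 0$ when $f(u)\ge f(v)$, so leaves must be moved to vertices of \emph{smaller} $f$. In the resulting comet the $b-1$ leaves therefore sit where $f$ is small, not at the maximum, as your own Step 2 intuition says. This correct direction is exactly what Step 2 needs.

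\textbf{Step 2 has a genuine gap.} Let $M_1,M_2$ be the Dirichlet matrices with the heavy edge at $v_1$ and at $v_k$ respectively. Knowing $x_{v_1}<x_{v_k}$ for the eigenvector $x$ of the \emph{unweighted} comet only gives $R_{M_1}(x)<R_{M_2}(x)$. These are two upper bounds and do not compare $\lambda_1(M_1)$ with $\lambda_1(M_2)$; this is a first-order argument, valid only for $w$ near $1$. The paper instead uses the eigenvector $f$ of the tree $T$ with the heavy edge at $v_k$ as a test function for the swapped tree $T'$:
\[
\lambda_1(T')\le R_{T'}(f)=\lambda_1(T)+(w-1)\bigl(f(v_1)^2-f(v_k)^2\bigr).
\]
This requires $f(v_1)<f(v_k)$ for \emph{that} $f$, which fails in general. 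The end diagonal entries of $M_2$ are $b$ and $1+w$ (not $b-1$ and $1+w$). For example, $k=2$, $b=3$, $w=10$ gives $M_2=\begin{pmatrix}3&-1\\-1&11\end{pmatrix}$, whose ground state has $f(v_2)\approx 0.12\,f(v_1)$. Here the swap increases $R(f)$, even though $C^{w}_{2,3}$ really is better.

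\textbf{The missing step is a case split.} Assume $b\ge 3$; for $b=2$ the two placements coincide.
\begin{itemize}
\item If $f(v_k)\le f(v_1)$, move the $b-2$ unit-weight leaves from $v_1$ to $v_k$, using Step 1 in the correct direction. If $f(v_k)<f(v_1)$ this strictly lowers $R(f)$. If $f(v_k)=f(v_1)$ the decrease of $\lambda_1$ is still strict, because the eigen-equation at $v_k$ changes by $(b-2)f(v_k)\neq 0$. The result is a copy of $C^{w}_{k,b}$.
\item If $f(v_1)<f(v_k)$, perform the swap above.
\end{itemize}
This is what the paper's assertion ``the extremal comet satisfies $f(v_k)>f(v_1)$'' encodes.

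Alternatively, the smallest eigenvalue of the tridiagonal matrix with end entries $a$ and $S-a$ is concave in $a$, being a minimum of affine functions. It is also symmetric under reflection, so it is minimised at the most unbalanced split, which is $C^{w}_{k,b}$. This route needs a separate strictness check.
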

\begin{proof}
The proof is similar to that of Theorem~\ref{Klobučarštel theorem} and shows that the extremal tree is of the form of a comet $C_{k,b}$ with $f(v_k)>f(v_1)$, where $f$ is a positive unit eigenfunction corresponding to $\lambda_1(T)$. 
It remains to determine where the unique boundary edge of weight $w$ is placed.
We claim that this edge must be incident to $v_1$.
Suppose not, then the boundary edge of weight $w$ is incident to $v_k$.
Denote this edge by $v_0v_k$. 

Construct a new weighted tree $T'$ by replacing the two boundary edges $v_0v_k$ and $v_1v_{k+1}$ with $v_0v_1$ and $v_kv_{k+1}$, respectively,
where $v_0v_1$ has weight $w$ and $v_kv_{k+1}$ has weight $1$.
Then $T'\in \mathcal{T}_{k,b,w}$, and
\begin{align*}
        R_{T'}(f)-R_T(f)
        &=
        \left(f^2(v_k)+w f^2(v_1)\right)
        -
        \left(w f^2(v_k)+f^2(v_1)\right)  \\
        &=
        (w-1)\left(f^2(v_1)-f^2(v_k)\right)
        <0 .
\end{align*}
Therefore $\lambda_1(T')\leq R_{T'}(f)<R_T(f)=\lambda_1(T)$, which
contradicts the Faber--Krahn property of $T$.
Hence $T\cong C_{k,b}^{w}$.
\end{proof}

\begin{proof}[\bf{Proof of Theorem \ref{cor:tree_fixed_interior_leaves_lambda2}}]

Let $B$ be the set of leaves.
Let $f$ be an eigenfunction
corresponding to $\lambda_2(T)$. 
Let
$U_1,\cdots,U_\ell$ be the nodal domains of $f$, and let $k_h=|\Omega_{U_h}|$ and $b_h=|\partial_DG_{U_h}|$.
Since $f$ changes sign, we have
$\ell\geq 2$
Let $w_h\geq 1$ be the weight of new boundary edge incident to $\Omega_{U_h}$.
By Theorem~\ref{thm:dirichlet_nodal_D}, for every $h=1,\cdots,\ell$,
\begin{align}\label{eq:nodal_lambda2_all}
\lambda_2(T)=\lambda_1(G_{U_h},\partial_DG_{U_h}).
\end{align}
Since each nodal domain has at least two Dirichlet boundary vertices,  Lemma~\ref{lem:sigma} and Theorem~\ref{Klobučarštel theorem} give
\begin{align}\label{eq:basic_nodal_bound}
    \lambda_1(G_{U_h},\partial_DG_{U_h})\geq \sigma_{k_h}(b_h+w_h-1)\geq \sigma_{k_h}(b_h)\geq \sigma_{k_h}(2),
    \qquad h=1,\ldots,\ell .
\end{align}
\begin{case}\label{Case1}
    $k=2t+1$.

Since $\sum_{h=1}^{\ell}k_h\leq k=2t+1$, 
there exists a nodal domain $U_i$ such that $k_i\leq t$. 
By lemma \ref{lem:sigma}, \eqref{eq:nodal_lambda2_all}, and \eqref{eq:basic_nodal_bound}, we have $\lambda_2(T)\geq \sigma_t(2)$.

Now we characterize the condition that the equality holds.
Assume that equality $\lambda_2(T)=\sigma_t(2).$ 
 Then $k_i=t$, $b_i= 2$, $w_i=1$, and $G_{U_i}\cong C_{t,2}$ by~\eqref{eq:basic_nodal_bound}.  
Denote the unique Dirichlet boundary vertex in $B_D(G_{U_i})$ by $z$.  
$w_i=1$ implies that $z\in V(T)$.
For another nodal domain $U_j$, 
by \eqref{eq:nodal_lambda2_all} and
\eqref{eq:basic_nodal_bound}, we have
\begin{align*}
    \sigma_t(2)
    =
    \lambda_2(T)
    =
    \lambda_1(G_{U_j},\partial_DG_{U_j})
    \geq
    \sigma_{k_j}(2).
\end{align*}
By Lemma~\ref{lem:sigma}, $k_j\geq t$.
Therefore $k_i=k_j=t$ and $z$ is the unique zero point of $f$.
Applying the equality case of \eqref{eq:basic_nodal_bound} to $U_j$, we
also get $b_j=2$, $w_j=1$ and $G_{U_j}\cong C_{t,2}$.  
Therefore  two nodal domains are two copies of $C_{t,2}$, and they are linked by $z$.  
Each copy  contains exactly one leaf of $T$, and hence the remaining $b-2$ leaves cannot belong to either nodal domain.  
Therefore they must be attached to $z$.

Conversely, suppose that $T$ is obtained in this way.
Let $C_{t,2}^{+}$ and $C_{t,2}^{-}$ be the two copies of $C_{t,2}$ in $T$. 
Let $g$ be a $\sigma_t(2)$-eigenfunction of $C_{t,2}$. 
Define a test function $h$ on the whole $T$ by
\begin{align*}
    h(v)=
\begin{cases}
g(v), & v\in V(C_{t,2}^{+})\setminus\{z\},\\[2mm]
-g(v), & v\in V(C_{t,2}^{-})\setminus\{z\},\\[2mm]
0, & \text{otherwise}.
\end{cases}
\end{align*}
Then 
\begin{align*}
    \lambda_2(T)&\leq
    \frac{\sum\limits_{xy\in E(C_{t,2}^{+})}(h(x)-h(y))^2+\sum\limits_{xy\in E(C_{t,2}^{-})}(h(x)-h(y))^2}
         {\sum\limits_{x\in \Omega(C_{t,2}^{+})}h(x)^2+\sum\limits_{x\in \Omega(C_{t,2}^{-})}h(x)^2}\\
    &=
    \frac{\sum\limits_{xy\in E(C_{t,2})}(g(x)-g(y))^2}
         {\sum\limits_{x\in \Omega(C_{t,2})}g(x)^2}=
    \sigma_t(2).
\end{align*}
Together with the lower bound already proved, we obtain $\lambda_2(T)=\sigma_t(2)$.
\end{case}

\begin{case}
 $k=2t$. 

We first show that, for an extremal tree, the number of nodal domains must be exactly two, and both of them have $t$ interior vertices.
Indeed, otherwise there exists a nodal domain $U_i$ such that $k_i\leq t-1$.  
By \eqref{eq:nodal_lambda2_all},
\eqref{eq:basic_nodal_bound}, and Lemma~\ref{lem:sigma}, we have
\begin{align*}
    \lambda_2(T)
    \geq
    \sigma_{k_i}(2)
    \geq
    \sigma_{t-1}(2) >\sigma_{t}(a),
\end{align*}
where $a$ is the total edge weight incident with the interior vertex $v_1$ of the corresponding weighted comet. 
It follows that $T$ cannot be the extremal tree.  
Thus, an extremal
tree must have exactly two nodal domains, denoted by $U_1$ and $U_2$, with $k_1=k_2=t$. 
Hence the two nodal domains are linked by a unique sign-changing edge.  
Denote this edge by $uv$, where $u\in \Omega_{U_1}$, $v\in \Omega_{U_2}$, and
$f(u)>0>f(v)$.  
It follows that the zero point $z_{uv}$ satisfies
\begin{align*}
    l_{uz_{uv}}=
    \frac{| f(u)|}{| f(u)- f(v)|}l_{uv},
\quad
     l_{vz_{uv}}=
    \frac{| f(v)|}{| f(u)- f(v)|}l_{uv},
    \quad
    w_{uz_{uv}}=\frac{1}{l_{uz_{uv}}},
    \quad
     w_{vz_{uv}}=\frac{1}{l_{vz_{uv}}}.
\end{align*}
For simplicity, write $ w_1=w_{uz_{uv}}$ and $ w_2=w_{vz_{uv}}$.
Then
\begin{align}\label{eq:w_relation}
    \frac1{w_1}+\frac1{w_2}=1.
\end{align}
In particular, $w_1>1$ and $w_2>1$.

Let $p_i= |B_{U_i}|$. 
Then $p_1+p_2=b$.  
For $G_{U_i}$, the Dirichlet boundary consists of  $p_i$ leaves in $T$ and  $z_{uv}$.   
Hence, by \eqref{eq:nodal_lambda2_all} and \eqref{eq:basic_nodal_bound}, we have
\begin{align}\label{eq:even_weighted_bound_correct}
    \lambda_2(T)
    \geq
    \max\left\{
        \sigma_t(p_1+w_1),
        \sigma_t(p_2+w_2)
    \right\}.
\end{align}
Since $\sigma_t(a)$ is strictly increasing in $a$, it remains to minimize $\max\{p_1+w_1,\ p_2+w_2\}$ subject to the condition~\eqref{eq:w_relation}.

\begin{subcase}
  $b=2r$.

By~\eqref{eq:w_relation} and Cauchy--Schwarz inequality, we have $w_1+w_2\geq 4$, and the  equality holds if and only if $w_1=w_2=2$.
Therefore
\begin{align*}
    \max\{p_1+w_1,\ p_2+w_2\}
    \geq
    \frac{p_1+p_2+w_1+w_2}{2}
    \geq
    r+2 .
\end{align*}
It follows that 
   $ \lambda_2(T)\geq \sigma_t(r+2).$
Now suppose equality holds.  
Then we must have $w_1=w_2=2$ and $p_1=p_2=r$.
Moreover, Theorem~\ref{Klobučarštel theorem_weighted} implies that both nodal domains are weighted comets.  
Since each nodal domain contains $r$ leaves in $T$ and one new boundary edge of weight $2$,  both of $G_{U_1}-z_{uv}$ and $G_{U_2}-z_{uv}$ are a copy of $C_{t,r}$.  
The sign-changing edge must be attached at the edges with weight $2$ of two comets.  
Therefore $T$ is obtained from two copies of $C_{t,r}$ by joining $v_1$ in one $C_{t,r}$ and $v_1'$ in the other $C_{t,r}$. 

Conversely, the proof is similar to Case~\ref{Case1}.
\end{subcase}

\begin{subcase}
$b=2r+1$. 

WLOG, assume $p_1\leq p_2$.
Let $d=p_2-p_1$.
Then $d$ is a positive odd integer.  We claim that
\begin{align}\label{eq:odd_balance_bound}
    \max\{p_1+w_1,\ p_2+w_2\}
    \geq
    r+1+\frac{1+\sqrt5}{2}.
\end{align}
Indeed, if $d\geq 3$, then $p_2\geq r+2$.
Since $w_2>1$, we have
\begin{align*}
    \max\{p_1+w_1,\ p_2+w_2\}
    \geq
    p_2+w_2
    >
    r+3
    >
    r+1+\frac{1+\sqrt5}{2}.
\end{align*}
It remains to consider $d=1$.  
Then $p_1=r$ and $p_2=r+1$.
If $\max\{r+w_1,\ r+1+w_2\}$ is minimal, then $r+w_1=r+1+w_2$.
By \eqref{eq:w_relation}, we get $w_2=\frac{1+\sqrt5}{2}$, $w_1=\frac{3+\sqrt5}{2}$.
Therefore \eqref{eq:odd_balance_bound} holds.
Hence, by
\eqref{eq:even_weighted_bound_correct},
\begin{align*}
    \lambda_2(T)
    \geq
    \sigma_t\left(r+1+\frac{1+\sqrt5}{2}\right).
\end{align*}
Now suppose equality holds.  
Then $d=1$, and moreover, 
\begin{align*}
    p_1=r,\qquad p_2=r+1,\qquad
    w_1=\frac{3+\sqrt5}{2},\qquad
    w_2=\frac{1+\sqrt5}{2}.
\end{align*}
Theorem~\ref{Klobučarštel theorem_weighted} implies that both nodal domains are weighted comets.  
It follows that $G_{U_1}-z_{uv}\cong C_{t,r}$ and $G_{U_2}-z_{uv}\cong C_{t,r+1}$.
The sign-changing edge must be attached at the two  edges with weight $w_1$ and $w_2$ of two comets.  
Therefore $T$ is obtained from one copy of $C_{t,r}$ and one copy of
$C_{t,r+1}$ by joining $v_1$ in $C_{t,r}$ and $v_1'$ in  $C_{t,r+1}$. 

Conversely, the proof is similar to Case~\ref{Case1}.
\end{subcase}
\end{case}

 The proof is complete.
\end{proof}

\section{The second largest adjacency eigenvalue}

\subsection{Dirichlet adjacency operator}
Let $G$ be a connected graph of order $n$ and $A(G)$ the adjacency matrix of $G$.
Let $\rho_1 \geq \rho_2 \geq \cdots \geq \rho_n$ be the eigenvalues of $A(G)$.

For a weighted graph with boundary $(G,B,1,w)$,
let $\mathcal{A}_\Omega$ be the Dirichlet (Schr\"odinger type) adjacency operator on $\Omega=V(G)\setminus B$, where 
\begin{align*}
    \mathcal{A}_\Omega f(x)=
   \sum_{\substack{y\in \Omega\\ y\sim x}} w_{xy}f(y)-\left(\sum_{\substack{y\in B\\ y\sim x}} w_{xy}
    \right)f(x)
\end{align*}
for any $x\in \Omega$.
When $\Omega=V(G)$ and $w\equiv1$, this operator coincides with the standard adjacency operator (matrix).
We call $\mu$ a Dirichlet adjacency eigenvalue and $f$ the corresponding eigenfunction.
Define a matrix  $(a_{xy})_{\Omega \times \Omega}$, where
\begin{align*}
    a_{xy} = 
    \begin{cases}
        -\sum\limits_{\substack{z\in B\\ z\sim x}} w_{xz}, & \textif x = y, \\
        w_{xy}, & \textif x \sim y, \\
        0, &  \textotherwise.
    \end{cases}
\end{align*}
 It is clear that this matrix corresponds to $\mathcal{A}_\Omega$.
 We also use this notation to denote the corresponding matrix.
We order the eigenvalues of $\mathcal{A}_\Omega$ as
        $\mu_{1}\geq \mu_{2}\geq \cdots \geq \mu_{|\Omega|}$.
In this paper, we only consider graphs whose interior edge weights are all equal to $1$.

For an induced subgraph $G[\Omega]$ of $G$, we write $A(G[\Omega])$ for the principal submatrix of $A(G)$ indexed by $\Omega$. 
Let $\mathcal{A}_\Omega =A(G[\Omega])-W_{\Omega}$, where $W_{\Omega}$ be the diagonal matrix defined by
\begin{align*}
        W_{xx}=
    \sum_{\substack{y\in B\\ y\sim x}} w_{xy},
    \qquad x\in \Omega.
\end{align*}
When $\Omega = V(G)$, $\mathcal{A}_G=A(G)$.

Next, we introduce  notation for geometric realization  and  the edge weights in the adjacency setting.
By default,  vertex weight is identically equal to 1.
\begin{definition}\label{def:A_U}
Let $G=(V,E,1,w)$ be a finite weighted graph, and $K(G)$ be the geometric realization of $G$. 
For any connected open subset $U$ of $|K(G)|$, we define the induced graph $G_U$ as the following weighted graph with Dirichlet boundary.
\begin{enumerate}
    \item
    $V(G_U)=\{x\in \overline U:\ x\in V(G)\text{ or }x\in \partial U\}$;

    \item
    $E(G_U)=\{xy:\ x\neq y\in V(G_U),\ x,y\text{ lie on the same edge of }
    K(G),\text{ and }(xy)\subset U\}$;

    \item
    $B_D(G_U)=\partial U$;

    \item
    For any $xy \in E(G_U)$,
    \begin{align*}
        w_{xy}^U
        =
        \begin{cases}
        w_{xy}=\dfrac{1}{l_{xy}},
        & xy\in E(G)\cap E(G_U),\\[0.8ex]
        \dfrac{1}{l_{xy}}-\dfrac{1}{l_{uv}},
        & xy\in E(G_U)\setminus E(G),
        \end{cases}
    \end{align*}
    For simplicity, we usually omit the superscript $U$ of $w_{xy}^U$ when no confusion may arise.

\end{enumerate}
\end{definition}

Let $A(G)f=\rho f$, and let $\widetilde f$ be the edgewise linear extension
of $f$ on $K(G)$.
Let $U$ be a nodal domain of $f$; that is, a connected
component of $K(G)\setminus \widetilde f^{-1}(0)$. 
Let $\Omega_U=U\cap V(G)$ be the interior of $G_U$.
For each edge $uv\in E(G)$ with $u\in\Omega_U$, $v\in V(G)\setminus\Omega_U$
and $f(v)=0$, let $\tilde v$ be a copy of $v$. 
We regard $\tilde v$ as a
boundary vertex of $G_U$ generated by $\partial U$, distinct from the original vertex $v$ of $G$.
Hence, $w^U_{u,\tilde v}=0$.
For graph $G_U$ with Dirichlet boundary, its interior is $\Omega_U$, and its Dirichlet boundary can be partitioned into two parts as
$ B_D(G_U)=B_U^0\cup \widetilde B_U^0,$
where
\begin{align*}
    B_U^0
    &=
    \{\tilde v:\ u\in\Omega_U,\ v\in V(G)\setminus\Omega_U,\ uv\in E(G),
    \ f(v)=0\},
    \\
    \widetilde B_U^0
    &=
    \{z_{uv}:\ u\in\Omega_U,\ v\in V(G)\setminus\Omega_U,\ uv\in E(G),
    \ f(v)\neq 0,\ \widetilde f(z_{uv})=0\}.
\end{align*}
Here $z_{uv}$ denotes the zero point of $\widetilde f$ on the edge $uv$ if $f(u)$ and $f(v)$ have distinct signs.


As a direct application of Lemma~\ref{lem:Metzler matrix_PF_theorem}, we list some properties of $\mu_1(G_U)$.
\begin{corollary}\label{cor:A-W_properties}  
Let $G$ be a connected graph, and let $f$ be an
adjacency eigenfunction of $G$ corresponding to the eigenvalue $\rho$, which
is not the spectral radius of $G$. 
For each nodal domain $U$ of $f$, the following statements hold.  
\begin{enumerate}        
\item[(a)] The eigenvalue $\mu_1(G_U)$ is a simple eigenvalue;        
\item[(b)] there exists a positive eigenfunction corresponding to $\mu_1(G_U)$; 
\item[(c)]
$\mu_1(G_U)\leq \rho_1(G[\Omega_U]).$
The equality holds if and only if all weights of boundary edges in $G_U$ vanish.
\end{enumerate}
\end{corollary}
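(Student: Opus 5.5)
The plan is to write the Dirichlet adjacency matrix of $G_U$ as $\mathcal{A}_{\Omega_U}=A(G[\Omega_U])-W_{\Omega_U}$ and apply Lemma~\ref{lem:Metzler matrix_PF_theorem} with $B=A(G[\Omega_U])+sI$ and $M=\mathcal{A}_{\Omega_U}+sI-sI$. Here $s\geq \max_x W_{xx}$ is chosen so that $B$ is entrywise nonnegative.

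\textbf{Irreducibility.} Before invoking the lemma, I check that $\mathcal{A}_{\Omega_U}$ is an irreducible Metzler matrix.

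\emph{Metzler property.} Its off-diagonal entries are $1$ or $0$ on pairs of $\Omega_U$, according to adjacency in $G$. This is because an edge $uv$ with $u,v\in\Omega_U$ has $f(u)f(v)>0$, so it lies entirely in $U$ and keeps weight $1$.

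\emph{Irreducibility.} $U$ is a connected component of $|K(G)|\setminus\widetilde f^{-1}(0)$. On such a component $f$ has constant sign at vertices, and $\widetilde f$ does not vanish inside the edges joining them. Hence any path in $U$ between two vertices of $\Omega_U$ can only traverse whole edges of $G$ whose endpoints are both in $\Omega_U$. Therefore $G[\Omega_U]$ is connected.

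\emph{Nonemptiness.} $\Omega_U\neq\emptyset$: a component of $|K(G)|\setminus\widetilde f^{-1}(0)$ contained in the open part of a single edge would force both endpoints to be zeros of $f$. Then $\widetilde f\equiv 0$ on that edge, which is impossible.

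With this, (a) and (b) follow directly from parts (i) and (ii) of Lemma~\ref{lem:Metzler matrix_PF_theorem}. The simplicity of $\sigma_1(M)$ is unaffected by the shift by $sI$.

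\textbf{Part (c).} I would argue variationally. Let $x>0$ be the unit eigenvector of $\mu_1(G_U)$ from (b). Then
\[
\mu_1(G_U)=x^\top\mathcal{A}_{\Omega_U}x=x^\top A(G[\Omega_U])x-\sum_{v\in\Omega_U}W_{vv}x_v^2\leq \rho_1(G[\Omega_U])-\sum_{v\in\Omega_U}W_{vv}x_v^2 .
\]
This needs the boundary weights to be nonnegative. For a boundary vertex $z_{uv}$ from a sign-changing edge, the weight is $1/l_{uz}-1/l_{uv}=1/l_{uz}-1>0$, since $l_{uz}<l_{uv}=1$. For a copied zero vertex $\tilde v$ the weight is $0$ by definition. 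So $W_{\Omega_U}\geq 0$, and the inequality holds.

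\textbf{Equality in (c).} If all boundary weights vanish, then $W_{\Omega_U}=0$ and equality is trivial. Conversely, suppose equality holds. Then both $\sum W_{vv}x_v^2=0$ and $x^\top A(G[\Omega_U])x=\rho_1(G[\Omega_U])$. Since $x>0$, the first condition forces every $W_{vv}=0$, which means every boundary edge weight is zero. Alternatively, one can use the equality case of Weyl's inequality (Lemma~\ref{thm:Weyl_inequality}) with $A=A(G[\Omega_U])$ and $B=-W_{\Omega_U}$.

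\textbf{Main obstacle.} The main step needing care is the topological argument that $G[\Omega_U]$ is connected and that interior edges keep weight $1$, i.e. that cutting happens only at sign changes and zeros. I also need to make sure the hypothesis that $\rho$ is not the spectral radius is used correctly. It is used only to guarantee $f$ changes sign or vanishes somewhere, so that the boundary of $U$ is nonempty. It is not needed for the Perron--Frobenius conclusions themselves.
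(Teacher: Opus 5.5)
Your proof is correct and follows the paper's route: write $\mathcal{A}_{\Omega_U}=A(G[\Omega_U])-W_{\Omega_U}$, shift by $sI$ and apply the Metzler Perron--Frobenius lemma for (a)--(b), then compare with $A(G[\Omega_U])$ for (c). Note one slip: the nonnegative matrix must be $B=A(G[\Omega_U])-W_{\Omega_U}+sI$, not $A(G[\Omega_U])+sI$.

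The only difference is in (c). The paper gets the inequality and its equality case from Weyl's inequality, while your Rayleigh-quotient argument with the positive Perron vector does the same thing more directly. You also verify facts the paper leaves implicit: that $G[\Omega_U]$ is connected, that interior edges keep weight $1$, and that $W_{\Omega_U}\geq 0$.
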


\begin{proof}
Since $G[\Omega_U]$ is a subgraph of $G$, then $\mathcal{A}_{\Omega_U}\coloneqq A(G[\Omega_U])-W_{\Omega_U}$. 
For convenience, we simply write $\mathcal{A}_{\Omega_U} \coloneqq A-W$.
We know that the eigenvalues $\sigma_i$ of $\mathcal{A}_{\Omega_U}$ are equivalent to $\mu_i(G_U)$. 
Therefore, it suffices to prove the corresponding properties for $\sigma_i$.
Let $s\geq \max\limits_{1\leq i\leq |\Omega_U|} W_{ii}$.
Since $\mathcal{A}_{\Omega_U}=(A-W+sI)-sI$, $(a)$ and $(b)$ follow directly from Lemma~\ref{lem:Metzler matrix_PF_theorem}.
The inequality in $(c)$ follows directly from Lemma~\ref{thm:Weyl_inequality}.
It remains to discuss the equality case.
Let $\textbf{x}$ be a positive eigenfunction  such that $A\textbf{x}=\sigma_1(A)\textbf{x}, W\textbf{x}=\sigma_1(W)\textbf{x},$
and $(A-W)\textbf{x}=\sigma_1(A-W)\textbf{x}$.
If $\sigma_1(A-W)=\sigma_1(A)$, then $(A-W)\textbf{x}= \sigma_1(A)\textbf{x}-\sigma_1(W)\textbf{x} =\sigma_1(A)\textbf{x}$, which implies $W=0$.
It follows that the weights of all boundary edges of $G_U$ vanish.
\end{proof}

Our proofs are based on the following nodal domain theorem.
\begin{theorem}\label{thm:adjacency_nodal_D}
Let $G=(V,E,1, w)$ be a finite connected weighted graph, and let $f$ be an
adjacency eigenfunction of $G$ corresponding to the eigenvalue $\rho$, which
is not the spectral radius of $G$. Then, for each nodal domain $U$ of $f$,
$f|_{\Omega_U}$ is a Dirichlet adjacency eigenfunction of $G_U$ with vanishing data on $B_D(G_U)$. 
Moreover, $\mu_1(G_U)=\rho$.
\end{theorem}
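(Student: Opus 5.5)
The plan is to verify the eigen-equation vertex by vertex, following the proof of Theorem~\ref{thm:dirichlet_nodal_D} but with the adjacency boundary weights of Definition~\ref{def:A_U}, and then to identify the eigenvalue with $\mu_1(G_U)$ via Perron--Frobenius.

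\textbf{Preliminary structure of a nodal domain.} First I record three facts about a nodal domain $U$.
\begin{itemize}
\item $\Omega_U\neq\emptyset$. Any point of $U$ lies on an edge where $\widetilde f\neq 0$. By linearity, at least one endpoint of that edge has the same nonzero sign, and it lies in the same component.
\item $f$ has constant nonzero sign on $\Omega_U$.
\item $G[\Omega_U]$ is connected. Two vertices of $\Omega_U$ are joined by a path inside $U$. Such a path never meets a zero of $\widetilde f$, so it only traverses full edges between same-sign vertices of $\Omega_U$.
\end{itemize}
The boundary of $G_U$ then consists of copies $\tilde v$ of zero vertices, which carry weight $0$, and of zero points $z_{uv}$ on sign-changing edges $uv$ with $u\in\Omega_U$.

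\textbf{Key computation: the weight on a sign-changing edge.} Let $uv$ be such an edge, with $f(u)f(v)<0$. The zero point $z_{uv}$ satisfies $l_{uz_{uv}}=\frac{|f(u)|}{|f(u)-f(v)|}l_{uv}$. Since the signs are opposite, $|f(u)-f(v)|=|f(u)|+|f(v)|$. Hence
\begin{align*}
w^U_{uz_{uv}}=\frac{1}{l_{uz_{uv}}}-\frac{1}{l_{uv}}
= w_{uv}\Bigl(\frac{|f(u)|+|f(v)|}{|f(u)|}-1\Bigr)=w_{uv}\frac{|f(v)|}{|f(u)|},
\end{align*}
and therefore
\begin{align*}
-w^U_{uz_{uv}}f(u)=-w_{uv}|f(v)|\operatorname{sgn}f(u)=w_{uv}f(v).
\end{align*}

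\textbf{The eigen-equation on $\Omega_U$.} For $u\in\Omega_U$ I split $\rho f(u)=\sum_{v\sim u}w_{uv}f(v)$ into three groups of neighbours.
\begin{itemize}
\item Neighbours in $\Omega_U$ give exactly the interior part $\sum_{v\in\Omega_U,\,v\sim u}w_{uv}f(v)$ of $\mathcal A_{\Omega_U}$.
\item Zero neighbours contribute $0$. This matches the vanishing weight $w^U_{u\tilde v}=0$.
\item Opposite-sign neighbours contribute $-w^U_{uz_{uv}}f(u)$, by the computation above.
\end{itemize}
Summing, $\rho f(u)=\mathcal A_{\Omega_U}f|_{\Omega_U}(u)$. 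So $f|_{\Omega_U}$ is a Dirichlet adjacency eigenfunction of $G_U$ with eigenvalue $\rho$.

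\textbf{Identifying $\rho$ with $\mu_1(G_U)$.} The matrix $\mathcal A_{\Omega_U}=A(G[\Omega_U])-W_{\Omega_U}$ is an irreducible Metzler matrix, because $G[\Omega_U]$ is connected. By Corollary~\ref{cor:A-W_properties}(a),(b), $\mu_1(G_U)$ is simple and has a positive eigenvector $\mathbf x$. Suppose $\rho\neq\mu_1(G_U)$. Then $f|_{\Omega_U}$ would be orthogonal to $\mathbf x$, since $\mathcal A_{\Omega_U}$ is symmetric. But $f|_{\Omega_U}$ has constant nonzero sign, so $\langle f|_{\Omega_U},\mathbf x\rangle\neq0$, a contradiction. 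Hence $\mu_1(G_U)=\rho$.

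\textbf{Main obstacle.} The step needing the most care is the weight bookkeeping. One must check that the definition $\frac{1}{l_{xy}}-\frac{1}{l_{uv}}$ produces exactly the term $w_{uv}f(v)$, and that the zero-vertex copies consistently get weight $0$. The connectivity of $G[\Omega_U]$, which is needed for irreducibility, is the other point I would verify explicitly.
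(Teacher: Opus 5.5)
Your proposal is correct and follows the paper's proof. Both split $\rho f(u)=\sum_{v\sim u}w_{uv}f(v)$ vertex by vertex into interior, zero-vertex and sign-changing contributions, use the same boundary weight $w^U_{uz_{uv}}=-w_{uv}f(v)/f(u)$, and then apply Perron--Frobenius to the irreducible Metzler matrix $\mathcal A_{\Omega_U}$; your orthogonality argument against the positive Perron vector just spells out the step the paper compresses into a citation of Corollary~\ref{cor:A-W_properties}.
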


\begin{proof}
Consider a nodal domain $U$ of $f$.
By the definition of nodal domain, $\Omega_U\neq\emptyset$ and $G[\Omega_U]$ is
connected.
Moreover, since $\rho$ is not the spectral radius of $G$, the eigenfunction
$f$ cannot be everywhere positive or everywhere negative on $V(G)$. 
Then $U\neq |K(G)|$. 
Since $G$ is connected, we have
$B_D(G_U)=\partial U\neq\emptyset$.

Take $u\in\Omega_U$. 
If $v\sim u$ and $v\notin\Omega_U$, then there is a
boundary vertex associated with the edge $uv$. 
If $f(v)=0$, this vertex is
$\tilde v$, and
\begin{align*}
    l_{u\tilde v}=l_{uv},\qquad
    w_{u\tilde v}=\frac{1}{l_{u\tilde v}}-\frac{1}{l_{uv}}=0.
\end{align*}
If $f(v)\neq 0$, this vertex is $z_{uv}$. 
Since $\widetilde f$ is linear on
$uv$ and $\widetilde f(z_{uv})=0$, we have $l_{u z_{uv}}=\frac{f(u)}{f(u)-f(v)}l_{uv}$, and 
\begin{align*}
    w_{u z_{uv}}
    =
    \frac{1}{l_{u z_{uv}}}-\frac{1}{l_{uv}}
    =
    \frac{f(u)-f(v)}{f(u)}w_{uv}-w_{uv}
    =
    -\frac{w_{uv}f(v)}{f(u)}.
\end{align*}

Therefore, for any $u\in\Omega_U$,
\begin{align*}
\rho f(u)
&=
\sum_{v\sim u}w_{uv}f(v) =
\sum_{v\sim u}w_{uv}\bigl(f(v)-f(u)\bigr)
+
\sum_{v\sim u}w_{uv}f(u) \\
&=
\sum_{\substack{v\sim u\\ v\in\Omega_U}}w_{uv}\bigl(f(v)-f(u)\bigr)
+
\sum_{\substack{z_{uv}\sim u\\ z_{uv}\in\widetilde B_U^0}}
w_{uv}\bigl(f(v)-f(u)\bigr)
+
\sum_{\substack{\tilde v\sim u\\ \tilde v\in B_U^0}}
w_{uv}\bigl(0-f(u)\bigr)
+
\sum_{v\sim u}w_{uv}f(u) \\
&=
\sum_{\substack{v\sim u\\ v\in\Omega_U}}w_{uv}\bigl(f(v)-f(u)\bigr)
+
\sum_{\substack{z_{uv}\sim u\\ z_{uv}\in\widetilde B_U^0}}
w_{uv}\frac{-f(v)}{f(u)}\bigl(-f(u)\bigr)
+
\sum_{\substack{\tilde v\sim u\\ \tilde v\in B_U^0}}
0\cdot\bigl(-f(u)\bigr)
+
\sum_{\substack{v\sim u\\ v\in\Omega_U}}w_{uv}f(u) \\
&=
\sum_{\substack{v\sim u\\ v\in\Omega_U}}w_{uv}\bigl(f(v)-f(u)\bigr)
+
\sum_{\substack{z_{uv}\sim u\\ z_{uv}\in\widetilde B_U^0}}
w_{u z_{uv}}\bigl(-f(u)\bigr)
+
\sum_{\substack{\tilde v\sim u\\ \tilde v\in B_U^0}}
w_{u\tilde v}\bigl(-f(u)\bigr)
+
\sum_{\substack{v\sim u\\ v\in\Omega_U}}w_{uv}f(u) \\
&=
(\mathcal A_{\Omega_U} f)(u).
\end{align*}
It follows that $\rho$ is an eigenvalue of $\mathcal A_{\Omega_U}$ and $f|_{\Omega_U}$ is a Dirichlet adjacency eigenfunction of $G_U$
with vanishing data on $B_D(G_U)$.
It remains to show that $\rho=\mu_1(G_U)$. Since $U$ is a nodal domain,
$f$ has a fixed sign on $\Omega_U$ and $f(u)\neq 0$ for all $u\in\Omega_U$.
Then, by Corollary~\ref{cor:A-W_properties}, we have
$\mu_1(G_U)=\rho$.

The proof is complete.
\end{proof}


\begin{figure}[htbp]
\centering
\begin{tikzpicture}[
    scale=1.08,
    transform shape,
    >=Latex,
    edge/.style={line width=0.85pt},
    pospt/.style={circle,fill=red!75!black,inner sep=1.8pt},
    negpt/.style={circle,fill=blue!70!black,inner sep=1.8pt},
    zeropt/.style={circle,fill=gray!65,inner sep=1.8pt},
    bdpt/.style={circle,draw=orange!85!black,fill=white,inner sep=2pt,line width=0.8pt},
    lab/.style={
        font=\scriptsize,
        fill=white,
        fill opacity=1,
        text opacity=1,
        inner sep=1.8pt,
        outer sep=0pt
    },
    val/.style={
        font=\scriptsize,
        fill=white,
        fill opacity=1,
        text opacity=1,
        inner sep=1pt,
        outer sep=0pt
    },
    weightlab/.style={
        font=\scriptsize,
        text=black,
        fill=white,
        fill opacity=1,
        text opacity=1,
        inner sep=1.2pt,
        outer sep=0pt
    }
]


\node at (0,2.55) {$G$};

\draw[gray!35,rounded corners,line width=0.6pt]
    (-3.05,-2.55) rectangle (2.95,2.85);

\draw[blue!70!black,dashed,rounded corners]
    (-1.90,-0.45) rectangle (-0.98,1.60);

\draw[red!70!black,dashed,rounded corners]
    (0.98,-0.45) rectangle (1.90,1.60);

\node[negpt]  (a) at (-1.45,0) {};
\node[pospt]  (b) at (1.45,0) {};
\node[zeropt] (c) at (0,-1.15) {};
\node[negpt]  (e) at (-1.45,1.25) {};
\node[pospt]  (d) at (1.45,1.25) {};

\draw[edge] (a) -- (b);
\draw[edge] (a) -- (c);
\draw[edge] (b) -- (c);
\draw[edge] (b) -- (d);
\draw[edge] (a) -- (e);

\coordinate (zab) at (0,0);
\draw[orange!85!black,line width=1pt]
    ($(zab)+(-0.07,-0.07)$) -- ($(zab)+(0.07,0.07)$);
\draw[orange!85!black,line width=1pt]
    ($(zab)+(-0.07,0.07)$) -- ($(zab)+(0.07,-0.07)$);

\node[lab, text=orange!85!black, anchor=south] 
    at ($(zab)+(0,0.12)$) {$z_{ab}$};

\node[lab, text=blue!80!black, anchor=east]  
    at ($(a)+(-0.1,0)$) {$a:-1$};

\node[lab, text=red!80!black, anchor=west]   
    at ($(b)+(0.2,0)$) {$b:1$};

\node[lab, text=gray!80!black, anchor=north] 
    at ($(c)+(0,-0.14)$) {$c:0$};

\node[lab, text=blue!80!black, anchor=east]  
    at ($(e)+(-0.15,0)$) {$e:-\varphi$};

\node[lab, text=red!80!black, anchor=west] 
    at ($(d)+(0.15,0)$) {$d:\varphi$};

\draw[->,line width=1.0pt] (3.18,0) -- (4.18,0);


\node at (6.35,2.55) {$G_U$};

\draw[gray!35,rounded corners,line width=0.6pt]
    (4.35,-2.55) rectangle (8.65,2.85);

\draw[red!70!black,dashed,rounded corners]
    (5.90,-0.45) rectangle (6.82,1.60);

\node[pospt] (Ub)   at (6.35,0) {};
\node[pospt] (Ud)   at (6.35,1.25) {};
\node[bdpt]  (Uzab) at (5.00,0) {};
\node[bdpt]  (Uct)  at (5.25,-1.05) {};

\draw[edge] (Ub) -- (Ud);
\draw[edge,black!70,dashed] (Ub) -- (Uzab);
\draw[edge,black!70,dashed] (Ub) -- (Uct);

\node[weightlab] at (5.62,0.32) {$w_{bz_{ab}}=1$};
\node[weightlab] at (6.5,-0.56) {$w_{b\widetilde c}=0$};

\node[lab, text=red!80!black, anchor=west]       
    at ($(Ub)+(0.2,0)$) {$b:1$};

\node[lab, text=red!80!black, anchor=west]       
    at ($(Ud)+(0.15,0)$) {$d:\varphi$};

\node[lab, text=orange!85!black, anchor=east]    
    at ($(Uzab)+(0,-0.25)$) {$z_{ab}$};

\node[lab, text=orange!85!black, anchor=north east]    
    at ($(Uct)+(-0.05,-0.12)$) {$\widetilde c$};

\node[lab, text=red!70!black] 
    at (7.35,1.9) {$\Omega_U=\{b,d\}$};

\node[align=center, font=\small] at (2.80,-3.52) {
$A(G)f=\rho f$ on $G$
\qquad$\Longrightarrow$\qquad
$\mathcal A_{\Omega_U}(f|_{\Omega_U})
=\rho f|_{\Omega_U}$ on $\Omega_U$
};

\end{tikzpicture}
\caption{Construction of the subgraph $G_U$ from the positive nodal domain $U$. 
Here $\varphi=\frac{1+\sqrt5}{2}$.}
\label{fig:adjacency-nodal-domain-subgraph}
\end{figure}

\begin{example}
To better understand Theorem~\ref{thm:adjacency_nodal_D}, we consider the graph $G$ shown in
\cref{fig:adjacency-nodal-domain-subgraph}. \
The characteristic polynomial $P(G;x)$ of $G$ is given by $P(G;x) =x(x^2-x-3)(x^2+x-1)$.
Therefore, $\rho_2(G)=\frac{\sqrt5-1}{2}$.
Let $\varphi=\frac{1+\sqrt5}{2}$.
With respect to the vertex ordering $(a,b,c,d,e)$, an eigenvector corresponding to $\rho_2(G)$ can be chosen as $f=(-1,1,0,\varphi,-\varphi)^\top$.
Now consider the positive nodal domain $U$.
Then $B_D(G_U)=\{\widetilde c,z_{ab}\}$.
The corresponding boundary weights at $b$ are $w_{b\widetilde c}=\frac{f(c)}{f(b)}=0$ and $w_{bz_{ab}}=-\frac{f(a)}{f(b)}=1$.
Therefore, with respect to the ordering $b,d$ of $\Omega_U$, the  Dirichlet adjacency matrix is
\begin{align*}
    \mathcal A_{\Omega_U}
    =
    \begin{pmatrix}
    -1&1\\
    1&0
    \end{pmatrix}.
\end{align*}
Its characteristic polynomial is $\det(xI-\mathcal A_{\Omega_U})=x^2+x-1$.
Hence the eigenvalues of $\mathcal A_{\Omega_U}$ are $\frac{\sqrt5-1}{2}$ and $-\frac{1+\sqrt5}{2}$.
Moreover, $ f|_{\Omega_U} =
    \begin{pmatrix}
    1\\
    \varphi
    \end{pmatrix}$ satisfies $\mathcal A_{\Omega_U} \begin{pmatrix}
    1\\
    \varphi
    \end{pmatrix}
    =
    \frac{\sqrt5-1}{2}
    \begin{pmatrix}
    1\\
    \varphi
    \end{pmatrix}$.
Therefore, $\mathcal A_{\Omega_U}
    \bigl( f|_{\Omega_U}\bigr)
    =
    \rho_2(G) f|_{\Omega_U}$.
\end{example}

\subsection{Proofs of Theorems~\ref{lem:adjacency_nodal_domain} and ~\ref{thm:connected_even_bridge}}

\begin{lemma}[\cite{LiFeng1979}]\label{lem:proper_subgraph_spectral_radius}
Let $G$ be a connected graph, and let $H$ be a proper subgraph of $G$.
Then $\rho_1(H)<\rho_1(G)$.
\end{lemma}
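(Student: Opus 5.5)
This is the classical Perron--Frobenius strict monotonicity, and I would prove it via the Rayleigh quotient together with the positivity of the Perron vector from Lemma~\ref{lem:Metzler matrix_PF_theorem}.

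\emph{Reduction.} Without loss of generality, $H$ may be assumed to have the same vertex set as $G$: if $H$ misses vertices, pad it with isolated vertices. Padding leaves $\rho_1(H)$ unchanged, because the adjacency matrix becomes block diagonal with an added zero block and $\rho_1(H)\geq 0$. After padding, $A(H)\leq A(G)$ entrywise with $A(H)\neq A(G)$. Indeed, either an edge is missing, or a vertex of $G$ is missing in $H$; in the latter case, since $G$ is connected with at least two vertices, that vertex carries an edge of $G$ absent from $H$.

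\emph{Key step.} Let $\mathbf{x}\geq \mathbf{0}$ be a unit eigenvector of $A(H)$ for $\rho_1(H)$. Its existence follows from Perron--Frobenius for nonnegative matrices, applied componentwise or by a limiting argument. Then
\[
\rho_1(H)=\mathbf{x}^\top A(H)\mathbf{x}\leq \mathbf{x}^\top A(G)\mathbf{x}\leq \rho_1(G).
\]

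\emph{Equality analysis (the only delicate point).} Suppose $\rho_1(H)=\rho_1(G)$. Then $\mathbf{x}$ attains the maximum of the Rayleigh quotient of $A(G)$, so $\mathbf{x}$ is an eigenvector of $A(G)$ for $\rho_1(G)$. Since $G$ is connected, $A(G)$ is irreducible, and Lemma~\ref{lem:Metzler matrix_PF_theorem} (with $s=0$) gives that $\mathbf{x}$ is a multiple of the positive Perron vector, hence $\mathbf{x}>\mathbf{0}$. Equality in the first inequality forces $\mathbf{x}^\top(A(G)-A(H))\mathbf{x}=0$. This quantity is a sum of the terms $2x_ux_v>0$ over the edges $uv\in E(G)\setminus E(H)$, and that set is nonempty, which is a contradiction. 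Hence $\rho_1(H)<\rho_1(G)$.
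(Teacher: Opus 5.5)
Your proof is correct and complete. The paper does not prove this lemma; it is quoted from Li and Feng (1979) and used as a black box, for instance in the converse part of Theorem~\ref{lem:adjacency_nodal_domain} and in Lemma~\ref{lem:alpha_beta_weak_pendant_monotonicity}. So there is no argument in the paper to compare with. What you give is the standard Rayleigh-quotient/Perron--Frobenius proof. Its advantage is that it uses only tools the paper already states, namely the variational characterization of $\rho_1$ and Lemma~\ref{lem:Metzler matrix_PF_theorem} with $s=0$, so the external citation becomes unnecessary.

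The delicate points are handled properly:
\begin{itemize}
\item Padding with isolated vertices leaves $\rho_1(H)$ unchanged, because $\rho_1(H)\geq 0$.
\item Connectivity of $G$ together with properness of $H$ (which forces $|V(G)|\geq 2$) guarantees $E(G)\setminus E(H)\neq\emptyset$.
\item In the equality case, simplicity of $\rho_1(G)$ forces the nonnegative maximizer $\mathbf{x}$ to be positive, so $\mathbf{x}^{\top}(A(G)-A(H))\mathbf{x}>0$.
\end{itemize}

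Two small simplifications are available if you want them:
\begin{itemize}
\item You do not need Perron--Frobenius for the possibly reducible matrix $A(H)$. If $\mathbf{x}$ is any unit eigenvector of $A(H)$ for $\rho_1(H)$, let $\mathbf{y}$ be its entrywise absolute value, $y_v=|x_v|$. Then $\mathbf{y}^{\top}A(H)\mathbf{y}\geq \mathbf{x}^{\top}A(H)\mathbf{x}=\rho_1(H)$, so $\mathbf{y}$ is already a nonnegative maximizer.
\item Positivity in the equality case follows directly from the eigen-equation $\rho_1(G)x_u=\sum_{v\sim u}x_v$, without invoking simplicity. A zero entry forces all neighbouring entries to vanish, so by connectivity $\mathbf{x}=\mathbf{0}$, which is a contradiction.
\end{itemize}
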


Now we are ready to prove our main theorems.





\begin{proof}[\bf{Proof of Theorem~ \ref{lem:adjacency_nodal_domain}}]

We consider the connected case first. 
Let $f$ be an eigenfunction corresponding
to $\rho_2(G)$. 
Choose a nodal domain $U$ of $f$ such that $|\Omega_U|$ is
minimum. 
Since $\rho_2(G)$ is not the spectral radius, $f$ changes sign.
It follows that $f$ has at least two nodal domains, and $|\Omega_U|\leq \lfloor n/2\rfloor$. 
By Theorem~\ref{thm:adjacency_nodal_D}, Corollary~\ref{cor:A-W_properties}, and the fact that $\alpha(s)$ is strictly increasing with respect to $s$, we have
\begin{equation}\label{eq:nodal_chain}
\begin{aligned}
    \rho_2(G) = \mu_1(G_U)
    \leq \rho_1(G[\Omega_U]) \leq \alpha(\lfloor n/2\rfloor).
\end{aligned}
\end{equation}
Now we characterize the condition that the equality holds.
The equality in the first inequality of~\eqref{eq:nodal_chain}, by Corollary \ref{cor:A-W_properties}, implies that all boundary weights of $G_U$ vanish. 
Therefore, every boundary vertex is a copy of some zero point of $f$, which also implies that $f$ has at least one zero point.
The equality in the second inequality of~\eqref{eq:nodal_chain} implies that $\Omega_U$ is of the form $G_1^*(\lfloor n/2\rfloor)$.
Consider another nodal domain $U'\neq U$.
By Theorem \ref{thm:adjacency_nodal_D}, Corollary \ref{cor:A-W_properties}, we have $\mu_1(G_{U'}) = \rho_2(G) = \alpha(\lfloor n/2\rfloor)\leq \rho_1(G[\Omega_{U'}])$.
Since $\alpha(s)$ is strictly increasing with respect to $s$, $|\Omega_{U'}| \geq \lfloor n/2\rfloor$. 
Since the existence of zero point, $|\Omega_U|+|\Omega_{U'}|+1\leq n$.
It follows that $|\Omega_{U'}| = \lfloor n/2\rfloor$ and $n$ is odd.
Therefore, the graph is of the form $G_1^*(\lfloor n/2\rfloor) \bullet K_1 \bullet G_2^*(\lfloor n/2\rfloor) \in \mathcal{C}$.

Conversely, suppose that $n$ is odd and that $G$ is obtained by joining a vertex to two such extremal graphs of order $\lfloor n/2\rfloor$. 
Since the extremal graph need not be unique, write these two graphs as $H_1$ and $H_2$. 
The adjacency matrix
of $G$ has the form
\begin{align*}
    A(G)=
    \begin{pmatrix}
        A({H_1}) & 0 & {\bf b_1}  \\
        0 & A({H_2}) & {\bf b_2}  \\
        {\bf b_1^{\top}} & {\bf b_2^{\top}} & 0
    \end{pmatrix}.
\end{align*}
Let $\mathbf{u_i}$ be a Perron eigenvector of $A({H_i})$ corresponding to $\alpha(\lfloor n/2\rfloor)$ for $i=1,2$. Choose constants $c_1,c_2 \neq 0$, such that
$c_1 {\bf b_1^T} {\bf u_1}+c_2 {\bf b_2^T} {\bf u_2}=0$.  
Then
\begin{align*}
    A(G)
    \begin{pmatrix}
        c_1 {\bf u_1}\\
        c_2 {\bf u_2}\\
        0
    \end{pmatrix}
    =
    \begin{pmatrix}
        \alpha(\lfloor n/2\rfloor) c_1 {\bf u_1}\\
        \alpha(\lfloor n/2\rfloor) c_2 {\bf u_2}\\
        c_1 {\bf b_1^{\top}} {\bf u_1} + c_2 {\bf b_2^{\top}} {\bf u_2}
    \end{pmatrix}
    =
    \alpha(\lfloor n/2\rfloor)
    \begin{pmatrix}
        c_1 {\bf u_1}\\
        c_2 {\bf u_2}\\
        0
    \end{pmatrix}.
\end{align*}
Thus $\alpha(\lfloor n/2\rfloor)$ is an eigenvalue of $G$.
On the other hand, $A({H_1})\oplus A({H_2})$ is a principal submatrix of $A(G)$, and $\rho_1(A({H_1})\oplus A({H_2}))=\alpha(\lfloor n/2\rfloor)$. 
By Lemma~\ref{cor:spectral_radius_monotonicity} ,
$\rho_2(G)\leq \alpha(\lfloor n/2\rfloor)$. 
Moreover, since $G$ is connected and properly contains both $H_1$ and $H_2$, 
we have $\rho_1(G)>\alpha(\lfloor n/2\rfloor)$ by Lemma~\ref{lem:proper_subgraph_spectral_radius}. 
Since $\alpha(\lfloor n/2\rfloor)$ is an eigenvalue of $G$, we conclude that $\rho_2(G)=\alpha(\lfloor n/2\rfloor)$.
This proves the equality case in the connected setting.

We now remove the connectedness assumption.
Suppose that $G$ is disconnected, and let $H_1,\ldots,H_r$ be its connected
components. 
The eigenvalue $\rho_2(G)$ is either $\rho_2(H_k)$ for some component $H_k$, or the second largest value among $\rho_1(H_1),\cdots,\rho_1(H_r)$.
Combining this with the preceding discussion for the connected case, we obtain that, for each $i$,
\begin{equation}\label{eq:nodal_chain_2}
\begin{aligned}
    \rho_2(H_i)
    \leq \alpha(\lfloor |V(H_i)|/2\rfloor)
\leq  \alpha(\lfloor n/2\rfloor).
\end{aligned}
\end{equation}
Moreover, at most one component can have spectral radius larger than
$\alpha(\lfloor n/2\rfloor)$. Otherwise two components would both
have order larger than $\lfloor n/2\rfloor$, which is impossible for a graph of order $n$. 
Therefore $\rho_2(G) \leq \alpha(\lfloor n/2\rfloor)$.

If $n$ is even and the equality holds, then $\rho_2(G)$ cannot come from some $\rho_2(H_k)$.
Otherwise, since $G$ is disconnected, then $|V(H_k)|<n$, and we have $\lfloor |V(H_k)|/2\rfloor<\lfloor n/2\rfloor$, which implies that $\rho_2(H_k)\leq \alpha(\lfloor |V(H_k)|/2\rfloor)< \alpha( n/2)$.
Thus equality comes from a
component $H_{i_1}$ with $\rho_1(H_{i_1})=\alpha( n/2 )$.
Since $\rho_2(G)=\alpha( n/2)$, we have
$\rho_1(G)\geq \alpha( n/2 )$, which forces the spectral radius
of another connected component $H_{i_2}$ to be at least
$\alpha( n/2 )$.
Since $\alpha(s)$ is strictly increasing with respect to $s$, components $H_{i_1}$ and $H_{i_2}$ have order $ n/2$. 
Therefore, the extremal graph is of the form $G_1^*(n/2)\cup G_2^*(n/2)$.
The converse follows immediately from the spectrum of a disjoint union.

If $n$ is odd and the equality holds, then $\rho_2(G)$ cannot come from some $\rho_2(H_k)$.
Indeed, if $|V(H_k)|\leq 2\lfloor n/2\rfloor-1$, then the second inequality in inequality~\eqref{eq:nodal_chain_2} is strict.
If $|V(H_k)|= 2\lfloor n/2\rfloor$, then $\rho_2(H_k)<\alpha(\lfloor n/2\rfloor)$, as can be seen from the discussion of the connected case above.
Hence $\rho_2(H_i)<\alpha(\lfloor n/2\rfloor)$ for each component $H_i$.
Then equality must come from a
component $H_{j}$ with $\rho_1(H_{j})=\alpha(\lfloor n/2 \rfloor)$.
Denote the union of the remaining components by $G_2$, and we have $|V(G_2)|=\lceil n/2\rceil$.
We claim that $\rho_1(G_2)\geq \alpha(\lfloor n/2\rfloor)$. 
Indeed, if $\rho_1(G_2)<\alpha(\lfloor n/2\rfloor)$, then $\rho_2(G)=\max\{\rho_2(H_j),\rho_1(G_2)\}< \alpha(\lfloor n/2\rfloor)$, a contradiction.
Therefore, the graph is of the form $G_1^*(\lfloor n/2\rfloor)\cup G_2(\lceil n/2\rceil)$, where  $\rho_1(G_2(\lceil n/2\rceil))\geq \rho_1(G_1^*(\lfloor n/2\rfloor))$ and $\rho_1(G_1^*(\lfloor n/2\rfloor))=\alpha(\lfloor n/2\rfloor)$.
The converse is direct based on the monotonicity of $\alpha(s)$.

Combining the connected and disconnected cases gives the stated equality characterization.
The proof is complete.

\end{proof}


\begin{proof}[\bf{{Proof of Theorem \ref{thm:connected_even_bridge}}}]
We first show that $x_{u^*}=\min_{v\in V(G^*(t))}x_v$.
Suppose, to the contrary, that there exists a vertex $v\in V(G^*(\frac{n}{2}))$ such that $x_v<x_{u^*}$.
The pair $(G^*(\frac{n}{2}),v)$ is admissible in the definition of $\beta(\frac{n}{2})$.
Then
\begin{align*}
    \beta(\frac{n}{2})
    &\geq \sigma_1(A({G^*(\frac{n}{2})})-I_{vv}) \geq
    \frac{\mathbf{x}^{\top}(A({G^*(\frac{n}{2})})-I_{vv})\mathbf{x}}{\mathbf{x}^{\top}\mathbf{x}}=
    \frac{\mathbf{x}^{\top}(A({G^*(\frac{n}{2})})-I_{u^*u^*})\mathbf{x}}{\mathbf{x}^{\top}\mathbf{x}}
    +
    \frac{x_{u^*}^2-x_v^2}{\mathbf{x}^{\top}\mathbf{x}}  \\
    &=
    \beta(\frac{n}{2})+\frac{x_{u^*}^2-x_v^2}{\mathbf{x}^{\top}\mathbf{x}}>\beta(\frac{n}{2}),
\end{align*}
a contradiction. 
Therefore, $x_{u^*}=\min_{v\in V(G^*(\frac{n}{2}))}x_v$.

Next, we prove that $\rho_2(G)\leq \beta(\frac{n}{2})$.
Let $G\in\mathcal C$ be a connected graph of order $n$, and let $f$ be a $\rho_2(G)$-eigenfunction. 
Hence $f$ changes sign. 
Choose a nodal domain $U$ of $f$ such that $|\Omega_U|$ is minimum.
It follows that $|\Omega_U|\leq n/2$.
If $|\Omega_U|<n/2$, then, by  Corollary~\ref{cor:A-W_properties}, Theorem~\ref{thm:adjacency_nodal_D}, and $\beta(n/2)>\alpha(s)$ for $s<n/2$, we obtain
\begin{equation}
\begin{aligned}\label{eq:U_n_even_1}
    \rho_2(G) = \mu_1(G_U) \leq\rho_1(G[\Omega_U]) \leq
    \alpha(|\Omega_U|) < \beta(n/2).
\end{aligned}
\end{equation}
If $|\Omega_U|=n/2$, then $f$ has exactly two nodal domains.
Denote the other one by $U'$.
We have $|\Omega_{U'}|=n/2$.
Then, $\{v\in V(G): f(v)=0\}=\emptyset$.
Therefore, every boundary vertex of $G_U$ and $G_{U'}$ is produced by cutting a sign-changing edge between $\Omega_U$ and $\Omega_{U'}$.


Let $\{a\in \Omega_U: N_{\Omega_{U'}}(a)\neq \emptyset\}=\{a_1,\ldots,a_p\}$. 
For each $i\in\{1,\ldots,p\}$, write $N_{\Omega_{U'}}(a_i)=\{b_{i1},\ldots,b_{iq_i}\}$. 
Then the set of edges of $G$ joining the two nodal domains $U$ and $U'$ is 
\begin{align*}
    E(G[\Omega_U],G[\Omega_{U'}])=\{a_i b_{ij}\in E(G): 1\leq i\leq p,\ 1\leq j\leq q_i\}.
\end{align*}
Since $G$ is connected, $E(G[\Omega_U],G[\Omega_{U'}])\neq \emptyset$, and we have $\sum_{i=1}^{p}q_i\geq 1$.


Replacing $f$ by $-f$ if necessary, we may assume that $f(a_i)>0>f(b_{ij})$ for all $1\leq i\leq p$ and $1\leq j\leq q_i$. 
For each pair $(i,j)$ with $1\leq i\leq p$ and $1\leq j\leq q_i$, let $z_{ij}$ be the unique zero point of the edgewise linear extension $\widetilde f$ on the edge $a_i b_{ij}$. 
Then $z_{ij}$ is a Dirichlet boundary vertex of both $G_U$ and $G_{U'}$.
Since the original edge $a_i b_{ij}$ has weight $1$, its length is $1$. 
By the linearity of $\widetilde f$ on $a_i b_{ij}$, we have $l_{a_i z_{ij}}=\frac{f(a_i)}{f(a_i)-f(b_{ij})}$ and $l_{b_{ij}z_{ij}}=\frac{-f(b_{ij})}{f(a_i)-f(b_{ij})}$.
It follows by  Definition~\ref{def:A_U} that
$$w^{U}_{a_i z_{ij}}=\frac{1}{l_{a_i z_{ij}}}-1=\frac{f(a_i)-f(b_{ij})}{f(a_i)}-1=-\frac{f(b_{ij})}{f(a_i)}>0$$
and 
$$w^{U'}_{b_{ij} z_{ij}}=\frac{1}{l_{b_{ij} z_{ij}}}-1=\frac{f(a_i)-f(b_{ij})}{-f(b_{ij})}-1=-\frac{f(a_i)}{f(b_{ij})}>0.$$
Therefore, 
\begin{align*}
    \sum_{v\in \Omega_U}(W_{\Omega_U})_{vv}
    = \sum_{i=1}^{p}\sum_{j=1}^{q_i}\left(-\frac{f(b_{ij})}{f(a_i)}\right), \quad
    \sum_{v\in \Omega_{U'}}(W_{\Omega_{U'}})_{vv}
    = \sum_{i=1}^{p}\sum_{j=1}^{q_i}\left(-\frac{f(b_{ij})}{f(a_i)}\right)^{-1}.
\end{align*}
By the Cauchy--Schwarz inequality,
\begin{equation}
\begin{aligned}\label{theta}
    \left(\sum_{v\in \Omega_U}(W_{\Omega_U})_{vv}\right)
    \left(\sum_{v\in \Omega_{U'}}(W_{\Omega_{U'}})_{vv}\right) 
    & =\left(\sum_{i=1}^{p}\sum_{j=1}^{q_i} \left(-\frac{f(b_{ij})}{f(a_i)}\right)\right)\left(\sum_{i=1}^{p}\sum_{j=1}^{q_i}\left(-\frac{f(b_{ij} )}{f(a_i)}\right)^{-1}\right) \\
    &\geq \left(\sum_{i=1}^{p}\sum_{j=1}^{q_i}1 \right )^2
    =(\sum_{i=1}^{p}q_i)^2
    \geq 1.
\end{aligned}
\end{equation}
Consequently, 
\begin{align*}
    \max\left\{
    \sum_{v\in \Omega_U}(W_{\Omega_U})_{vv},
    \sum_{v\in \Omega_{U'}}(W_{\Omega_{U'}})_{vv}
    \right\}\geq 1.
\end{align*}
WLOG, let $\sum_{v\in \Omega_U}(W_{\Omega_U})_{vv}\geq \sum_{v\in \Omega_{U'}}(W_{\Omega_{U'}})_{vv}$.
Then $\sum_{v\in \Omega_U}(W_{\Omega_U})_{vv} \geq 1$.
Let $\theta_U=\sum_{v\in \Omega_U}(W_{\Omega_U})_{vv}$.
Let $\bf y$ be the positive unit eigenvector corresponding to $\sigma_1(A(G[\Omega_U])-W_{\Omega_U})$, and choose $u\in \Omega_U$ such that
$y_u=\min_{v\in \Omega_U}y_v$.
Then, we have
\begin{equation}
\begin{aligned}\label{eq:n_even_D_A_1}
    \sigma_1(A(G[\Omega_U])- I_{uu})
    & \geq \mathbf{y^{\top}}(A(G[\Omega_U])-I_{uu})\mathbf{y}=\mathbf{y^{\top}}(A(G[\Omega_U])-W_{\Omega_U})\mathbf{y}+\sum_{v\in \Omega_U}(W_{\Omega_U})_{vv}y_v^2-y_u^2 \\
   &\geq \mathbf{y^{\top}}(A(G[\Omega_U])-W_{\Omega_U})\mathbf{y}+\sum_{v\in \Omega_U}(W_{\Omega_U})_{vv}y_v^2-\theta_Uy_u^2 \\
   &=\sigma_1(A(G[\Omega_U])-W_{\Omega_U})
    +\sum_{v\in \Omega_U}(W_{\Omega_U})_{vv}(y_v^2-y_u^2) \\
    &\geq \sigma_1(A(G[\Omega_U])-W_{\Omega_U}).
\end{aligned}
\end{equation}
Since $\sigma_1(A(G[\Omega_U])-W_{\Omega_U})=\mu_1(G_U)$, by Corollary~\ref{cor:A-W_properties}, Theorem~\ref{thm:adjacency_nodal_D}, and  \eqref{eq:n_even_D_A_1}, we have 
\begin{align*}
    \rho_2(G)
    = \mu_1(G_U) =\sigma_1(A(G[\Omega_U])-W_{\Omega_U})
    \leq \max_{v\in \Omega_U}\sigma_1(A(G[\Omega_U])-I_{vv})
    \leq
    \beta(n/2).
\end{align*}
Combining the two cases gives $\rho_2(G)\leq \beta(n/2)$.

It remains to show that equality holds if and only if $G\cong 2G^*(n/2)+u_1u_2\in \mathcal C$.
Suppose that equality $\rho_2(G)=\beta(n/2)$ holds for some connected graph $G\in\mathcal C$ of
order $n$.
By~\eqref{eq:U_n_even_1}, the case $|\Omega_U|<n/2$ implies $\rho_2(G)<\beta(n/2)$.

If $|\Omega_U|=n/2$, then, as above, the nodal domains determined by $f$ are exactly two, denoted by $U$ and $U'$.
The equality  also implies 
$\max_{v\in \Omega_U}\sigma_1(A(G[\Omega_U])-I_{vv})=\beta(n/2)$.
Moreover, \eqref{eq:n_even_D_A_1} forces that ${\mathbf y}$ is an eigenvector corresponding to $\sigma_1(A(G[\Omega_U])-I_{vv})$ for any $v\in \Omega_U$, $\sum_{v\in \Omega_U}(W_{\Omega_U})_{vv}(y_v^2-y_u^2) =0$, and $(\theta_U-1)y_u^2=0$.
Hence $\theta_U=1$.
Let $\theta_{U'}=\sum_{v\in \Omega_{U'}}(W_{\Omega_{U'}})_{vv}$.
Since $\theta_U=1$ and $\theta_U=\max\{\theta_U,\theta_{U'}\}$, we have $\theta_{U'}\leq 1$.
It follows by~\eqref{theta} that $\theta_{U'}= 1$, which implies that there is exactly one edge in $E(\Omega_U,\Omega_{U'})$ because $(\sum_{i=1}^{p}q_i)^2 = 1$. 
Hence $U$ and $U'$ are linked by exactly one edge, denoted by $u_1 u_2$, where $u_1\in\Omega_U$ and $u_2\in\Omega_{U'}$, and $W_{\Omega_U}=I_{u_1u_1}$, $W_{\Omega_{U'}}=I_{u_2u_2}$.
Combining with $\sum_{v\in \Omega_U}(W_{\Omega_U})_{vv}(y_v^2-y_u^2) =0$, we know that $u_1$ is a vertex whose component of $y$ has the smallest value and $y$ is also a positive unit eigenvector corresponding to $\sigma_1(A(G[\Omega_U])-I_{u_1u_1})$.
The same argument applied to $U'$ shows that $u_2$ is a minimum-component vertex of the positive eigenvector corresponding to
$\sigma_1(A(G[\Omega_{U'}])-I_{u_2u_2})$.
Since  $\rho_2(G)=\mu_1(G_U)=\mu_1(G_{U'})=\beta(n/2)$,
we obtain $\sigma_1(A(G[\Omega_U])-I_{u_1u_1})=\sigma_1(A(G[\Omega_{U'}])-I_{u_2u_2})=\beta(n/2)$.
By the uniqueness assumption, 
\begin{align*}
    (G[\Omega_U],u_1)\cong (G^*(n/2),u^*),
    \qquad
    (G[\Omega_{U'}],u_2)\cong (G^*(n/2),u^*).
\end{align*}
Therefore $G\cong 2G^*(n/2)+u_1u_2$.

Conversely, suppose that $G\cong 2G^*(n/2)+u_1u_2$,
where $u_i$ is the copy of $u^*$ in the $i$-th copy of $G^*(n/2)$ for $i=1,2$.
Let $H=G^*(n/2)$, and let $\mathbf{x}>0$ be the positive unit eigenvector corresponding to $\sigma_1(A(H)-I_{u^*u^*})=\beta(n/2)$.
Thus $(A(H)-I_{u^*u^*})\mathbf{x}=\beta(n/2)\mathbf{x}$.
Define a vector $\boldsymbol{\xi}$ on $G$ by
\begin{align*}
    \xi_v=
    \begin{cases}
        x_v, & v\in V(H_1),\\
        -x_v, & v\in V(H_2),
    \end{cases}
\end{align*}
where $H_1$ and $H_2$ are the two copies of $H$.
For every vertex $v\neq u_1,u_2$, $(A(G)\boldsymbol{\xi})_v= (A(H)\mathbf{x})_v=\beta(n/2)\xi_v$.
For $u_1$, 
\begin{align*}
    (A(G)\boldsymbol{\xi})_{u_1}=(A(H)\mathbf{x})_{u^*}-x_{u^*}=((A(H)-I_{u^*u^*})\mathbf{x})_{u^*}=\beta(n/2)x_{u^*} =
    \beta(n/2)\xi_{u_1}.
\end{align*}
Similarly, we have $(A(G)\boldsymbol{\xi})_{u_2}=\beta(n/2)\xi_{u_2}$.
Therefore, $A(G)\boldsymbol{\xi}=\beta(n/2)\boldsymbol{\xi}$.
Since $\boldsymbol{\xi}$ changes sign and $G$ is connected, $\beta(n/2)$ is not the
spectral radius of $G$. 
Therefore, $\rho_2(G)\geq \beta(n/2)$.
On the other hand, by the upper bound already proved, we have $\rho_2(G)\leq \beta(n/2)$.
Thus $\rho_2(G)=\beta(n/2)$.
This proves the sufficiency.

In conclusion, the equality holds if and only if $G\cong 2G^*(n/2)+u_1u_2\in \mathcal{C}$.
The proof is complete.
\end{proof}

\begin{remark}
    The uniqueness assumption on the extremal pair may be removable.
In this situation, to obtain the extremal graphs one may need to compare directly the candidate extremal graphs obtained at the final step.
\end{remark}

\section{Applications}
In this section, we illustrate our main results through a number of specific applications.
First, we point out that there are many graph classes that satisfy the monotonicity conditions in our main results.
We recall that 
$\alpha(s)=\max\{\rho_1(H): H\in\mathcal C,\ |V(H)|=s\}$ and  $\beta(t):= \max\{\sigma_1(A(G)-I_{v v}):G\in\mathcal C,\ |V(G)|=t,\ v\in V(G)\}$.




We introduce a large class of graphs which satisfies our monotonicity conditions.
\begin{definition}[Pendant-extension property]\label{def:weak_pendant_extension_property}
Let $\mathcal C$ be a graph class. 
We say that $\mathcal C$ satisfies the {\it pendant-extension property} if, for every graph $G\in\mathcal C$, attaching a pendant edge to any vertex $u\in V(G)$ produces a graph still belonging to $\mathcal C$.
\end{definition}

\begin{lemma}\label{lem:alpha_beta_weak_pendant_monotonicity}
Let $\mathcal C$ be a graph class satisfying the pendant-extension property. 
Then, for any positive integers $s<t$, we have
\[
    \alpha(t)>\beta(t)>\alpha(s)>\beta(s).
\]
\end{lemma}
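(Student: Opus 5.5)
The chain $\alpha(t)>\beta(t)>\alpha(s)>\beta(s)$ for $s<t$ splits into three comparisons. The first is $\alpha(t)>\beta(t)$ at a fixed order. The second is $\beta(t)>\alpha(t-1)$, which together with the increasing $\alpha$ gives $\beta(t)>\alpha(s)$. The third is $\alpha(s)>\beta(s)$, which is the first comparison at order $s$. So it suffices to prove, for every $t$:
\begin{align*}
\alpha(t)>\beta(t)\quad\text{and}\quad \beta(t+1)>\alpha(t).
\end{align*}
Chaining these gives $\alpha(t)>\beta(t)>\alpha(t-1)>\beta(t-1)>\cdots$, so $\alpha$ and $\beta$ are both strictly increasing and interlace as claimed. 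Throughout I assume $\mathcal C$ contains a graph of each relevant order, so that the maxima are finite maxima over nonempty finite sets. This is implicit in the definitions, and the pendant-extension property propagates existence upward from order $s$ to $t$.

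For $\alpha(t)>\beta(t)$, take $(G,v)$ attaining $\beta(t)$, with $G\in\mathcal C$ of order $t$. The matrix $A(G)-I_{vv}$ differs from $A(G)$ by the negative semidefinite rank-one term $-I_{vv}$. By the Rayleigh quotient, or Lemma~\ref{thm:Weyl_inequality} with $\sigma_1(-I_{vv})=0$, we get $\sigma_1(A(G)-I_{vv})\le\rho_1(G)\le\alpha(t)$.

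Strictness is the only delicate point. Equality in Weyl requires a common top eigenvector $x$ with $I_{vv}x=0$, i.e.\ $x_v=0$, and $A(G)x=\rho_1(G)x$.
\begin{itemize}
\item If $G$ is connected, the Perron vector is positive, so this is impossible.
\item If $G$ is disconnected, I would argue componentwise. $\sigma_1(A(G)-I_{vv})$ is the maximum over components. The component containing $v$ has its value strictly reduced by the connected argument, and the other components contribute at most their spectral radii.
\end{itemize}
The disconnected case is where the argument can fail. A maximizer for $\beta$ might have a component, not containing $v$, that already attains $\rho_1(G)$. To avoid this, I would first use the pendant-extension property to argue that the extremal graphs for $\alpha$ and $\beta$ can be taken connected, or else compare against a connected graph in $\mathcal C$. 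I expect this to be the main obstacle: it needs some care, possibly an extra implicit assumption that $\mathcal C$ consists of connected graphs, as in most intended applications.

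For $\beta(t+1)>\alpha(t)$, take $H\in\mathcal C$ of order $t$ with $\rho_1(H)=\alpha(t)$ and Perron vector $x\ge0$. Pick a vertex $u$ with $x_u>0$, attach a pendant vertex $p$ to $u$ to form $H'\in\mathcal C$ of order $t+1$, and use the pair $(H',p)$. Test the Rayleigh quotient of $A(H')-I_{pp}$ on the vector $(x,\varepsilon)$, with $x$ normalized:
\begin{align*}
\frac{\alpha(t)+2\varepsilon x_u-\varepsilon^2}{1+\varepsilon^2}.
\end{align*}
For small $\varepsilon>0$ this exceeds $\alpha(t)$, because the linear term $2\varepsilon x_u$ dominates the quadratic corrections. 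Hence $\beta(t+1)\ge\sigma_1(A(H')-I_{pp})>\alpha(t)$. This step is routine and needs no connectivity.
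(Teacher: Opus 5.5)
Your reduction to the two inequalities $\alpha(t)>\beta(t)$ and $\beta(t+1)>\alpha(t)$ is sound. It is slightly leaner than the paper, which proves $\alpha(r+1)>\alpha(r)$ separately via Lemma~\ref{lem:proper_subgraph_spectral_radius}. Your treatment of $\beta(t+1)>\alpha(t)$, and of the connected case of $\alpha(t)>\beta(t)$, is essentially the paper's.

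The genuine gap is the disconnected case of $\alpha(t)>\beta(t)$, which you leave open. The repair you suggest cannot work, because pendant extension alone does not imply this inequality. Take $\mathcal C$ to be the class of all disconnected graphs; it is closed under attaching pendant edges. For every $t\ge 2$ we have $\alpha(t)=t-2$, attained by $K_{t-1}\cup K_1$. Now take $G=K_{t-1}\cup K_1$ with $v$ the isolated vertex. Then $A(G)-I_{vv}=A(K_{t-1})\oplus(-1)$, so $\beta(t)=t-2=\alpha(t)$.

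The paper gets past this step by bounding, when there are $q\ge 2$ components, each component not containing $v$ by $\rho_1(G_i)\le\alpha(|V(G_i)|)<\alpha(t)$. This silently uses that every component of a graph in $\mathcal C$ again lies in $\mathcal C$. That holds for the classes used later (connected graphs, trees, hereditary classes such as outerplanar or bipartite graphs), but it does not follow from pendant extension. So you should state this hypothesis explicitly, or restrict to classes of connected graphs.

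With that hypothesis, the disconnected case closes as follows:
\begin{itemize}
\item For the component $G_1\ni v$, your connected argument gives $\sigma_1(A(G_1)-I_{vv})<\rho_1(G_1)\le\rho_1(G)\le\alpha(t)$.
\item For $i\ge 2$, $G_i\in\mathcal C$ has fewer than $t$ vertices, so $\rho_1(G_i)\le\alpha(|V(G_i)|)<\alpha(t)$.
\end{itemize}

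The strict monotonicity of $\alpha$ used in the second item must be available before $\alpha(t)>\beta(t)$ is proved; otherwise your chaining is circular. Your second step already supplies it, since $\alpha(t+1)\ge\rho_1(H')\ge\sigma_1(A(H')-I_{pp})>\alpha(t)$. So prove $\beta(t+1)>\alpha(t)$ first, deduce that $\alpha$ is strictly increasing, and only then run the componentwise argument.
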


\begin{proof}
First we show that $\alpha(r+1)>\alpha(r)$.
Let $H\in\mathcal C$ be a graph of order $r$ such that $\rho_1(H)=\alpha(r)$.
Choose a connected component $H_0$ of $H$ such that
$\rho_1(H_0)=\rho_1(H)=\alpha(r)$.
By the pendant-extension property,  attaching a new pendant vertex $z$ to a vertex
$u\in V(H_0)$ produces a graph $H'$ still belongs to $\mathcal C$.
Let $H'_0$ be the connected component of $H'$ containing $H_0\cup\{z\}$.
Then $H_0$ is a proper subgraph of the connected graph $H'_0$. 
By  Lemma~\ref{lem:proper_subgraph_spectral_radius},  $\rho_1(H_0')>\rho_1(H_0)=\alpha(r)$.
Therefore $\alpha(r+1)\geq \rho_1(H')>\alpha(r)$.

Next, we prove $\beta(r+1)>\alpha(r)$.
Let ${\mathbf x}>{\mathbf 0}$ be the unit Perron vector of $A({H_0})$ corresponding to $\rho_1(H_0)=\alpha(r)$.
For $\varepsilon>0$, define a vector ${\mathbf y}$ on $V(H_0)\cup\{z\}$ by
$ {\mathbf y}|_{V(H_0)}= {\mathbf x}$ and $y_z=\varepsilon$.
 Hence
$$  {\mathbf y}^{\top}(A({H'_0})-I_{zz}){\mathbf y}
    =
     {\mathbf x}^{\top}A({H_0}) {\mathbf x}+2\varepsilon x_u-\varepsilon^2
    =
    \alpha(r)+2\varepsilon x_u-\varepsilon^2.$$
Since $ {\mathbf y}^{\top} {\mathbf y}=1+\varepsilon^2$, we have
\[
    \frac{ {\mathbf y}^{\top}(A({H'_0})-I_{zz}) {\mathbf y}}{ {\mathbf y}^{\top} {\mathbf y}}
    =
    \frac{\alpha(r)+2\varepsilon x_u-\varepsilon^2}{1+\varepsilon^2}.
\]
Choosing $0<\varepsilon<2x_u/(\alpha(r)+1)$, we have
$$  \frac{ {\mathbf y}^{\top}(A({H'_0})-I_{zz}) {\mathbf y}}{ {\mathbf y}^{\top} {\mathbf y}}>\alpha(r).
$$
Thus $ \sigma_1(A({H'_0})-I_{zz})>\alpha(r).$
It follows that $ \beta(r+1)\geq \sigma_1(A({H'})-I_{zz})\geq \sigma_1(A({H'_0})-I_{zz})
    >\alpha(r).$

It remains to prove that $\alpha(r)>\beta(r)$. 
Let $G\in\mathcal C$ be a graph of order $r$ consisting of connected components $G_1,\ldots,G_q$, and choose $u\in V(G_1)$. 
Then 
$$A(G)-I_{uu}=(A({G_1})-I_{uu})\oplus A({G_2})\oplus\cdots\oplus A({G_q}).$$
If $q\geq 2$, then $\rho_1(G_i)\leq \alpha(|V(G_i)|)< \alpha(r).$
Since $G_1$ is connected, by Lemma~\ref{lem:Metzler matrix_PF_theorem}, the largest eigenvalue of $A({G_1})-I_{uu}$ has a positive unit eigenvector, denoted by ${\mathbf w}$. 
Hence 
\begin{equation}
\begin{aligned}\label{component-upper-bound}
\sigma_1(A({G_1})-I_{uu})={\mathbf w}^{\top}A({G_1}){\mathbf w} - w_u^2< {\mathbf w}^{\top}A({G_1}) {\mathbf w}\leq\rho_1(G_1) \leq\alpha(|V(G_1)|)\leq \alpha(r).
\end{aligned}
\end{equation}
Therefore every block of $A(G)-I_{uu}$ has largest eigenvalue strictly smaller than $\alpha(r)$, and hence $\sigma_1(A(G)-I_{uu}) < \alpha(r).$
Taking the maximum over all  pairs $(G,u)$ gives
   $ \beta(r)<\alpha(r).$
If $q=1$, then \eqref{component-upper-bound} still holds.
It follows that $ \beta(r)<\alpha(r)$ for any $q\geq 1$.
Combining $\alpha(r)>\beta(r)$ and $\beta(r+1)>\alpha(r)$, we obtain the desired result.
\end{proof}

\subsection{Some known results}
In this subsection, we show that several known results follow directly from our main theorems.
\begin{corollary}[{\cite[Theorem~2]{Hong1988adjacency_disconnected_even}}]
Let $G$ be a graph with $n$ vertices. 
If $n$ is even, then 
\begin{align*}    
\rho_2(G) \leq \frac{n-2}{2}. 
\end{align*}
Equality holds if and only if $G \cong K_{\frac{n}{2}} \cup K_{\frac{n}{2}}$.
\end{corollary}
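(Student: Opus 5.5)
This is Theorem~\ref{lem:adjacency_nodal_domain} applied to the class $\mathcal C$ of all graphs. For this class $\alpha(s)=\max\{\rho_1(H): |V(H)|=s\}=s-1$, attained only by $H\cong K_s$. The upper bound holds because $\rho_1(H)\le \Delta(H)\le s-1$. Uniqueness follows from the equality case $\rho_1(H)=s-1$: it forces $H$ to be regular of degree $s-1$ on a component of order $s$, hence $H\cong K_s$. Alternatively, every graph on $s$ vertices is a subgraph of $K_s$, so Lemma~\ref{lem:proper_subgraph_spectral_radius} applied to the connected graph $K_s$ gives strictness for proper subgraphs. In particular $\alpha(s)=s-1$ is strictly increasing, so the monotonicity hypothesis of Theorem~\ref{lem:adjacency_nodal_domain} holds. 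The class of all graphs also has the pendant-extension property, so Lemma~\ref{lem:alpha_beta_weak_pendant_monotonicity} gives the same monotonicity.

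With $n$ even, Theorem~\ref{lem:adjacency_nodal_domain} gives
\[
\rho_2(G)\le \alpha(n/2)=\frac{n}{2}-1=\frac{n-2}{2}.
\]
By part (1) of that theorem, equality holds if and only if $G\cong G_1^*(n/2)\cup G_2^*(n/2)$ with $\rho_1(G_i^*(n/2))=\alpha(n/2)$. Since $\alpha(n/2)$ is attained only by $K_{n/2}$, both parts are $K_{n/2}$, so $G\cong K_{n/2}\cup K_{n/2}$. Conversely, the spectrum of $K_{n/2}\cup K_{n/2}$ contains $\frac{n-2}{2}$ twice, so $\rho_2=\frac{n-2}{2}$.

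The only step needing care is the uniqueness of the maximizer of $\rho_1$ among graphs of order $s$, including disconnected ones. A disconnected graph of order $s$ has every component of order less than $s$, so its spectral radius is at most $s-2$ by the degree bound. For connected graphs, strict monotonicity of the spectral radius under taking proper subgraphs of $K_s$ gives uniqueness. The degenerate case $n=2$ is immediate and consistent: $\rho_2\le 0$, with equality for $2K_1$.
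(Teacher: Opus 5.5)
Your proposal is correct and matches the paper's proof: both apply Theorem~\ref{lem:adjacency_nodal_domain} to the class of all graphs, where $\alpha(s)=s-1$ is strictly increasing and attained only by $K_s$, and read off the bound and the equality case from part~(1). The paper simply asserts that $K_s$ is the unique maximizer of $\rho_1$ among graphs of order $s$; you add the justification (degree bound for disconnected graphs, and Lemma~\ref{lem:proper_subgraph_spectral_radius} for proper spanning subgraphs of $K_s$) together with the check of the converse and of the case $n=2$.
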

\begin{proof}    
Since the complete graph uniquely attains the maximum spectral radius among all graphs of a given order, and $\rho_1(K_s)=s-1< \rho_1(K_t)=t-1$ for $s< t$, the corollary follows immediately from Theorem \ref{lem:adjacency_nodal_domain}.
\end{proof}

\begin{corollary}[{\cite[Theorem~2.9]{Zhai2012adjacency_connected_graph}}]
Let $G$ be a connected graph with $n$ vertices.
\begin{enumerate}[(1)]
\item If $n$ is odd, then $\rho_2(G) \leq \frac{n-3}{2}$.
Equality holds if and only if $G \cong 2K_{\frac{n-1}{2}} \bullet K_1$.
\item If $n$ is even, then $\rho_2(G) \leq \rho_2(2K_{\frac{n}{2}} + e)$.
Equality holds if and only if $G \cong 2K_{\frac{n}{2}} + e$.
\end{enumerate}
\end{corollary}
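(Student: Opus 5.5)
Both parts follow from the general results of Section~4 applied to $\mathcal C$, the class of connected graphs. For this class $\alpha(s)=s-1$, attained uniquely by $K_s$, so $\alpha$ is strictly increasing. The class also has the pendant-extension property, so Lemma~\ref{lem:alpha_beta_weak_pendant_monotonicity} gives $\beta(t)>\alpha(s)$ for all $s<t$. This places us within the hypotheses of both Theorem~\ref{lem:adjacency_nodal_domain} and Theorem~\ref{thm:connected_even_bridge}.

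For part (1), with $n$ odd, I will use the connected clause of Theorem~\ref{lem:adjacency_nodal_domain}. It gives $\rho_2(G)\le\alpha(\lfloor n/2\rfloor)=\frac{n-1}{2}-1=\frac{n-3}{2}$. Equality holds exactly when $G=G_1^*\bullet K_1\bullet G_2^*$ with each $G_i^*$ attaining $\alpha(\frac{n-1}{2})$, that is, $G_i^*\cong K_{\frac{n-1}{2}}$. Any such graph is connected provided the extra vertex has at least one neighbour in each clique, and it then lies in $\mathcal C$. So the extremal graphs are exactly $2K_{\frac{n-1}{2}}\bullet K_1$, read as the set of such joins, which matches the paper's $\bullet$ notation.

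For part (2), with $n$ even, I will apply Theorem~\ref{thm:connected_even_bridge}. The main step is to identify $\beta(n/2)$ and show that it is attained by a unique pair. For a graph $G$ on $t$ vertices and any $v$, the matrix $A(G)-I_{vv}$ is entrywise dominated by $A(K_t)-I_{vv}$. Perron--Frobenius monotonicity for irreducible Metzler matrices, via Lemma~\ref{lem:Metzler matrix_PF_theorem} after shifting by $I$, then gives $\sigma_1(A(G)-I_{vv})\le\sigma_1(A(K_t)-I_{vv})$. When $G\ne K_t$ and $G$ is connected, adding a missing edge strictly increases $\sigma_1$, because the Rayleigh quotient at the positive eigenvector strictly increases. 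Since $K_t$ is vertex-transitive, the choice of $v$ is irrelevant. Hence the maximizing pair $(K_{n/2},u^*)$ is unique up to isomorphism.

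Theorem~\ref{thm:connected_even_bridge} then yields $\rho_2(G)\le\beta(n/2)$, with equality if and only if $G\cong 2K_{n/2}+u_1u_2$, which is $2K_{\frac n2}+e$. This graph is connected, so it lies in $\mathcal C$. The same theorem shows $\beta(n/2)=\rho_2(2K_{n/2}+e)$, which is the asserted bound.

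The only point needing care is the strict monotonicity that gives uniqueness. An edge $xy$ added to $G[\,\cdot\,]$ raises the quotient by $2w_xw_y>0$ at the positive Perron vector $w$ of $A(G)-I_{vv}$. For disconnected $G$, strictness follows from the block argument already used in the proof of Lemma~\ref{lem:alpha_beta_weak_pendant_monotonicity}.
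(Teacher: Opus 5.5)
Your proposal is correct and follows the paper's proof. Both take $\mathcal C$ to be the class of connected graphs, use Lemma~\ref{lem:alpha_beta_weak_pendant_monotonicity} for the monotonicity hypotheses, and apply Theorem~\ref{lem:adjacency_nodal_domain} for odd $n$ and Theorem~\ref{thm:connected_even_bridge} for even $n$; your additional check that $\beta(n/2)$ is attained only by $(K_{n/2},v)$, via the edge-addition Rayleigh-quotient argument, supplies the uniqueness hypothesis of Theorem~\ref{thm:connected_even_bridge} that the paper's one-line proof leaves implicit.
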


\begin{proof}
Let $\mathcal C$ be the class of connected graphs, which satisfies pendant-extension property.
By Lemma~\ref{lem:alpha_beta_weak_pendant_monotonicity}, for any positive integers $s<t$, we have $\alpha(t)>\beta(t)>\alpha(s)>\beta(s)$.
If $n$ is odd, then the corollary follows immediately from Theorem~\ref{lem:adjacency_nodal_domain}. 
If $n$ is even, then the corollary follows immediately from Theorem~\ref{thm:connected_even_bridge}.
\end{proof}

\begin{corollary}[\cite{Powers1988adjacency_bipartite_graph}]
If $G$ is a bipartite graph with $n$ vertices and $k = \lfloor \frac{n}{4}\rfloor$, then
\begin{align*}    
\rho_2(G) \leq    
\begin{cases}        
k, & \text{if } n = 4k \text{ or } 4k + 1, \\        
\sqrt{k(k+1)}, & \text{if } n = 4k + 2 \text{ or } 4k + 3.    
\end{cases}
\end{align*}
For $n = 4k + r$, where $r \in \{0,1,2,3\}$,
$K_{k, k+\lfloor r/2 \rfloor} \cup K_{k, k+\lceil r/2 \rceil}$ is an extremal graph.
\end{corollary}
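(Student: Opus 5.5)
The plan is to apply Theorem~\ref{lem:adjacency_nodal_domain} with $\mathcal C$ the class of all bipartite graphs, and then compute $\alpha(s)$ explicitly. The class of bipartite graphs has the pendant-extension property, so by Lemma~\ref{lem:alpha_beta_weak_pendant_monotonicity} the function $\alpha(s)$ is strictly increasing in $s$. That is exactly the hypothesis of Theorem~\ref{lem:adjacency_nodal_domain}, which then gives $\rho_2(G)\le \alpha(\lfloor n/2\rfloor)$.

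The key step is to identify $\alpha(s)$ for bipartite graphs of order $s$. I would show that
\[
\alpha(s)=\sqrt{\lfloor s/2\rfloor\lceil s/2\rceil},
\]
attained by $K_{\lfloor s/2\rfloor,\lceil s/2\rceil}$. The argument goes as follows.
\begin{itemize}
\item A bipartite graph with parts of sizes $a,b$, where $a+b=s$, is a subgraph of $K_{a,b}$.
\item By monotonicity of the spectral radius under taking subgraphs (Lemma~\ref{lem:proper_subgraph_spectral_radius}, or simply Perron--Frobenius for nonnegative matrices), its spectral radius is at most $\rho_1(K_{a,b})=\sqrt{ab}$.
\item The product $ab$ is maximized under $a+b=s$ by the balanced split.
\end{itemize}

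Next, with $s=\lfloor n/2\rfloor$, I check the four residues $n=4k+r$:
\begin{itemize}
\item For $r=0,1$, we get $s=2k$, so $\alpha(s)=k$.
\item For $r=2,3$, we get $s=2k+1$, so $\alpha(s)=\sqrt{k(k+1)}$.
\end{itemize}
This matches the stated bound.

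For the extremal graphs, I verify directly that $K_{k,k+\lfloor r/2\rfloor}\cup K_{k,k+\lceil r/2\rceil}$ attains the bound. Its spectrum is the union of the two component spectra, so $\rho_2$ equals the smaller of the two spectral radii, namely $\sqrt{k(k+\lfloor r/2\rfloor)}$. Running through the cases $r=0,1,2,3$, this agrees with the bound in each one. Alternatively, this also follows from the "in particular" clause of Theorem~\ref{lem:adjacency_nodal_domain}.

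The only mild obstacle is checking that $K_{k,k+\lfloor r/2\rfloor}$ has order $\lfloor n/2\rfloor$ and the other piece has order $\lceil n/2\rceil$. The totals are $2k+2\lfloor r/2\rfloor$ and $2k+2\lceil r/2\rceil$, and they sum to $n$ correctly in each case. The bound itself then follows immediately, and the proof reduces to this short computation.
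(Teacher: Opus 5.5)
Your proposal is correct and is essentially the paper's argument: take $\mathcal C$ to be the bipartite graphs, identify $\alpha(s)=\sqrt{\lfloor s/2\rfloor\lceil s/2\rceil}$ (attained by the balanced complete bipartite graph), and invoke Theorem~\ref{lem:adjacency_nodal_domain}. The paper reads strict monotonicity of $\alpha$ directly off this formula rather than using the pendant-extension lemma, which makes no real difference; one arithmetic slip to fix is that the two components have orders $2k+\lfloor r/2\rfloor$ and $2k+\lceil r/2\rceil$ (not $2k+2\lfloor r/2\rfloor$ and $2k+2\lceil r/2\rceil$), and it is these corrected values that sum to $n$ and equal $\lfloor n/2\rfloor$ and $\lceil n/2\rceil$.
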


\begin{proof}    
Since the complete balanced bipartite graph uniquely attains the maximum spectral radius among all graphs of a given order, and $\rho_1(K_{\lfloor \frac{s}{2}\rfloor,\lceil\frac{s}{2}\rceil} )= \sqrt{\lfloor \frac{s}{2}\rfloor\lceil\frac{s}{2}\rceil}< \rho_1(K_{\lfloor \frac{t}{2}\rfloor,\lceil\frac{t}{2}\rceil} )= \sqrt{\lfloor \frac{t}{2}\rfloor\lceil\frac{t}{2}\rceil}$ for $s< t$, the corollary follows immediately from Theorem \ref{lem:adjacency_nodal_domain}.
\end{proof}

Let $S^k_{a,b}$ denote the tree obtained from two disjoint stars $K_{1,a}$ and $K_{1,b}$ by joining a path of length $k-1$ between their centers. 
\begin{corollary}[{\cite[Theorem~4.7]{Neumaier1982adjacency_odd_tree}, \cite{Shao1995adjacency_even_tree}}]
Let $T$ be a tree with $n$ vertices. 
Then the following statements hold.
\begin{enumerate}[(\roman*)]
\item If $n$ is odd, then $\rho_2(T) \leq \sqrt{\frac{n-3}{2}}$.
Equality holds if and only if $T$ is isomorphic to one of $S^3_{\frac{n-3}{2}, \frac{n-3}{2}}$, $S^4_{\frac{n-5}{2}, \frac{n-3}{2}}$, $S^5_{\frac{n-5}{2}, \frac{n-5}{2}}$.

\item If $n$ is even, then $\rho_2(T) \leq \rho_2(S^4_{\frac{n-4}{2},\frac{n-4}{2}})$.
Equality holds if and only if $T \cong S^4_{\frac{n-4}{2},\frac{n-4}{2}}$.
\end{enumerate}
\end{corollary}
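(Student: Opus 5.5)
Take $\mathcal C$ to be the class of all trees (or forests; see below) and apply Theorem~\ref{lem:adjacency_nodal_domain} for odd $n$ and Theorem~\ref{thm:connected_even_bridge} for even $n$. Among trees of order $s$ the star $K_{1,s-1}$ uniquely maximizes the spectral radius, so $\alpha(s)=\sqrt{s-1}$, which is strictly increasing. Trees are not closed under disjoint union, but they are closed under attaching a pendant edge. The proof of Lemma~\ref{lem:alpha_beta_weak_pendant_monotonicity} only attaches pendant vertices to one component, so it goes through for trees, and we get $\alpha(t)>\beta(t)>\alpha(s)>\beta(s)$ for $s<t$. If that lemma needs the class to contain the graphs it builds, I would instead work in the class of forests, which has the pendant-extension property and the same $\alpha$ and $\beta$ on connected pieces, and only at the end restrict to connected $G$, that is, trees.

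\emph{Odd $n$.} By Theorem~\ref{lem:adjacency_nodal_domain}, $\rho_2(T)\le\alpha(\frac{n-1}{2})=\sqrt{\frac{n-3}{2}}$. In the connected case, equality holds exactly when $T=G_1^*\bullet K_1\bullet G_2^*$ with $G_1^*\cong G_2^*\cong K_{1,\frac{n-3}{2}}$, and $T$ a tree. For $T$ to be a tree, the extra vertex $z$ must be joined to exactly one vertex of each star. Each choice is either the star's center or one of its leaves, which gives three trees:
\begin{itemize}
\item center--center gives $S^3_{\frac{n-3}{2},\frac{n-3}{2}}$;
\item center--leaf gives $S^4_{\frac{n-5}{2},\frac{n-3}{2}}$, since the chosen leaf becomes a path vertex and its star loses one leaf;
\item leaf--leaf gives $S^5_{\frac{n-5}{2},\frac{n-5}{2}}$.
\end{itemize}
I need to check that the stated degenerate small cases are consistent, i.e.\ which $S^k_{a,b}$ with $a=0$ are allowed.

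\emph{Even $n$.} Theorem~\ref{thm:connected_even_bridge} gives $\rho_2(T)\le\beta(\frac n2)$, with equality exactly for $2G^*+u_1u_2$. The main work, and the main obstacle, is determining the pair $(G^*(t),u^*)$ maximizing $\sigma_1(A(G)-I_{vv})$ over trees of order $t=\frac n2$, and showing it is unique. The candidate is the star $K_{1,t-1}$ with $v$ a leaf. The resulting graph $2K_{1,t-1}+u_1u_2$, joining two leaves, is exactly $S^4_{\frac{n-4}{2},\frac{n-4}{2}}$, as required.

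To prove optimality, I would first show that the maximizing $G$ is connected, using the block argument in Lemma~\ref{lem:alpha_beta_weak_pendant_monotonicity}. Then I would use the fact, proved at the start of Theorem~\ref{thm:connected_even_bridge}, that $u^*$ minimizes the Perron entry $x$. For the tree, I would apply the standard Kelmans-type transfer: if some vertex other than a designated center has non-leaf neighbours, move them onto the vertex with larger $x$-entry. This strictly increases the Rayleigh quotient $\mathbf x^\top(A-I_{u^*u^*})\mathbf x$, because moving a neighbour $w$ from $a$ to $b$ changes it by $2x_w(x_b-x_a)>0$. The $-I_{u^*u^*}$ term is unaffected as long as $u^*$ is not moved. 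Repeating the transfer yields a star. Among vertices of a star, penalizing a leaf gives a larger $\sigma_1$ than penalizing the center, by the same minimal-entry argument, since leaf entries of the Perron vector are smaller than the center's. For equality I would use strictness of the Rayleigh-quotient increase, with $\mathbf x>0$ by Lemma~\ref{lem:Metzler matrix_PF_theorem}. This gives uniqueness of $(K_{1,t-1},\text{leaf})$ up to isomorphism. For small $t$ (for example $t\le 2$) I would check the claim directly. Theorem~\ref{thm:connected_even_bridge} then yields statement (ii).
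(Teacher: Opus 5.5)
Your proposal is correct and follows the paper's overall route. The class of trees has the pendant-extension property, so Lemma~\ref{lem:alpha_beta_weak_pendant_monotonicity} applies directly and your fallback to forests is unnecessary. As in the paper, $\alpha(s)=\sqrt{s-1}$ with the star as unique maximizer. Odd $n$ is handled by Theorem~\ref{lem:adjacency_nodal_domain} together with the center/leaf gluing count. Even $n$ is handled by Theorem~\ref{thm:connected_even_bridge} with the pair $(K_{1,t-1},\text{leaf})$.

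The only genuine difference is how you identify the unique maximizing pair for $\beta(t)$.

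\emph{The paper's argument.} It does this in one shot. Take $v_1$ of maximal and $v_{t-1}$ of minimal Perron entry, and assign each of the $t-1$ tree edges to its endpoint farther from $v_1$. This gives
$2\sum_{zw\in E}x_zx_w-x_v^2\le 2x_{v_1}\sum_{z\ne v_1}x_z-x_{v_{t-1}}^2=\mathbf{x}^{\top}(A(K_{1,t-1})-I_{v'v'})\mathbf{x}$.
That single inequality settles both the shape (a star) and the position of the penalty (a leaf).

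\emph{Your argument.} You run Kelmans-type transfers toward the max-entry vertex $c$, then compare center against leaf via the min-entry property established at the start of Theorem~\ref{thm:connected_even_bridge}. This is more portable to classes where no clean one-shot bound is available, but it needs more bookkeeping:
\begin{itemize}
\item \emph{Keeping a tree.} Each move must preserve tree structure, for example replacing $wa$ by $wc$ when $w$ is at distance two from $c$ through $a$.
\item \emph{Strictness under ties.} When $x_a=x_c$, strictness cannot come from the increment $2x_w(x_c-x_a)$. It must come from the eigen-equation: if $\sigma_1$ did not increase, $\mathbf{x}$ would be an eigenvector of the new matrix, which contradicts the equation at $a$, since $a$ lost the neighbour $w$ with $x_w>0$.
\item \emph{The case $t=3$.} The center-versus-leaf step has the same issue. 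For $K_{1,2}$ penalized at its center, the Perron vector is constant and $\sigma_1=1$, so your justification that ``leaf entries are smaller than the center's'' is false there. You need the eigenvector argument, or the direct check that $x^3+x^2-2x-1$ has a root exceeding $1$, to get $\beta(3)>1$ with the leaf as the unique maximizer.
\end{itemize}
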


\begin{proof}
Let $\mathcal C$ be the class of trees, which satisfies pendant-extension property.  
By Lemma~\ref{lem:alpha_beta_weak_pendant_monotonicity}, for any positive integers $s<t$, we have $\alpha(t)>\beta(t)>\alpha(s)>\beta(s)$. 

For odd $n$, write $n=2t+1$. 
By Theorem~\ref{lem:adjacency_nodal_domain}, we have
\begin{align*}
    \rho_2(T)\leq \alpha(t)=\rho_1(K_{1,t-1})=\sqrt{t-1}
    =\sqrt{\frac{n-3}{2}}.
\end{align*}
The star uniquely attains the maximum spectral radius among all graphs of a given order by~\cite{lovasz1973eigenvalues}. 
Theorem~\ref{lem:adjacency_nodal_domain} shows that an extremal tree is obtained from two copies of $K_{1,t-1}$ by gluing them through a vertex $v^*$ with $f(v^*)=0$.
 To  ensure that the extremal graph remains a tree,  $v^*$ is adjacent to exactly one vertex in each copy of $K_{1,t-1}$.
Based on specific way of gluing (two attachment vertices are centers or leaves), we obtain exactly three extremal trees $S^3_{t-1,t-1}$, $S^4_{t-2,t-1}$, $S^5_{t-2,t-2}$.

For even $n$, write $n=2t$. 
We claim that, for a leaf $v'$ of $K_{1,t-1}$, we have $\beta(t)=\sigma_1(A({K_{1,t-1}})-I_{v'v'})$.
Let $T$ be a tree of order $t$, $v\in V(T)$, and let $\mathbf{x}$ be a positive unit eigenvector of $A(T)-I_{vv}$ corresponding to $\sigma_1(A(T)-I_{vv})$. 
Choose vertices $v_1$ and $v_{t-1}$ such that $x_{v_1}=\max\limits_{z\in V(T)}x_z$ and $x_{v_{t-1}}=\min\limits_{z\in V(T)}x_z$.
By Rayleigh quotient,
\begin{align*}
    \sigma_1(A(T)-I_{vv})
    &=\mathbf{x}^{\top}(A(T)-I_{vv})\mathbf{x}  =2\sum_{z\sim w}x_zx_w-x_v^2 
    \leq 2x_{v_1}\sum_{z\neq v_1}x_z-x_{v_{t-1}}^2  \\
    &=\mathbf{x}^{\top}(A({K_{1,t-1}})-I_{v'v'})\mathbf{x} \leq \sigma_1(A({K_{1,t-1}})-I_{v'v'}).
\end{align*}
The equality forces $x_v=x_{v_{t-1}}$ and $x_w=x_{v_1}$ for every $w\neq v_1$. 
Then $v_1 \sim w$ for every $w\neq v_1$. 
Therefore $T\cong K_{1,t-1}$ and $v'$ is a leaf of $K_{1,t-1}$.
By Theorem~\ref{thm:connected_even_bridge} and the claim above, the extremal tree is obtained from two copies of the tree $K_{1,t-1}$ by joining their leaves, which implies that $T \cong S^4_{\frac{n-4}{2},\frac{n-4}{2}}$.
\end{proof}

\subsection{Outerplanar graphs}

Brooks, Gu, Hyatt, Linz and Lu~\cite{brooks2025maximum} considered the largest second adjacency eigenvalue of outerplanar graph and characterized the extremal graph for $n$ sufficiently large.
We extend their results by removing the condition that $n$ is sufficiently large.

Given two graphs $G_1$ and $G_2$,
$G_1\vee G_2$ denotes the graph obtained by adding all possible edges between $G_1$ and $G_2.$
The extremal outerplanar graph of adjacency spectral radius is shown as follows.
\begin{theorem}[\cite{lin2021complete}, Theorem 3]\label{thm:A_extremal graph}
Among all outerplanar graphs on $n$ vertices, $K_1 \vee P_{n-1}$ attains the maximum spectral radius, with the only exceptional case of $n=6$, in which $G_1$ attains the maximum spectral radius (see~\cref{fig:G1}).
\end{theorem}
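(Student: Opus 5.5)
The statement is a cited result (\cite{lin2021complete}, Theorem 3), so the paper will use it as a black box. Still, here is how I would prove it. Let $G$ be outerplanar of order $n$ attaining the maximum spectral radius. Let $\mathbf{x}$ be its unit Perron vector, and let $u$ be a vertex with $x_u=\max_v x_v$.

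\textbf{Step 1 (maximality).} Adding an edge strictly increases $\rho_1$ by Lemma~\ref{lem:proper_subgraph_spectral_radius}. Hence $G$ is connected and edge-maximal outerplanar, i.e. a triangulated polygon with $2n-3$ edges. In particular every vertex lies on the outer Hamiltonian cycle.

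\textbf{Step 2 ($u$ is dominating).} Suppose some $w\neq u$ is not adjacent to $u$. I would perform a Kelmans-type or edge-rotation operation: pick a neighbour $v$ of $w$ and replace the edge $vw$ by $uw$, choosing it so that outerplanarity is preserved. In a triangulated polygon one can rotate a chord of the triangulation toward $u$, and this keeps the graph outerplanar. The Rayleigh quotient changes by $2x_w(x_u-x_v)\ge 0$. Strict increase follows because $\mathbf{x}$ would otherwise remain an eigenvector of the new graph, which contradicts the eigen-equation at $v$ or $u$.

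Iterating this step, $u$ becomes adjacent to all other vertices. Then $G-u$ is outerplanar with $u$ dominating, which forces $G-u$ to be a linear forest. Maximality then gives $G-u\cong P_{n-1}$, so $G\cong K_1\vee P_{n-1}$.

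\textbf{Main obstacle.} The delicate point is Step 2. The rotation must preserve outerplanarity and must not decrease the Rayleigh quotient, and this genuinely fails for $n=6$, where a non-fan maximal outerplanar graph $G_1$ wins. So the argument cannot be purely local.

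\textbf{Comparison step.} I would therefore finish with an explicit comparison. Using $\rho_1\le$ bounds in terms of $x_u$, one estimates
\[
\rho_1^2 x_u=\sum_{v}(A^2)_{uv}x_v\le d_u x_u+\text{(2-walk terms)}.
\]
Combined with $e(G)=2n-3$, this gives $d_u\ge n-c$ for some constant $c$. Only $O(1)$ candidate structures then remain. These are compared to $K_1\vee P_{n-1}$ via characteristic polynomials, valid for $n\ge n_0$. The small cases $n<n_0$ are checked by exhausting maximal outerplanar graphs, which is where $n=6$ and $G_1$ appear.
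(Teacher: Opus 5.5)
Step~1 and your final deduction (a dominating $u$ forces $G\cong K_1\vee P_{n-1}$) are fine. Step~2, however, is false as stated, not merely delicate. If an outerplanarity-preserving switch $vw\to uw$ always existed, your argument would prove the theorem with no exception at $n=6$.

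The graph $G_1$ of \cref{fig:G1} shows what breaks. It is the $3$-sun: central triangle $b,c,e$ with ears $a,d,f$. Its spectral radius is $\rho_1(G_1)=1+\sqrt5$, and its Perron vector equals $1$ on $b,c,e$ and $\frac{\sqrt5-1}{2}$ on $a,d,f$. Take $u=e$ and $w=f$. Replacing $fb$ by $fe$ creates a $K_{2,3}$ with parts $\{e,c\}$ and $\{b,d,f\}$; replacing $fc$ is symmetric. The move that does stay outerplanar is flipping the diagonal $bc$ of the quadrilateral $e\,b\,f\,c$ into $ef$. But that deletes an edge not incident to $w$, so the quotient changes by $2(x_ex_f-x_bx_c)<0$. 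Indeed the flip produces $K_1\vee P_5$, with $\rho_1\approx 3.22<1+\sqrt5$.

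So Step~2 and the ``iterating'' conclusion must be discarded, and everything rests on your comparison step. That step points the right way: it is the strategy of \cite{lin2021complete}, from which the paper imports this theorem without proof, and which the paper replays for the Dirichlet variant in the proof of Theorem~\ref{thm:M_extremal graph}. As written, though, it omits every ingredient that makes it work.

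Set $x_u=1$, $A=N(u)$ and $B=V\setminus N[u]$. Then $\rho^2=d_u+\sum_{w\in A}d_A(w)x_w+\sum_{w\in B}d_A(w)x_w$. To reach $|B|\le 1$ you need three things.
\begin{enumerate}[(i)]
\item $G[A]$ is a linear forest, by $K_4$- and $K_{2,3}$-minor-freeness, so the middle sum is at most $2\rho$.
\item A structural bound on the components of $G[B]$ giving $\sum_{w\in B}d_A(w)x_w\le(5|B|-2)/\rho$. These are the bounds quoted from \cite{lin2021complete} as Claims~\ref{cla:minus-y1-B-components} and~\ref{cla:minus-y1-B-sum} in the proof of Theorem~\ref{thm:M_extremal graph}.
\item A lower bound $\rho\ge\sqrt n+1-O(1/n)$, obtained by testing $K_1\vee P_{n-1}$ against the Perron vector of $K_1\vee C_{n-1}$.
\end{enumerate}
Since $d_u=n-1-|B|$, the case $|B|\ge 2$ with $\rho>5$ gives $\rho^2-2\rho\le n-3+8/\rho$. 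Meanwhile (iii) gives $\rho^2-2\rho\ge n-1-O(1/\sqrt n)$, a contradiction for large $n$. The edge count $2n-3$ plays no role here.

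Even after $|B|\le1$, the candidates are not $O(1)$ in number. The extra vertex $z$ can sit on any edge $v_iv_{i+1}$ of the path $G[A]=v_1\cdots v_{n-2}$, so there are $\Theta(n)$ graphs, and no finite list of characteristic polynomials covers them. What closes the argument is the two-edge relocation $G-zv_i-zv_{i+1}+zu+zv_1$. It stays outerplanar and changes the quotient by $2x_z\bigl(1+x_{v_1}-x_{v_i}-x_{v_{i+1}}\bigr)$. This is positive by $\max_j x_{v_j}<1/(\rho-3)$ and $x_{v_1}\ge 1/\rho$, once $\rho>2+\sqrt7$.

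That threshold is why small $n$ must be checked separately, and it is exactly where $n=6$ escapes. In $G_1$, take $u=e$, so $G[A]$ is the path $a\,b\,c\,d$ and $z=f$ sits on $bc$. The change is $2x_f\bigl(1+\frac{\sqrt5-1}{2}-2\bigr)<0$.
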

\begin{figure}[htbp]
\centering
\begin{tikzpicture}[scale=1.1]
    \tikzstyle{v}=[circle, fill=black, inner sep=2.8pt]

    \node[v] (a) at (-2,0) {};
    \node[v] (b) at (-0.7,0) {};
    \node[v] (c) at (0.7,0) {};
    \node[v] (d) at (2,0) {};
    \node[v] (e) at (0,1.25) {};
    \node[v] (f) at (0,-1.1) {};

    \draw[line width=0.6pt] (a)--(b)--(c)--(d);
    \draw[line width=0.6pt] (e)--(a);
    \draw[line width=0.6pt] (e)--(b);
    \draw[line width=0.6pt] (e)--(c);
    \draw[line width=0.6pt] (e)--(d);
    \draw[line width=0.6pt] (f)--(b);
    \draw[line width=0.6pt] (f)--(c);
\end{tikzpicture}
\caption{The graph $G_1$.}
\label{fig:G1}
\end{figure}

First, we consider all odd integers $n \geq 3$.
\begin{theorem}  
Let $G$ be an outerplanar graph of order $n$.    
If $n\geq 3$ and $n\neq 13$ is odd, then     
\begin{align*}        
\rho_2(G) \leq \rho_2(2(K_1 \vee P_{\frac{n-3}{2}}) \bullet K_1).    
\end{align*}    
Equality holds if and only if $G \cong 2(K_1 \vee P_{\frac{n-3}{2}}) \bullet K_1$ and $2(K_1 \vee P_{\frac{n-3}{2}}) \bullet K_1$ is still an outerplanar graph.
For $n=13$, the extremal graph is of form  $2G_1\bullet K_1$ if $2G_1\bullet K_1$ is still an outerplanar graph.
\end{theorem}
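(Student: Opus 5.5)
The plan is to apply Theorem~\ref{lem:adjacency_nodal_domain} with $\mathcal C$ the class of outerplanar graphs, and then read off the extremal graphs using Theorem~\ref{thm:A_extremal graph}.

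\textbf{Monotonicity.} Attaching a pendant edge to a vertex of an outerplanar graph keeps it outerplanar. So $\mathcal C$ has the pendant-extension property, and by Lemma~\ref{lem:alpha_beta_weak_pendant_monotonicity}, $\alpha(s)$ is strictly increasing in $s$.

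\textbf{The bound.} Write $n=2t+1$, so $t=\frac{n-1}{2}$. Theorem~\ref{lem:adjacency_nodal_domain} gives
\[
\rho_2(G)\le \alpha(t).
\]
By Theorem~\ref{thm:A_extremal graph}, $\alpha(t)=\rho_1(K_1\vee P_{t-1})$, attained uniquely by $K_1\vee P_{t-1}=K_1\vee P_{\frac{n-3}{2}}$, unless $t=6$, i.e.\ $n=13$. In that case $\alpha(6)$ is attained uniquely by $G_1$. For small $t$ (for instance $t=1,2$) I will check the uniqueness of the maximizer by hand, since $K_1\vee P_{t-1}$ degenerates to $K_1$ or $K_2$.

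\textbf{Equality case.} First suppose the glued graph $H_t\bullet K_1\bullet H_t$ is outerplanar, where $H_t=K_1\vee P_{t-1}$ (or $G_1$ when $n=13$). The converse part of the proof of Theorem~\ref{lem:adjacency_nodal_domain} shows that $\rho_2$ of this graph equals $\alpha(t)$. The proof is a test vector $(c_1\mathbf u_1,c_2\mathbf u_2,0)$ together with interlacing, and it does not use connectedness of the two pieces beyond Perron vectors. Hence $\rho_2(H_t\bullet K_1\bullet H_t)=\alpha(t)$, and the bound is sharp.

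Conversely, if $\rho_2(G)=\alpha(t)$, then Theorem~\ref{lem:adjacency_nodal_domain} restricts $G$ to the listed forms.
\begin{itemize}
\item In the connected case, $G$ is $G_1^*(t)\bullet K_1\bullet G_2^*(t)$, and uniqueness of the maximizer forces both pieces to be $H_t$.
\item In the disconnected case, $G$ is $H_t\cup G_2(t+1)$ with $\rho_1(G_2)\ge\alpha(t)$.
\end{itemize}

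\textbf{Main obstacle.} The statement as written asserts equality only for the glued graph. So the disconnected case must be dealt with: either the intended class is connected outerplanar graphs, in which case that alternative is discarded, or the disconnected extremal graphs should be reported as well. I would state the theorem for connected $G$ and invoke the final "Moreover'' clause of Theorem~\ref{lem:adjacency_nodal_domain}.

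The genuinely delicate points are two. First, which vertex sets in $K_1\vee P_{t-1}$ can be joined to the extra vertex $K_1$ while preserving outerplanarity; this explains the hedge "if it is still outerplanar.'' Second, the $\bullet$ notation denotes a set of graphs, so the equality characterization is up to this choice of attachments. I would handle both by stating equality as "$G$ belongs to $2H_t\bullet K_1$ and is outerplanar,'' which follows directly from the theorem without further case analysis.
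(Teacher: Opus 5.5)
Your proposal is correct and follows the paper's proof. The paper likewise uses the pendant-extension property to get strict monotonicity of $\alpha$ via Lemma~\ref{lem:alpha_beta_weak_pendant_monotonicity}, then combines Theorem~\ref{lem:adjacency_nodal_domain} with the uniqueness in Theorem~\ref{thm:A_extremal graph} ($G_1$ replacing $K_1\vee P_5$ when $t=6$, i.e.\ $n=13$). You are also right that the stated equality characterization holds only for connected $G$: for instance, $2(K_1\vee P_{\frac{n-3}{2}})\cup K_1$ also attains equality, and the paper's one-line proof tacitly relies on the connected ``Moreover'' clause of Theorem~\ref{lem:adjacency_nodal_domain}, exactly as you propose.
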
  

\begin{proof}
It is clear that outerplanar graphs satisfy the pendant-extension property.
    By Theorem~\ref{lem:adjacency_nodal_domain} and Theorem~\ref{thm:A_extremal graph}, the result follows immediately.
\end{proof}

Next, to consider the even case, we need the following theorem about the extremal graph  for the largest adjacency eigenvalue with Dirichlet boundary condition.

\begin{theorem}\label{thm:M_extremal graph}
For $n\geq 24$, let $G$ be an outerplanar graph of order $n$ and $K_1 \vee P_{n-1}$ be the join with dominating vertex $v_1$ and path $P_{n-1}=v_2v_3\cdots v_n$.
    Then, for any $v \in V(G)$,
    \begin{align*}
        \sigma_1(A({K_1 \vee P_{n-1}})-I_{v_2v_2}) = \sigma_1(A({K_1 \vee P_{n-1}})-I_{v_nv_n}) \geq \sigma_1(A(G)-I_{v v}). 
    \end{align*}
    Equality holds if and only if $G \cong K_1 \vee P_{n-1}$ and $v=v_2$ or $v_n$.
\end{theorem}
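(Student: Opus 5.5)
The first equality needs no computation. The map $v_i \mapsto v_{n+2-i}$ for $i\geq 2$, fixing $v_1$, is an automorphism of $K_1\vee P_{n-1}$ sending $v_2$ to $v_n$. It therefore conjugates $A-I_{v_2v_2}$ into $A-I_{v_nv_n}$, so the two top eigenvalues coincide.

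For the inequality, fix an outerplanar $G$ of order $n$ and a vertex $v$, and set $M=A(G)-I_{vv}$. Write $\sigma=\sigma_1(M)$ and let $\mathbf x\ge 0$ be a unit eigenvector; if $G$ is connected, $\mathbf x>0$ by Lemma~\ref{lem:Metzler matrix_PF_theorem}. By Lemma~\ref{lem:proper_subgraph_spectral_radius} and the block argument used in the proof of Lemma~\ref{lem:alpha_beta_weak_pendant_monotonicity}, we may assume $G$ is connected and edge-maximal outerplanar: adding an edge inside the component that carries $\mathbf x$ strictly increases $\sigma$.

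The plan is then in three steps.

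\textbf{Step 1: a lower bound for the target.} Write $\beta^*=\sigma_1(A(K_1\vee P_{n-1})-I_{v_2v_2})$. Test it with the Perron vector of $K_1\vee P_{n-1}$ to get $\beta^*\ge \rho_1(K_1\vee P_{n-1})-y_{v_2}^2$. Here $y_{v_2}^2=O(1/n)$ and $\rho_1(K_1\vee P_{n-1})=\tfrac12+\sqrt{n-\tfrac34}+o(1)$ (the latter from \cite{lin2021complete}). This gives $\beta^*\ge \sqrt{n}+\tfrac12-O(1/\sqrt n)$.

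\textbf{Step 2: structure of a competitor with $\sigma\ge\beta^*$.} Using $\sigma\le\rho_1(G)$ and the known structural analysis of near-extremal outerplanar graphs in \cite{lin2021complete}, show that $G$ has a vertex $u$ with $x_u=\max x$ adjacent to all but $O(1)$ vertices. Take the eigen-equation $\sigma^2 x_u=\sum_{w\sim u}(Mx)_w$ and bound the contributions. The $M$-degree of $u$ is at most $n-1$. Since $G-u$ is a linear forest (outerplanarity), each $w$ has at most two neighbours besides $u$, so $(Mx)_w\le x_u+2\max x$, with the loss $x_v^2$ counted at $v$. These counts force $u$ to be dominating and $G-u$ to be a Hamiltonian path, i.e.\ $G\cong K_1\vee P_{n-1}$. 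I expect the threshold $n\ge 24$ to be exactly where these error terms (of order $1/\sqrt n$) become dominated.

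\textbf{Step 3: the position of $v$ in $K_1\vee P_{n-1}$.} By the argument at the start of the proof of Theorem~\ref{thm:connected_even_bridge}, any maximizing $v$ must satisfy $x_v=\min_w x_w$. If $v=v_1$, then $\sigma\le\rho_1(P_{n-1})+O(1)<\beta^*$. For $v=v_i$ with $2\le i\le n$, compare $\sigma_1(A-I_{v_iv_i})$ with $\sigma_1(A-I_{v_2v_2})$ via Rayleigh quotients and a path-reflection argument. The path endpoints have the smallest Perron entries because they have degree $2$ rather than $3$. Using the eigenvector of $A-I_{v_iv_i}$ as a test vector, a strict inequality follows as in Lemma~\ref{lem:sigma}. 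Equality cases then reduce to $v\in\{v_2,v_n\}$.

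The main obstacle is Step 2: showing rigorously that the perturbation $-I_{vv}$ cannot make a non-extremal outerplanar graph overtake $K_1\vee P_{n-1}$. The first attempt will be the double-eigen-equation counting above. If it is too weak, fall back on a quantitative gap $\rho_1(K_1\vee P_{n-1})-\rho_1(G)\ge c/\sqrt n$ for every other outerplanar $G$. That gap exceeds the loss $x_v^2\le \max x^2=O(1/n)$ once $n\ge 24$, which would finish the argument.
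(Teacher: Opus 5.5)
Your outline follows the paper's route: a max-entry vertex $u$, double counting through the eigen-equation at $u$, the $K_{2,3}$-minor structure from Lin et al., then the min-entry property of an optimal $v$ and a path reversal. However, Step~2 cannot close as written, for two reasons.

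\textbf{The Step~1 asymptotic is wrong.} $\tfrac12+\sqrt{n-\tfrac34}$ is the spectral radius of $K_1$ joined to a perfect matching. The wheel $K_1\vee C_{n-1}$ has $\rho_1=1+\sqrt n$ exactly, and deleting one rim edge costs only $O(1/n)$, so $\rho_1(K_1\vee P_{n-1})=1+\sqrt n-O(1/n)$. The lost $\tfrac12$ is fatal. For $u\neq v$ the eigen-equations give $\sigma^2\le d_G(u)+2\sigma+\sum_{w\in B}d_A(w)x_w$, i.e.\ $(\sigma-1)^2\lesssim d_G(u)$ up to the $B$-term. So $\sigma\ge\sqrt n+\tfrac12$ only yields $d_G(u)\gtrsim n-\sqrt n$. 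You need $\sigma\ge\sqrt n+1-O(1/n)$; the paper gets $\sigma\ge L_2=\sqrt n+1-\frac{3}{2(n-\sqrt n)}$ by testing with the explicit Perron vector of the wheel. Your weak bound also cannot separate the case $v=v_1$, whose value is about $\sqrt n+\tfrac12$. Your estimate $\sigma\le\rho_1(P_{n-1})+O(1)$ there is false, since Weyl gives $\sigma\ge\rho_1-1$. The min-entry argument you already quote disposes of $v_1$ directly.

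\textbf{$G-u$ is not a linear forest.} Outerplanarity only makes $G[N(u)]$ a linear forest. A neighbour of $u$ can have many neighbours in $B=V\setminus N[u]$, so $(Mx)_w\le x_u+2\max x$ fails. Controlling $\sum_{w\in B}d_A(w)x_w$ is where the real work lies. The paper transfers from Lin et al.\ (using only outerplanarity and the maximality of $x_u$) that each component $B_i$ of $G[B]$ sees exactly two vertices of $A$ and $2e(G[B_i])+e(A,B_i)\le 4|B_i|-3$. Hence $\sum_{w\in B}d_A(w)x_w\le(5|B|-2)/\sigma$, and with $L_2$ this forces $|B|\le 1$. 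The case $|B|=1$ is then excluded by the explicit switch $G-bv_i-bv_{i+1}+bu+bv_1$. Neither ingredient is in your plan, and ``adjacent to all but $O(1)$ vertices'' is exactly what this counting must establish.

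\textbf{The fallback does not rescue this.} The claimed gap $\rho_1(K_1\vee P_{n-1})-\rho_1(G)\ge c/\sqrt n$ is false. Deleting a single rim edge $v_iv_{i+1}$ gives an outerplanar $G$ with gap at most $2y_{v_i}y_{v_{i+1}}=O(1/n)$, where $y$ is the unit Perron vector of $K_1\vee P_{n-1}$. Also, $\max x^2$ is not $O(1/n)$: the hub entry of the unit vector is about $1/\sqrt2$. What you would actually need is a gap exceeding $y_{v_2}^2\approx\frac{1}{2n}$ uniformly over all outerplanar $G\not\cong K_1\vee P_{n-1}$. That is a sharp stability statement no easier than the theorem itself.

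\textbf{What is sound.}
\begin{itemize}
\item The automorphism argument for the first equality.
\item The reduction to connected, edge-maximal $G$. This is a genuine simplification: once $u$ is shown dominating, a maximal outerplanar graph with a dominating vertex is a fan, which replaces the appeal to Lin et al.'s Claim~6.
\item Step~3, which is the paper's argument: the min-entry property, then delete $v_kv_{k+1}$ and add $v_2v_{k+1}$, with strictness from the eigen-equation at $v$. Note that the relevant vector there is the eigenvector of $A-I_{vv}$, not the Perron vector of $A$, so the degree-$2$ heuristic about endpoints is not what drives it.
\end{itemize}
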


\begin{proof}
For $n \geq 24$, let $(G,v)$ be the pair such that $\sigma_1(A(G)-I_{v v})$ attains the maximum among all outerplanar graphs of order $n$ and their vertices.
Let $\sigma = \sigma_1(A(G)-I_{v v})$. 
Based on~\cref{lem:Metzler matrix_PF_theorem}, let $\mathbf{x}$ be a positive eigenvector of $G$
corresponding to $\sigma$, scaled so that
$x_u=\max_{v\in V(G)} x_v=1$.
Let $A=N_G(u), B=V(G)\setminus (\{u\}\cup A)$, and $ d_G(u)=|A|$.
we shall prove that $B=\emptyset$ and that $G[A]$ is an induced path, which implies that $G\cong K_1\vee P_{n-1}$.

Let $L_1=\sqrt n+1-\frac{1}{n-\sqrt n}-\frac{n-\sqrt n}{2n}$ and $L_2=\sqrt n+1-\frac{3}{2(n-\sqrt n)}$. 

\begin{claim}\label{cla:minus-y1-lower-bound}
 $\sqrt n+1>\sigma\geq L_2 \ge L_1$.
\end{claim}

\begin{proof}
Let $\Gamma = K_1 \vee C_{n-1}$, where $C_{n-1}$ denotes a cycle on $n-1$ vertices. For $e \in E(C_{n-1})$, $K_1 \vee P_{n-1}$ is isomorphic to  $\Gamma-e$. 
For the upper bound, by Corollary \ref{cor:A-W_properties}, Lemma \ref{lem:proper_subgraph_spectral_radius}, and Theorem \ref{thm:A_extremal graph}, we have
$\sigma_1(A(G)-I_{vv})<\rho_1(G)\leq \rho_1(K_1\vee P_{n-1})<\rho_1(\Gamma)=\sqrt n+1$, where $\rho_1(\Gamma)$ has been calculated in~\cite{lin2021complete}. 

For the lower bound, let $\mathbf{y}=(y_1,y_2,\cdots,y_n)^\top$ be the Perron vector of $\Gamma$, where $y_1$ corresponds to the vertex of degree $n-1$.  
We have $y_1=\sqrt{\frac{n-\sqrt{n}}{2n}}$, and $y_i=\sqrt{\frac{1}{2(n-\sqrt{n})}}$ for $2 \leq i \leq n$, which were also computed in~\cite{lin2021complete}.
Then,
\begin{align*}
\sigma &\geq \sigma_1(A({K_1\vee P_{n-1}})-I_{v v}) \geq Y^\top(A({K_1\vee P_{n-1}})-I_{vv})Y 
= Y^\top(A({\Gamma})-I_{vv})Y-2y_2^2 \\
&\geq \sqrt{n}+1-2y_2^2-y_2^2 = \sqrt{n}+1-\frac{3}{2(n-\sqrt{n})}=L_2.
\end{align*}
By direct calculation, we have $L_2\geq L_1$.
\end{proof}

Next we recall the structure of $G[A]$ in~\cite{lin2021complete}. In fact, the proofs of Claims~2,~4 and~5 in \cite{lin2021complete} depend mainly on the outerplanarity of $G$ and the fact that $u$ has the largest eigenvector entry.  

\begin{claim}[\cite{lin2021complete}, Theorem 2, Claim 2]\label{cla:minus-y1-GA-paths}
$G[A]$ is a union of disjoint induced paths or an induced path. (In particular, we also view an isolated vertex in $G[A]$ as an induced path.)
\end{claim}

Let $S=\{w\in A:\ d_{G[A]}(w)=1\}.$ We want to show that $d_G(u)$ is close to $n-1$. 
As the first step, we associate $d_G(u)$ with $\sigma$ as follows.

\begin{claim}\label{cla:minus-y1-sigma-square}
\begin{align*}
    \sigma^2 
    <
    \begin{cases}
    \displaystyle
    d_G(u)+\sigma+2-\frac{2}{\sqrt n+1} +\sum_{w\in B}d_A(w)x_w,
    & \text{if } \ u = v; \\[1.2em]
    \displaystyle
    d_G(u)+2\sigma-\frac{2}{\sqrt n+2} +\sum_{w\in B}d_A(w)x_w,
    & \text{if } \ u \neq v.
    \end{cases}
\end{align*}

\end{claim}

\begin{proof}
If $u=v$, then the eigen-equation at $u$ gives $\sigma x_u = \sum_{w\in A}x_w-x_u$.
Since $x_u=1$, we have $\sigma = \sum_{w\in A}x_w-1$.
Then
\begin{align}\label{neq:sigma1}
        \sigma^2
        & = \sigma \left(\sum_{w\in A}x_w-1\right)                \le d_G(u)+\sum_{w\in A}d_A(w)x_w+
          \sum_{w\in B}d_A(w)x_w-\sigma.  
\end{align}
If $G[A]$ consists of isolated vertices, then $\sum_{w\in A}d_A(w)x_w=0$, and the desired bound is immediate. 
Otherwise, $G[A]$ contains at least one edge, and it follows that $|S|\geq 2$. 
For every $w\in S$, $\sigma x_w > 1.$
By Claim~\ref{cla:minus-y1-lower-bound}, $\sigma<\sqrt n+1$, and hence
$ x_w > \frac{1}{\sigma}> \frac{1}{\sqrt n+1}.$
By Claim~\ref{cla:minus-y1-GA-paths},
\begin{align*}
        \sum_{w\in A}d_A(w)x_w
        \le 2\sum_{w\in A}x_w-\sum_{w\in S}x_w < 2(\sigma+1)-\frac{2}{\sqrt n+1}.
\end{align*}
Substituting this into (\ref{neq:sigma1}) proves $ \sigma^2 < d_G(u)+\sigma+2-\frac{2}{\sqrt n+1} +\sum_{w\in B}d_A(w)x_w$.

If $u \neq v$, then
the eigen-equation at $u$ gives $\sigma x_u = \sum_{w\in A}x_w$.
Since $x_u=1$, we have $\sigma = \sum_{w\in A}x_w$.
Then
\begin{align}\label{neq:sigma2}
        \sigma^2
        & = \sigma \sum_{w\in A}x_w                \le d_G(u)+\sum_{w\in A}d_A(w)x_w+
          \sum_{w\in B}d_A(w)x_w. 
\end{align}
If $G[A]$ consists of isolated vertices, then $\sum_{w\in A}d_A(w)x_w=0$, and the desired bound is immediate. 
Otherwise, $G[A]$ contains at least one edge, and it follows that $|S|\geq 2$. 
For every $w\in S$, $\sigma x_w > 1-x_w.$
By Claim~\ref{cla:minus-y1-lower-bound}, $\sigma<\sqrt n+1$, and hence
$ x_w > \frac{1}{\sigma+1}> \frac{1}{\sqrt n+2}.$
By Claim~\ref{cla:minus-y1-GA-paths},
\begin{align*}
        \sum_{w\in A}d_A(w)x_w
        \le 2\sum_{w\in A}x_w-\sum_{w\in S}x_w \le 2\sigma-\frac{2}{\sqrt n+2}.
\end{align*}
Substituting this into (\ref{neq:sigma2}) proves $ \sigma^2 < d_G(u)+2\sigma-\frac{2}{\sqrt n+2} +\sum_{w\in B}d_A(w)x_v$.

The proof is complete.
\end{proof}

We now show that $|B|\leq 1$. 
Suppose to the contrary that $|B|\ge 2$.
Let
$B_1,\cdots,B_t$ be the vertex sets of all components of $G[B]$.

\begin{claim}[\cite{lin2021complete}, Theorem 2, Claim 4]\label{cla:minus-y1-B-components}
For every $i\in\{1,\ldots,t\}$, $d_A(B_i)=2$. Moreover, if $|B_i|\ge 2$, then
$2e(G[B_i])+e(A,B_i)\le 4|B_i|-3.$
In particular, $2e(G[B])+e(A,B)\le 4|B|-3.$
\end{claim}

\begin{claim}[\cite{lin2021complete}, Theorem 2, Claim 5]\label{cla:minus-y1-B-sum}
If $|B|\ge 2$, for $i=1,2$, we have
\[
        \sum_{w\in B}d_A(w)x_w
        \leq \frac{5|B|-2}{\sigma}
        =\frac{5n-5d_G(u)-7}{\sigma}
        \le \frac{5n-5d_G(u)-7}{L_i}.
\]
\end{claim}

Combining Claims~\ref{cla:minus-y1-sigma-square} and~\ref{cla:minus-y1-B-sum}, we obtain, for $u=v$,
\begin{align}
        \left(1-\frac{5}{L_1}\right)d_G(u)
        &\ge L_1^2-L_1-2+\frac{2}{\sqrt n+1}-\frac{5n-7}{L_1},       \label{eq:minus-y1-du-bound1}
\end{align}
and for $u \neq v$,
\begin{align}
        \left(1-\frac{5}{L_2}\right)d_G(u)
        &\ge L_2^2-2L_2+\frac{2}{\sqrt n+2}-\frac{5n-7}{L_2}.       \label{eq:minus-y1-du-bound2}
\end{align}
We compare the right-hand side with $(n-3)(1-5/L_i)$ for $i=1,2$.

For $u = v$,
let $s=\sqrt n$,
$\varepsilon=\frac{s-3}{2s(s-1)}$ and  $L_1=s+1/2+\varepsilon.$
Then, 
\begin{align*}
&L_1^2-L_1-2+\frac{2}{s+1}-\frac{5s^2-7}{L_1}
      -(s^2-3)\left(1-\frac{5}{L_1}\right)                                      \\
&= L_1^2-L_1-s^2+1+\frac{2}{s+1}-\frac{8}{L_1}                                      \\
&= \frac{7s-15}{4(s-1)}+\varepsilon^2+\frac{2}{s+1}-\frac{8}{L_1}              \\
&> \frac{7s-15}{4(s-1)}+\frac{2}{s+1}-\frac{8}{s+1/2}                   \\
&= \frac{14s^3-57s^2-46s+41}{4(s-1)(s+1)(2s+1)}.
\end{align*}
Let
$q(s)=14s^3-57s^2-46s+41.$
For \(s\ge\sqrt{22}\), we have \(q'(s)=42s^2-114s-46>0\), and
$q(\sqrt{22})=262\sqrt{22}-1213>0$. Therefore $q(s)>0$ for all $s \ge \sqrt{22}$, i.e. for all $n\ge 22$.  

For $u \neq v$,
let $s=\sqrt n$,
$\varepsilon'=\frac{3}{2s(s-1)}$ and  $L_2=s+1-\varepsilon'$, where $\varepsilon' < \frac12 $ for $s>3$.
Then
\begin{align*}
&L_2^2-2L_2+\frac{2}{s+2}-\frac{5s^2-7}{L_2}
      -(s^2-3)\left(1-\frac{5}{L_2}\right)                                      \\
&= L_2^2-2L_2-s^2+3+\frac{2}{s+2}-\frac{8}{L_2}                                      \\
&= \frac{2s-5}{s-1}+(\varepsilon')^2+\frac{2}{s+2}-\frac{8}{L_2}              \\
&> \frac{2s-5}{s-1}+\frac{2}{s+2}-\frac{8}{s+1/2}                   \\
&= \frac{4s^3 - 12s^2 - 39s + 20}{(s-1)(s+2)(2s+1)}.
\end{align*}
Let
$p(s)=4s^3-12s^2-39s+20.$
For $s\ge\sqrt{24}$, we have $p'(s)=12s^2-24s-39>0$, and
$p(\sqrt{24})=57\sqrt{24}-268>0$.
Therefore $p(s)>0$ for all $s \ge \sqrt{24}$, i.e. for all $n\ge 24$.

For $i=1,2$, by (\ref{eq:minus-y1-du-bound1}) and (\ref{eq:minus-y1-du-bound2}), we have
\[
        \left(1-\frac{5}{L_i}\right)d_G(u)>
        (n-3)\left(1-\frac{5}{L_i}\right).
\]
For $s\geq \sqrt{23}$, we have $L_2 > L_1 > s+\frac{1}{2}>5$, which implies $d_G(u)>n-3$. This contradicts $|B|=n-1-d_G(u)\ge2$.                              
Therefore, we have $|B|\le 1$. 
Suppose that $|B|=1$.
At this point, we can know the same information
on $G[A]$ as Claim 6 in \cite{lin2021complete}.

\begin{claim}[\cite{lin2021complete}, Theorem 2, Claim 6]\label{an induced path}
$G[A]$ is an induced path.
\end{claim}

Finally, we show that, indeed, $B$ is an empty set.

\begin{claim}
\label{B is empty}
$B=\emptyset$.
\end{claim}

\begin{proof}

Suppose that $|B|=1$. Let $B=\{b\}$. Since $G$ is $K_{2,3}$‐free, we have $d_G(b)=d_A(b)=2$.
By Claim~\ref{an induced path},
we write $G[A]=v_1v_2\cdots v_{n-2}.$
Let $N_G(b)=\{v_i,v_j\}$ with $i<j$.
If $|i-j| \neq 1$, then $G$ contains a $K_{2,3}$-minor, a contradiction. 
Thus, $|i-j| = 1$. 
WLOG, set $j=i+1$.
Let $X = (x_u, x_1, \dots, x_{n-2}, x_b)^\top$ be the eigenvector corresponding to $\sigma$, where $x_u = 1$, $x_b$ corresponds to $b$, and $x_k$ corresponds to $v_k$ for $k = 1, \dots, n-2$. 
Set $x_s \coloneqq \max\{x_k : k = 1, \dots, n-2\}$. 
Since $\sigma x_b \le x_{v_i}+x_{v_{i+1}}\le 2x_s$ and $\sigma\ge L_1 >5$, we have
$x_b<x_s$. 
Consequently, $\sigma x_s\le 1+2x_s+x_b<1+3x_s$,
which gives $x_s<\frac{1}{\sigma-3}$.
Also, $v_1$ is adjacent to $u$, so
$\sigma x_{v_1} \ge x_u+x_{v_2}-x_{v_1}$, which gives $x_{v_1}>\frac{1}{\sigma+1}.$
Let $G'=G-bv_i-bv_{i+1}+bu+bv_1.$
Note that $G'$ is also outerplanar and the diagonal perturbation is still at $v$.
 Let $M_z(G)=A({G})-I_{zz}$.
By Rayleigh quotient, we have
\begin{align*}
        \sigma_1(M_v(G'))-\sigma
        &\ge \frac{\mathbf{x}^{\top}(M_v(G')-M_v(G))\mathbf{x}}{\mathbf{x}^{\top}\mathbf{x}} \\
        &=\frac{2x_b(1+x_{v_1}-x_{v_i}-x_{v_{i+1}})}{\mathbf{x}^{\top}\mathbf{x}}       \\
        &>\frac{2x_b}{\mathbf{x}^{\top}\mathbf{x}}
          \left(1+\frac{1}{\sigma+1}-\frac{2}{\sigma-3}\right)                               \\
        &=\frac{2x_b}{\mathbf{x}^{\top}\mathbf{x}}
          \cdot\frac{\sigma^2-3\sigma-8}{(\sigma+1)(\sigma-3)}.
\end{align*}
Since $\sigma\ge L_1>5>\frac{3+\sqrt{41}}{2}$ for $n \ge23$, we have $\frac{\sigma^2-3\sigma-8}{(\sigma+1)(\sigma-3)} >0$. Therefore $\sigma_1(M(G'))>\sigma$, a contradiction. This proves the claim.
\end{proof}

Combining all the claims, it follows that
$G\cong K_1\vee P_{n-1}.$


Now it suffices to show that $v$ is either $v_2$ or $v_n$.
Since $v_2$ and $v_n$ are symmetric,
$\sigma_1(M_{v_2}(G))=\sigma_1(M_{v_n}(G))$.
It remains to show that every other choice of $v$ gives a strictly smaller value.

Let $\mathbf{w}>0$ be a unit eigenvector corresponding to $\sigma_1(M_v(G))$.
We claim that $w_v\leq w_z$  for $z\in V(G)$.
Indeed, if there exists $z\neq v$ such that $w_z<w_v$, then
$M_z(G)=M_v(G)+I_{vv}-I_{zz}$. 
It follows that
$\sigma_1(M_z(G))
    \geq
    \mathbf{w}^{\top}M_z(G)\mathbf{w}
    =
    \mathbf{w}^{\top}M_v(G)\mathbf{w}+w_v^2-w_z^2
    >
    \sigma_1(M_v(G)),$
which contradicts the choice of $v$.

Assume first that $v=v_k$ with $3\leq k\leq n-1$. 
Then the claim above gives $w_v\leq w_{v_2}$. 
Construct a new path on the same vertex set by replacing $v_2v_3\cdots v_{k-1}v_kv_{k+1}\cdots v_n$ with $v_k v_{k-1}\cdots v_2 v_{k+1}\cdots v_n$.
Equivalently, delete the edge $v_k v_{k+1}$ and add the edge $v_2 v_{k+1}$,
while keeping $v_1$ adjacent to all path vertices. 
Let $G'$ be the resulting
graph and $M'_v=A({G'})-I_{vv}$. 
Then $G'\cong K_1\vee P_{n-1}$, and $v=v_k$ is an endpoint of the path in $G'$. 
Hence $\sigma_1(M_v(G'))=\sigma_1(M_{v_2}(G))$.
Moreover,
$\mathbf{w}^{\top}M_v(G')\mathbf{w}-\mathbf{w}^{\top}M_v(G)\mathbf{w}
=
2w_{v_{k+1}}(w_{v_2}-w_v)\geq 0.$
Thus
$        \sigma_1(M_{v_2}(G))
        =
        \sigma_1(M_v(G'))
        \geq
        \mathbf{w}^{\top}M_v(G')\mathbf{w}
        \geq
        \mathbf{w}^{\top}M_v(G)\mathbf{w}
        =
        \sigma_1(M_v(G)).$
We claim that the  inequality is strict. 
If the equality holds, then
$\mathbf{w}$ would also be a positive eigenvector of $M_v(G')$ corresponding to
$\sigma_1(M_v(G'))$. 
Notice that $\sigma_1(M_v(G))w_v = w_{v_1}+w_{v_{k-1}}+w_{v_{k+1}}-w_v,$  and $\sigma_1(M_v(G'))w_v= w_{v_1}+w_{v_{k-1}}-w_v.$
Since $\sigma_1(M_v(G'))=\sigma_1(M_v(G))$, this gives $w_{v_{k+1}}=0$, a
contradiction. 
Therefore $\sigma_1(M_{v_2}(G))>\sigma_1(M_v(G))$, which contradicts the choice of $v$.
Hence no path interior vertex can maximize $\sigma_1(M_v(G))$.

It remains to exclude $v=v_1$. 
Assume that $v=v_1$, and let $\mathbf{w}>0$ be a unit eigenvector corresponding to $\sigma_1(M_{v_1}(G))$. 
Recall that $w_{v_1}=w_v \leq w_z$ for $z\in V(G)$.
On the other hand, by the proof of $G \cong K_1 \vee P_{n-1}$ as above, we have $w_{v_1}\geq w_z$ for $z\in V(G)$.
Hence $w_{v_1}=w_z=c>0$ for any $z\in V(G)$.
For $n\geq 4$, $N_G(v_2)=\{v_1,v_3\}$, $N_G(v_3)=\{v_1,v_2,v_4\}.$
The eigen-equation $M_{v_1}(G)\mathbf{w}=\sigma_1(M_{v_1}(G))\mathbf{w}$  gives $\sigma_1(M_{v_1}(G))c=2c$ at $v_2$ and  $\sigma_1(M_{v_1}(G))c= 3c$ at $v_3$, which is impossible. 
Therefore $v\neq v_1$.

The proof is complete.
\end{proof}

We extend Theorem 2 in~\cite{brooks2025maximum} by proving that the same conclusion holds for all even integers $n \geq 48$.

\begin{theorem}
Let $G$ be a connected outerplanar graph with $n$ vertices. 
If $n\geq 48$ is even, then 
\begin{align*}
    \rho_2(G) \leq \rho_2\left(\{(K_1\vee P_{\frac{n-2}{2}})^{(1)}\cup(K_1\vee P_{\frac{n-2}{2}})^{(2)}\}+v_1^{(1)}v_1^{(2)}\right),
\end{align*}
where $(K_1\vee P_{\frac{n-2}{2}})^{(1)}$ and
$(K_1\vee P_{\frac{n-2}{2}})^{(2)}$ are two copies of
$K_1\vee P_{\frac{n-2}{2}}$, and $P_{\frac{n-2}{2}}^{(i)}=v_1^{(i)}v_2^{(i)}\cdots v_{\frac{n-2}{2}}^{(i)}$ for $i=1,2$. 
Equality holds if and only if, up to isomorphism,  $G\cong\{(K_1\vee P_{\frac{n-2}{2}})^{(1)}\cup (K_1\vee P_{\frac{n-2}{2}})^{(2)}\} +v_1^{(1)}v_1^{(2)}$.
\end{theorem}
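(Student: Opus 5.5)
The plan is to apply Theorem~\ref{thm:connected_even_bridge} with $\mathcal C$ the class of outerplanar graphs. Then we identify the unique extremal pair for $\beta(n/2)$ via Theorem~\ref{thm:M_extremal graph}.

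\textbf{Step 1: the hypothesis $\beta(t)>\alpha(s)$.} Outerplanar graphs satisfy the pendant-extension property, since attaching a pendant edge preserves outerplanarity. Lemma~\ref{lem:alpha_beta_weak_pendant_monotonicity} then gives $\beta(t)>\alpha(s)$ for all $s<t$, so this hypothesis of Theorem~\ref{thm:connected_even_bridge} holds.

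\textbf{Step 2: the extremal pair for $\beta(n/2)$.} Put $t=n/2\geq 24$. Theorem~\ref{thm:M_extremal graph} applies at order $t$, so the maximum defining $\beta(t)$ is attained only by $G^*=K_1\vee P_{t-1}$ with $u^*$ an endpoint of the path. The two endpoints $v_2$ and $v_t$ are exchanged by the reflection automorphism of $K_1\vee P_{t-1}$. Hence the extremal pair $(G^*(t),u^*)$ is unique up to isomorphism, as Theorem~\ref{thm:connected_even_bridge} requires. This threshold is exactly why $n\geq 48$ is needed.

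\textbf{Step 3: conclusion.} Theorem~\ref{thm:connected_even_bridge} gives $\rho_2(G)\leq \beta(n/2)$ for every connected outerplanar $G$ of order $n$. Equality holds iff $G\cong 2G^*(n/2)+u_1u_2$ and this graph is outerplanar. Here $u_i$ is an endpoint of the path $P_{(n-2)/2}^{(i)}$, which after relabeling is $v_1^{(i)}$, and $G^*(n/2)=K_1\vee P_{(n-2)/2}$.

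Outerplanarity of the bridged graph holds because joining two outerplanar graphs by a single edge between vertices on their outer faces is outerplanar, and every vertex of $K_1\vee P_{t-1}$ lies on the outer face. The equality case of Theorem~\ref{thm:connected_even_bridge} also identifies $\beta(n/2)$ with $\rho_2$ of this graph, which gives the stated bound.

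\textbf{Main obstacle.} The main point is the uniqueness hypothesis in Step~2. Theorem~\ref{thm:M_extremal graph} says the graph is unique and the vertex is $v_2$ or $v_n$, and one must check that these two choices give isomorphic rooted pairs; the reflection automorphism settles this. A second check is that the vertex labeling in Theorem~\ref{thm:M_extremal graph}, which uses $v_1$ for the dominating vertex, matches the statement's labeling, where $v_1^{(i)}$ is a path endpoint. This needs only a careful translation of notation.
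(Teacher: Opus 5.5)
Your proposal is correct and follows the paper's own proof. The paper applies Theorem~\ref{thm:connected_even_bridge} to the outerplanar class, taking the hypothesis from the pendant-extension property via Lemma~\ref{lem:alpha_beta_weak_pendant_monotonicity} and the unique extremal pair from Theorem~\ref{thm:M_extremal graph} at order $n/2\geq 24$; your extra checks (the reflection identifying the two path endpoints, outerplanarity of the bridged graph, and relabeling $v_1^{(i)}$ as a path endpoint) are details the paper leaves implicit.
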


\begin{proof}    
It is clear that outerplanar graphs satisfy the pendant-extension property.    
If $n$ is even, then by Theorem~\ref{thm:connected_even_bridge} and Theorem~\ref{thm:M_extremal graph}, the result follows immediately.
\end{proof}



\subsection{The proof of Aouchiche--Hansen Conjecture}
In this subsection, we resolve Aouchiche--Hansen's Conjecture.
First, we recall some classical estimates on spectral radius.

\begin{theorem}[\cite{Nikiforov2002}]\label{thm:Nikiforov_fixed_size}
Let $G$ be a $K_{r+1}$-free graph with $m$ edges. 
Then $\rho_1(G)\leq \sqrt{2m(1-\frac1r)}$.
\end{theorem}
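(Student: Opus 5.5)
We want $\rho_1(G)\le\sqrt{2m(1-1/r)}$ for a $K_{r+1}$-free graph $G$ with $m$ edges. The plan is to reduce to the largest eigenvalue's Perron vector, bound it by an edge sum, and control that edge sum through the $r$-partite structure forced by the Motzkin--Straus theorem.

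Let $\rho=\rho_1(G)$ and let $\mathbf x\ge 0$ be a unit Perron vector; its existence follows from Lemma~\ref{lem:Metzler matrix_PF_theorem} applied componentwise. The first step is to square the eigen-equation. From $\rho x_v=\sum_{u\sim v}x_u$ we get
\[
\rho^2 x_v=\sum_{u\sim v}\sum_{w\sim u}x_w .
\]
Since $\rho^2=\mathbf x^\top A^2\mathbf x$, the goal becomes bounding the quadratic form $\sum_{u}\bigl(\sum_{w\sim u}x_w\bigr)^2$ in terms of $m$.

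The route I would actually follow is Nikiforov's. Write
\[
\rho^2=\rho\sum_v x_v^2\cdot\rho=\Bigl(\sum_{uv\in E}2x_ux_v\Bigr)\rho .
\]
Then use the identity $\rho^2=\sum_{uv\in E}2x_ux_v\cdot\rho$ together with the Cauchy--Schwarz estimate
\[
\rho^2=\sum_v\Bigl(\sum_{u\sim v}x_u\Bigr)^2\le \sum_v d_v\sum_{u\sim v}x_u^2=\sum_{uv\in E}(x_u^2+x_v^2)\cdot(\cdot).
\]
A cleaner form of this is $\rho^2\le\sum_{uv\in E}(x_u+x_v)^2\cdot\tfrac12$, or the classical Stanley-type bound $\rho^2\le 2m\sum_{uv\in E}x_ux_v\cdot\ldots$.

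The key input is Motzkin--Straus. For a $K_{r+1}$-free graph and any nonnegative $\mathbf y$ with $\sum y_v=1$,
\[
\sum_{uv\in E}y_uy_v\le\frac12\Bigl(1-\frac1r\Bigr).
\]
Apply this with $y_v=x_v^2$, which satisfies $\sum_v y_v=1$. Separately, Cauchy--Schwarz over edges gives
\[
\rho=2\sum_{uv\in E}x_ux_v\le 2\sqrt{m\sum_{uv\in E}x_u^2x_v^2}\le 2\sqrt{m\cdot\tfrac12\bigl(1-\tfrac1r\bigr)}.
\]
Squaring yields $\rho^2\le 2m(1-1/r)$, as required.

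The only nontrivial ingredient is the Motzkin--Straus inequality itself, and that is where I expect the main obstacle. I would prove it by a standard shifting argument. Take a maximizer $\mathbf y$ of the Lagrangian $\sum_{uv\in E}y_uy_v$ over the simplex with minimal support. If two non-adjacent vertices both lie in the support, moving all the weight from one to the other keeps the value at least as large, contradicting minimality. So the support is a clique, and it has at most $r$ vertices. On a clique of size $k\le r$, the value is maximized by uniform weights, giving $\frac12(1-1/k)\le\frac12(1-1/r)$.
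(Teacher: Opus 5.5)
The paper gives no proof of this statement and simply cites Nikiforov; your final argument is correct and is essentially Nikiforov's own proof. It applies Motzkin--Straus to $y_v=x_v^2$ for a unit Perron vector $\mathbf x$, combines it with $\rho=2\sum_{uv\in E}x_ux_v\le 2\sqrt{m\sum_{uv\in E}x_u^2x_v^2}$, and proves Motzkin--Straus by the usual minimal-support shifting argument. You should delete the exploratory displays in your middle paragraph, since they are unused and some are incomplete; in particular, $\rho^2\le\frac12\sum_{uv\in E}(x_u+x_v)^2$ is false (for $K_n$ with $n\ge 3$ the right side is $n-1$ while $\rho^2=(n-1)^2$).
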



\begin{theorem}[{\cite{Hong1988mn}}]\label{thm:Hong}
   Let $G$ be a connected simple graph with $m$ edges and $n$ vertices.
Then 
\begin{align*}
    \rho_1(G)\leq \sqrt{2m-n+1}.
\end{align*}
Moreover, equality holds if and only if $G$ is isomorphic to one of the following two graphs:
\begin{enumerate}
    \item[(a)] the star $K_{1,n-1}$;
    \item[(b)] the complete graph $K_n$.
\end{enumerate}
\end{theorem}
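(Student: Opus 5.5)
The plan is the classical Perron-vector argument: bound $\rho_1^2 x_i^2$ vertex by vertex with Cauchy--Schwarz, then sum over $i$ and control the loss using connectivity.

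\textbf{Setup and local bound.} Let $\mathbf{x}>\mathbf{0}$ be the unit Perron vector of $A(G)$, which exists by Lemma~\ref{lem:Metzler matrix_PF_theorem} since $G$ is connected, and write $\rho=\rho_1(G)$ and $d_i=d_G(i)$. The eigen-equation gives $\rho x_i=\sum_{j\sim i}x_j$. Cauchy--Schwarz yields
\begin{align*}
\rho^2x_i^2\le d_i\sum_{j\sim i}x_j^2=d_i\Bigl(1-\sum_{j\notin N(i)}x_j^2\Bigr),
\end{align*}
where the sum $\sum_{j\notin N(i)}$ runs over all $j$ not adjacent to $i$, including $j=i$ itself.

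\textbf{Summation and double counting.} Summing over $i$ and using $\sum_i x_i^2=1$ gives
\begin{align*}
\rho^2\le 2m-\sum_{j}x_j^2\,S_j,\qquad S_j:=\sum_{i\notin N(j)}d_i .
\end{align*}
Here $S_j$ is the total degree of the vertices that are not neighbours of $j$, including $j$ itself. The key estimate is $S_j\ge n-1$ for every $j$. The term $i=j$ contributes $d_j$. There are $n-1-d_j$ further non-neighbours, and each has degree at least $1$ because $G$ is connected (for $n\ge 2$). Hence $S_j\ge d_j+(n-1-d_j)=n-1$, and therefore $\rho^2\le 2m-(n-1)$, which is the stated bound.

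\textbf{Equality case.} This is the only step needing care, and I expect it to be the main obstacle. Equality forces two things for every $j$:
\begin{enumerate}[(i)]
\item every vertex not adjacent to $j$, other than $j$ itself, is a leaf;
\item by the Cauchy--Schwarz equality case, $x$ is constant on each neighbourhood $N(i)$.
\end{enumerate}
From (i): if some vertex $j$ has a non-neighbour $i\neq j$, then $i$ is a leaf. If $G$ is not complete, pick a leaf $\ell$ with unique neighbour $c$. Applying (i) with $j=\ell$ shows that every vertex other than $\ell$ and $c$ is a leaf. A connected graph in which all vertices except possibly $c$ are leaves must be the star $K_{1,n-1}$ centred at $c$. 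Otherwise $G$ is complete, i.e. $K_n$. Condition (ii) is then automatically consistent with these two graphs.

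\textbf{Converse.} We check directly that $\rho_1(K_{1,n-1})=\sqrt{n-1}=\sqrt{2m-n+1}$ (with $m=n-1$) and $\rho_1(K_n)=n-1=\sqrt{n(n-1)-n+1}$ (with $m=n(n-1)/2$). The case $n=1$ is trivial.
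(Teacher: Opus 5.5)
Your proof is correct. The paper gives no proof of this statement; it quotes it from Hong's 1988 paper. Your argument is essentially Hong's original one: Cauchy--Schwarz on the Perron vector, then double counting with $S_j\geq n-1$. Because $x_j>0$ for every $j$, equality forces $S_j=n-1$ for all $j$, and your condition (i) alone yields $K_n$ or $K_{1,n-1}$, so the Cauchy--Schwarz equality condition (ii) is not needed.
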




\begin{theorem}[Rowlinson {\cite{Rowlinson1988}}]\label{thm:Rowlinson_fixed_size}
Let
\begin{align*}
    m=\binom{r}{2}+R,\qquad \text{where } 0<R<r,
\end{align*}
and let $G_m$ be the graph obtained from the complete graph $K_r$ by adding one new vertex of degree $R$. 
If $G$ is a graph with maximum spectral radius among the graphs with $m$ edges, then $G$ has a unique nontrivial component $H$ and $H=G_m$.
\end{theorem}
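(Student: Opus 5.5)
The plan is to argue in three stages: reduce to a connected extremal graph, show that this graph is a threshold graph, and then pin down which threshold graph it is. Throughout, let $G$ attain the maximum spectral radius among graphs with $m=\binom r2+R$ edges, delete its isolated vertices, and let $H$ be a component with $\rho_1(H)=\rho_1(G)$.

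\emph{Stage 1: uniqueness of the nontrivial component.} Suppose $G$ had a second nontrivial component $H'$, and take an edge $e\in E(H')$. If $H$ is not complete, add a missing edge of $H$; otherwise attach a new pendant vertex to $H$. In both cases, delete $e$ as well. The number of edges is unchanged. By Lemma~\ref{lem:proper_subgraph_spectral_radius} the component containing $H$ now has spectral radius strictly larger than $\rho_1(H)$, and deleting $e$ cannot raise the maximum. This contradicts extremality, so $H$ is the unique nontrivial component.

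\emph{Stage 2: $H$ is a threshold graph.} Let $\mathbf x>0$ be the Perron vector of $H$ (Lemma~\ref{lem:Metzler matrix_PF_theorem}). Take $u,v$ with $x_u\ge x_v$, and suppose some $w\ne u$ has $w\sim v$ and $w\not\sim u$. Replace the edge $vw$ by $uw$. The Rayleigh quotient changes by $2x_w(x_u-x_v)\ge 0$. If the spectral radius stayed the same, $\mathbf x$ would be an eigenvector of the new graph too, and comparing the eigen-equations at $v$ forces $x_w=0$, which is impossible. Hence the spectral radius strictly increases, contradicting extremality. Consequently the neighbourhoods $N(v)\setminus\{u\}\subseteq N(u)\setminus\{v\}$ are nested whenever $x_u\ge x_v$. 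So $H$ is a threshold graph: it is built from a dominating clique by repeatedly adding dominating or isolated vertices, with degrees monotone in the entries of $\mathbf x$.

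\emph{Stage 3: identifying the threshold graph.} I will split by the order $n$ of $H$. Since $R>0$ gives $m>\binom r2$, we have $n\ge r+1$.

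For $n\ge r+2$, apply Hong's bound (Theorem~\ref{thm:Hong}):
\begin{align*}
\rho_1(H)\le\sqrt{2m-n+1}\le\sqrt{r^2-2r-1+2R}.
\end{align*}
I will then produce a lower bound for $\rho_1(G_m)$ exceeding this, using a test vector that equals $1$ on $K_r$ and $t$ on the extra vertex, with $t$ optimized. This gives
\begin{align*}
\rho_1(G_m)\ge\lambda,\qquad \lambda^2-(r-2)\lambda-R=0,
\end{align*}
whose root should beat $\sqrt{r^2-2r-1+2R}$. The cases where $H$ is a star or complete must be checked separately. I expect the comparison near $R=1$ to be tight and to need care.

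For $n=r+1$, the complement $\overline H$ has $r-R$ edges and is also threshold. I will show that $\overline H$ must be the star $K_{1,r-R}$ together with isolated vertices. The plan is to write $\rho_1(H)=\max\mathbf x^\top(J-I-A(\overline H))\mathbf x$ and argue that concentrating the missing edges at the vertex with the smallest Perron entry is optimal. The tool is again the shifting step of Stage 2, applied to the complement: moving a missing edge toward the vertex of minimal entry.

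I expect the main obstacle to be this final comparison. Shifting alone only yields threshold structure, and distinguishing among different threshold complements with the same number of edges may require an explicit comparison of characteristic polynomials, as in Rowlinson's original argument. That is the fallback if the Perron-vector shifting does not close the gap.
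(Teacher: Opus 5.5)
There is a genuine gap, and it sits in Stage 3, which is where the content of Rowlinson's theorem lies. The paper itself gives no proof of this statement; it imports it from Rowlinson. Stages 1 and 2 are fine: they are the standard Brualdi--Hoffman reduction to a single connected threshold component.

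\emph{The case $n\ge r+2$.} Hong's inequality cannot exclude all these orders, however sharply you bound $\rho_1(G_m)$ from below. Take $r=5$, $R=2$ (so $m=12$) and $n=7$. Then $\sqrt{2m-n+1}=\sqrt{18}\approx 4.24$. But $\rho_1(G_m)$ is the largest root of $\sigma^3-3\sigma^2-6\sigma+4$ (the cubic from Lemma~\ref{lem:GrRs_spectral_radius_bound}), which is about $4.20$. For larger $r$ the failure covers a whole range of orders. For example, with $r=10$, $R=5$ one has $\rho_1(G_m)^2\approx 86.06$, which lies below $2m-n+1$ for $n=12,13,14$.

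Your displayed lower bound is also not what your test vector gives. Optimizing $t$ yields the largest root of $\lambda^2-(r-1)\lambda-R^2/r=0$. The root of your equation $\lambda^2-(r-2)\lambda-R=0$ is at most $r-1\le\sqrt{2m-(r+2)+1}$, so as written it never wins. Incidentally, $R=1$ is the one easy case, not the tight one: there Hong gives $\rho_1(H)\le r-1<\rho_1(G_m)$ for every $n\ge r+2$.

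\emph{The case $n=r+1$.} The single-edge shift of Stage 2 cannot push the $r-R$ non-edges into a star. It only enforces the threshold property, and non-star threshold complements survive it. For instance, when $r-R=3$ the graph $H=K_{r-2}\vee\overline{K_3}$ is threshold; its complement is a triangle $\{a,b,z\}$ plus isolated vertices. The move that turns the missing triangle into a missing star at $z$ restores $ab$ and deletes $zy$ with $y$ in the clique. It changes $\mathbf{x}^\top A\mathbf{x}$ by $2x_a(x_a-x_y)<0$, because the three independent vertices have equal Perron entries, each smaller than $x_y$. So concentrating the missing edges at the minimal-entry vertex does not follow from shifting with a fixed Perron vector.

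What is actually needed is a comparison among all threshold graphs on $r+1$ vertices with $r-R$ non-edges. That family grows with $r-R$, so ``compare characteristic polynomials'' is not a finite check. This comparison, together with the uncovered range $n\ge r+2$, is exactly what your proposal leaves unproved. It needs a genuinely new comparison argument, not Perron-vector shifting plus Hong's bound.
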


We also need the quotient matrix to estimate eigenvalues of some specific graphs.
\begin{definition}[{\cite[Definition~1.1]{brouwer2011spectra}}]\label{def:block_matrix}
Let $M$ be a complex matrix of order $n$ described in the following block form
\begin{equation*}
M=
\begin{pmatrix}
M_{11} & \cdots & M_{1t}\\
\vdots & \ddots & \vdots\\
M_{t1} & \cdots & M_{tt}
\end{pmatrix},
\end{equation*}
where the blocks $M_{ij}$ are $n_i\times n_j$ matrices for any $1\leq i,j\leq t$ and $n=n_1+\cdots+n_t$.
For $1\leq i,j\leq t$, let $b_{ij}$ denote the average row sum of $M_{ij}$, i.e., $b_{ij}$ is the sum of all entries in $M_{ij}$ divided by
the number of rows. 
Then $B(M)=(b_{ij})$, or simply $B$, is called the quotient matrix of $M$. 
If, in addition, for each pair $i,j$, the block $M_{ij}$ has constant row sum; that is, $M_{ij} {\mathbf e}_{n_j}=b_{ij} {\mathbf e}_{n_i}$, then $B$ is called the equitable quotient matrix of $M$, where $ {\mathbf e}_k=(1,1,\ldots,1)^\top\in\mathbb C^k$, and $\mathbb C$ denotes the field of complex numbers.
\end{definition}


\begin{corollary}[{\cite[Corollary~2.6]{you2019equitable}}]\label{cor:equitable_partition_adjacency}
If $M$ is the adjacency matrix of a graph, not necessarily connected, with an equitable partition and $B$ is the adjacency matrix of a divisor with
respect to the partition, then $\sigma_1(M)=\sigma_1(B)$, hence $\sigma_1(M)$ is an eigenvalue of $B$.
\end{corollary}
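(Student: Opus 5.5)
The plan is to encode the equitable partition in a characteristic matrix and transfer eigenvectors in both directions between $M$ and $B$. Let the cells be $V_1,\dots,V_t$ with $|V_i|=n_i$. Let $S\in\mathbb{R}^{n\times t}$ be the characteristic matrix, with $S_{v,i}=1$ if $v\in V_i$ and $0$ otherwise, and set $D=\operatorname{diag}(n_1,\dots,n_t)=S^\top S$. The equitable condition $M_{ij}\mathbf e_{n_j}=b_{ij}\mathbf e_{n_i}$ says precisely that
\begin{align*}
MS=SB .
\end{align*}
Because $M$ is symmetric, transposing gives $S^\top M=B^\top S^\top$.

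\textbf{Step 1: the spectrum of $B$ is real and contained in that of $M$.} If $B\mathbf y=\lambda\mathbf y$ with $\mathbf y\neq\mathbf 0$, then $M(S\mathbf y)=SB\mathbf y=\lambda S\mathbf y$. Since the columns of $S$ have disjoint nonempty supports, $S$ has full column rank, so $S\mathbf y\neq\mathbf 0$ and $\lambda$ is an eigenvalue of $M$. Reality needs a separate argument because $B$ itself need not be symmetric. Counting edges between $V_i$ and $V_j$ gives $n_ib_{ij}=n_jb_{ji}$, so $DB$ is symmetric. Hence $D^{1/2}BD^{-1/2}=D^{-1/2}(DB)D^{-1/2}$ is symmetric, $B$ is similar to it, and all eigenvalues of $B$ are real. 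Therefore $\sigma_1(B)\le\sigma_1(M)$.

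\textbf{Step 2: $\sigma_1(M)$ is an eigenvalue of $B$.} Since $M$ is a nonnegative symmetric matrix, the Perron--Frobenius theorem applied to a component realizing the spectral radius gives a nonzero vector $\mathbf x\ge\mathbf 0$ with $M\mathbf x=\sigma_1(M)\mathbf x$. This works even if the graph is disconnected: extend the component's Perron vector by zero. Then
\begin{align*}
B^\top(S^\top\mathbf x)=S^\top M\mathbf x=\sigma_1(M)\,S^\top\mathbf x .
\end{align*}
The entries of $S^\top\mathbf x$ are the cell sums of a nonnegative nonzero vector, so $S^\top\mathbf x\neq\mathbf 0$. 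Thus $\sigma_1(M)$ is an eigenvalue of $B^\top$, hence of $B$, and so $\sigma_1(M)\le\sigma_1(B)$.

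Combining the two steps gives $\sigma_1(M)=\sigma_1(B)$, and $\sigma_1(M)$ is an eigenvalue of $B$.

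The only delicate point is the possibly disconnected graph. There the Perron vector need not be positive, so one must use nonnegativity of $\mathbf x$ to guarantee $S^\top\mathbf x\neq\mathbf 0$. Stating it this way avoids any irreducibility hypothesis.
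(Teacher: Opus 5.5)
Your proof is correct. The paper gives no argument of its own for this statement. It quotes the result from \cite{you2019equitable} and uses it only as a black box, to obtain $\rho_1(G_m)=\sigma_1(Q)$ in Lemma~\ref{lem:GrRs_spectral_radius_bound}, so there is no internal proof to compare with. Your two-step argument via $MS=SB$ is a complete, self-contained replacement, and you correctly identified the one delicate point: in the disconnected case, nonnegativity of $\mathbf x$ is what forces $S^\top\mathbf x\neq\mathbf 0$.

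Two small remarks.
\begin{enumerate}
\item The symmetrization $D^{1/2}BD^{-1/2}$ in Step~1 is valid but unnecessary. Your first observation already shows that every eigenvalue of $B$, even an a priori complex one, is an eigenvalue of the real symmetric matrix $M$, and is therefore real.
\item Symmetry of $M$ is used in Step~2 only to turn a right Perron vector into a left one. Take instead a nonnegative left eigenvector $\mathbf y\geq\mathbf 0$ with $\mathbf y^\top M=\rho(M)\mathbf y^\top$. Then $(S^\top\mathbf y)^\top B=\mathbf y^\top MS=\rho(M)(S^\top\mathbf y)^\top$, and $S^\top\mathbf y\neq\mathbf 0$. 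The same argument therefore gives $\rho(B)=\rho(M)$ for an arbitrary, possibly non-symmetric and reducible, nonnegative matrix with an equitable partition. That is the natural general setting for this fact.
\end{enumerate}
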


\begin{lemma}\label{lem:GrRs_spectral_radius_bound}
Let $m=\binom{r}{2}+R$, where $0<R<r$. 
Then
$\rho_1(G_m)< r-1+\frac{R+1}{r+2}.$
\end{lemma}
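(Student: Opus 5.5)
We need to bound $\rho_1(G_m)$, where $G_m$ is $K_r$ together with one extra vertex $z$ adjacent to $R$ of the clique vertices. The plan is to use the equitable partition into three parts: $X$, the $R$ clique vertices adjacent to $z$; $Y$, the remaining $r-R$ clique vertices; and $\{z\}$. Each block has constant row sums, so by Corollary~\ref{cor:equitable_partition_adjacency} the spectral radius equals the largest eigenvalue of the quotient matrix
\begin{align*}
B=\begin{pmatrix} R-1 & r-R & 1\\ R & r-R-1 & 0\\ R & 0 & 0\end{pmatrix}.
\end{align*}
Its characteristic polynomial is
\begin{align*}
\phi(x)=\det(xI-B)=x^3-(r-2)x^2-(r+R-1)x+R(r-R-1).
\end{align*}
This should be double-checked by expanding along the third row. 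As sanity checks, $R=0$ gives $x(x-(r-1))(x+1)$, and $R=r$ gives $x^3-(r-2)x^2-(2r-1)x-r$, which has the root $r$ for the complete graph $K_{r+1}$.

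Set $\theta=r-1+\frac{R+1}{r+2}$. The claim follows once we show that $\phi(\theta)>0$ and that $\phi$ is increasing on $[\theta,\infty)$. Since $\phi$ is a monic cubic, these two facts force the largest real root to be less than $\theta$.

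For monotonicity, note that $\phi'(x)=3x^2-2(r-2)x-(r+R-1)$. At $x\ge r-1$ we have $3x^2-2(r-2)x=x(3x-2r+4)\ge (r-1)(r+1)$, and this exceeds $r+R-1\le 2r-2$ whenever $r\ge 2$. Hence $\phi'>0$ on $[r-1,\infty)$. The case $r=1$ is vacuous because $0<R<1$ is impossible.

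The main computational step is the sign of $\phi(\theta)$. Write $\theta=(r-1)+\delta$ with $\delta=\frac{R+1}{r+2}\in(0,1)$. One has $\phi(r-1)=-R(r-1)+R(r-R-1)=-R^2$, $\phi'(r-1)=(r-1)(r+1)-(r+R-1)=r^2-r-R$, and $\tfrac12\phi''(r-1)=3(r-1)-(r-2)=2r-1$. Taylor expansion gives
\begin{align*}
\phi(\theta)=-R^2+(r^2-r-R)\delta+(2r-1)\delta^2+\delta^3 .
\end{align*}
Multiply through by $(r+2)$. The term $(r^2-r-R)(R+1)$ equals $(r+2)(r-3)(R+1)+(6-R)(R+1)$, so this term alone already dominates $R^2(r+2)$ when $r$ is large relative to $R$. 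Since $R\le r-1$, however, I expect to need the quadratic and cubic terms too. I would simplify the quantity $(r+2)^3\phi(\theta)$ into a polynomial in $r$ and $R$, and then check its positivity using $1\le R\le r-1$, for instance by viewing it as a function of $R$ and checking the endpoints together with its concavity or convexity. This positivity check is the step I expect to be fiddly. If the margin is tight, small values of $r$ (say $r\le 3$) would be checked by hand, and the general case would follow by grouping terms.
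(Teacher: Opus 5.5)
Your setup is the same as the paper's and is correct. The partition ($R$ neighbours of $z$, the remaining $r-R$ clique vertices, $\{z\}$), the quotient matrix, the cubic $\phi$, its monotonicity on $[r-1,\infty)$, and the expansion $\phi(r-1+\delta)=-R^2+(r^2-r-R)\delta+(2r-1)\delta^2+\delta^3$ all check out.

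The gap is that $\phi(\theta)>0$ is never proved, and this is the actual content of the lemma. Your heuristics also misjudge where the difficulty lies: it is not small $r$ but $R$ close to $r$. At $R=r-1$, so $\delta=\frac{r}{r+2}$, one has
\[
(r+2)\bigl(-R^2+(r^2-r-R)\delta\bigr)=-2(r-1)^2,\qquad (r+2)\bigl(-R^2+(r^2-r-R)\delta+(2r-1)\delta^2\bigr)=\frac{-r^2+6r-4}{r+2}.
\]
The second quantity is negative for every $r\geq 6$. Only the cubic term makes the total positive, with exact value $(r+2)^3\phi(\theta)=4(r^2+2r-2)$. The leading terms of $(r+2)^3\phi(\theta)$ are of order $r^5$, so the relative margin shrinks as $r$ grows. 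Hence ``dominant terms for large $r$, small $r$ by hand'' cannot work; the terms have to be combined exactly.

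The missing step is an exact identity, and it is what the paper uses. Regroup
\[
\phi(r-1+\delta)=\delta(r+\delta)(r-1+\delta)-R(R+\delta)=\bigl(\delta(r+\delta)-R\bigr)(r-1+\delta)+R(r-1-R).
\]
The choice $\delta=\frac{R+1}{r+2}$ makes $\delta(r+\delta)-R=\frac{(r-R+1)^2}{(r+2)^2}$ a perfect square, hence
\[
\phi(\theta)=\frac{(r-R+1)^2}{(r+2)^2}\,\theta+R(r-1-R)>0,
\]
since $R\leq r-1$.

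Your concavity-in-$R$ plan would also work if carried out. Write $(r+2)^3\phi(\theta)=(r-R+1)^2(r^2+r+R-1)+R(r-1-R)(r+2)^3$. Its second $R$-derivative is at most $-2r^3-10r^2-20r-28<0$ on $[1,r-1]$. Its endpoint values are $r^3(r+1)+(r-2)(r+2)^3$ at $R=1$ and $4(r^2+2r-2)$ at $R=r-1$, both positive for $r\geq 2$. As written, though, your proposal stops before establishing either route.
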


\begin{proof}
Let $A$ be the set of the $R$ vertices of $K_r$ adjacent to $u$, where $u$ is the new vertex in $G_m$. 
Let $C=V(K_r)\setminus A$. Then
$|A|=R$ and $|C|=r-R$.
By the partition $A,C,\{u\}$, the adjacency matrix $A(G_m)$ has the equitable quotient matrix
\begin{align*}
Q=
\begin{pmatrix}
R-1 & r-R & 1\\
R & r-R-1 & 0\\
R & 0 & 0
\end{pmatrix}.
\end{align*}
Since $A(G_m)$ is nonnegative, by
Corollary~\ref{cor:equitable_partition_adjacency}, we have $\rho_1(G_m)=\sigma_1(Q)$.
The characteristic polynomial of $Q$ is
\begin{align*}
p(\sigma) =\sigma^3-(r-2)\sigma^2-(r+R-1)\sigma+R(r-R-1).
\end{align*}
Then $p'(\sigma) =3\sigma^2-2(r-2)\sigma-(r+R-1)$, and
$p''(\sigma) = 6\sigma-2(r-2)$.
For $\sigma\geq r-1$, we have $p''(\sigma)\geq 6(r-1)-2(r-2)=4r-2>0$, which implies that $p'$ is strictly increasing on $[r-1,\infty)$. 
Moreover, $p'(r-1)=r(r-1)-R>0$.
It follows that $p$ is strictly increasing on $[r-1,\infty)$.
Let $a=\frac{R+1}{r+2}$. 
Then 
\begin{align*}
p(r-1+a)
&=a(r+a)(r-1+a)-R(R+a)=\bigl(a(r+a)-R\bigr)(r-1+a)+R(r-1-R)\\
&=\frac{(r-R+1)^2}{(r+2)^2}(r-1+a)+R(r-1-R)>0.
\end{align*}
Since $p(r-1)=-R^2<0$, $p$ has exactly one root in $(r-1,\infty)$, especially in $(r-1,r-1+\frac{R+1}{r+2})$, which implies that $\sigma_1(Q)<r-1+\frac{R+1}{r+2}$. 
This proves the lemma.
\end{proof}

We use $K_{n}-e$ to denote the graph obtained from the complete graph $K_{n}$ by deleting one edge.
We have the following lemma.

\begin{lemma}\label{lem:near_complete_clique_estimate_combined_combined}
Let $\omega\geq 3$ and let $G$ be a connected graph with $m$ edges.
Then
\begin{align*}
    \rho_1(G)
    <
    \begin{cases}
    \displaystyle
    \omega-1+
    \frac{m-\binom{\omega}{2}}{\omega+1},
    & \text{if } \omega(G)=\omega,\ G\not\cong K_{\omega+1}-e,\ 
    \binom{\omega}{2}<m\leq \binom{\omega+1}{2}, \\[1.2em]
    \displaystyle
    \omega-1+
    \frac{m-\binom{\omega}{2}-1}{\omega+1},
    & \text{if } \ G\not\cong K_\omega,\ 
    m\leq \binom{\omega}{2}.
    \end{cases}
\end{align*}
\end{lemma}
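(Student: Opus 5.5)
The plan is to reduce both cases to Rowlinson's theorem (Theorem~\ref{thm:Rowlinson_fixed_size}) together with the explicit bound of Lemma~\ref{lem:GrRs_spectral_radius_bound}, using the clique number only to choose which $r$ to compare against.

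\textbf{First case.} Here $\omega(G)=\omega$ and $\binom{\omega}{2}<m\leq\binom{\omega+1}{2}$.

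If $m<\binom{\omega+1}{2}$, write $m=\binom{\omega}{2}+R$ with $0<R<\omega$. Rowlinson's theorem gives $\rho_1(G)\leq\rho_1(G_m)$. Lemma~\ref{lem:GrRs_spectral_radius_bound} with $r=\omega$ then gives
\[
\rho_1(G)\leq \rho_1(G_m)<\omega-1+\frac{R+1}{\omega+2}.
\]
This is not yet the target $\frac{R}{\omega+1}$, since $\frac{R+1}{\omega+2}\geq\frac{R}{\omega+1}$. So I would exploit $\omega(G)=\omega$. The graph $G_m$ has clique number $\omega+1$ exactly when $R=\omega$, which is excluded, so for $R<\omega$ the extremal $G_m$ is itself admissible. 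Hence the sharper bound must come from improving Lemma~\ref{lem:GrRs_spectral_radius_bound}. I would evaluate the cubic
\[
p(\sigma)=\sigma^3-(\omega-2)\sigma^2-(\omega+R-1)\sigma+R(\omega-R-1)
\]
at $\sigma=\omega-1+a$ with $a=R/(\omega+1)$, and show $p>0$ there. By the same factorization as in the lemma,
\[
p(\omega-1+a)=\bigl(a(\omega+a)-R\bigr)(\omega-1+a)+R(\omega-1-R).
\]
With $a=R/(\omega+1)$ we get $a(\omega+a)-R=\frac{R(R-\omega-1)}{(\omega+1)^2}$, which is negative. So positivity reduces to a polynomial inequality in $R$ and $\omega$:
\[
R(\omega-1-R)(\omega+1)^2>R(\omega+1-R)(\omega-1+a).
\]
This inequality fails when $R=\omega-1$, because the left side vanishes. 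That is presumably why the boundary cases $R=\omega-1$ and $m=\binom{\omega+1}{2}$ need separate handling.

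For those boundary values, the connected $K_{\omega+1}$-free graphs with the maximal number of edges are essentially $K_{\omega+1}-e$ (which has $\binom{\omega+1}{2}-1$ edges), and it is excluded. For the remaining graphs I would argue directly. If $G$ is not $G_m$, Rowlinson's uniqueness gives strict inequality against $G_m$, and I would compare with a nearby quantity. Otherwise I would use Theorem~\ref{thm:Hong}, $\rho_1\leq\sqrt{2m-n+1}$, with $n\geq\omega+2$ (forced since $G\neq K_{\omega+1}-e$ and $G$ is $K_{\omega+1}$-free). Squaring the target $\omega-1+\frac{R}{\omega+1}$ and comparing is then routine.

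\textbf{Second case.} Here $m\leq\binom{\omega}{2}$ and $G\not\cong K_\omega$, and the target is $\omega-2+\frac{\omega-R'}{\omega+1}$-type after rewriting. Write $m=\binom{r}{2}+R$ with $r\leq\omega$. If $R>0$, apply the first case or Lemma~\ref{lem:GrRs_spectral_radius_bound} with $r\leq\omega-1$. If $R=0$ and $r<\omega$, use Rowlinson's theorem in the form $\rho_1\leq r-1$. If $r=\omega$ and $R=0$, then $G\neq K_\omega$ and connected, so Theorem~\ref{thm:Hong} gives $\rho_1(G)<\sqrt{2m-n+1}\leq\sqrt{\omega(\omega-1)-\omega}$. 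That is less than $\omega-1-\frac{1}{\omega+1}$ after squaring.

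In each subcase, the key step is checking the elementary inequality $r-1+\frac{R+1}{r+2}\leq\omega-1+\frac{m-\binom{\omega}{2}-1}{\omega+1}$. This uses that $m-\binom{\omega}{2}$ is negative and that one loses at least roughly $1$ when $r$ drops.

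\textbf{Main obstacle.} I expect the main obstacle to be the sharpened cubic estimate in the first case, namely replacing $\frac{R+1}{r+2}$ by $\frac{R}{\omega+1}$, especially near $R=\omega-1$. There I would fall back on Hong's bound or a direct equitable-partition computation for $K_{\omega+1}$ minus a few edges.
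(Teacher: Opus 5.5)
Your argument for $1\le R\le\omega-2$ in the first case is correct, and it is simpler than the paper's. Rowlinson gives $\rho_1(G)\le\rho_1(G_m)$. Your reduced inequality $R(\omega-1-R)(\omega+1)^2>R(\omega+1-R)(\omega-1+a)$ holds, because its left side is at least $R(\omega+1)^2$ and its right side is below $R\omega^2$. Your treatment of $m<\binom{\omega}{2}$ in the second case matches the paper.

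\textbf{First case, $R\in\{\omega-1,\omega\}$.} For the leftover values you fall back on Theorem~\ref{thm:Hong}, and it is too weak there. You correctly get $n\ge\omega+2$. But at $n=\omega+2$, Hong gives only $\rho_1^2\le(\omega-1)^2+2R-2$, and with $t=\omega-1+\frac{R}{\omega+1}$,
\[
t^2-\bigl((\omega-1)^2+2R-2\bigr)=\Bigl(2-\frac{R}{\omega+1}\Bigr)^2-2 .
\]
This is negative whenever $\frac{R}{\omega+1}>2-\sqrt2$. That covers $R=\omega$ for every $\omega\ge3$, and $R=\omega-1$ for every $\omega\ge4$. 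For example, $\omega=3$, $m=6$, $n=5$ yields only $\rho_1<\sqrt8\approx2.83$, against the target $2.75$. Rowlinson cannot rescue this: the unconstrained maximizers $K_{\omega+1}-e$ and $K_{\omega+1}$ already exceed the target. So strict inequality against $G_m$ gives nothing, and you name no ``nearby quantity'' to compare with. Hong suffices only for $n\ge\omega+3$, which is exactly how the paper uses it.

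For $n\le\omega+2$ you need an argument that exploits the clique. The paper fixes a maximum clique and writes $A(G)$ in blocks over $V(K_\omega)$ and the $s\le2$ remaining vertices. Taking the Schur complement of $P=tI_\omega-(J_\omega-I_\omega)$ reduces $\rho_1(G)<t$ to $\sigma_1\bigl(C+M^{\top}P^{-1}M\bigr)<t$, where $P^{-1}=\frac{1}{t+1}I_\omega+\frac{1}{(t+1)(t-\omega+1)}J_\omega$. That is a $1\times1$ or $2\times2$ problem, which the paper settles by trace bounds and explicit polynomial checks in $R$ and $\omega$.

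\textbf{Second case, $m=\binom{\omega}{2}$.} The same weakness appears here. Hong gives $\rho_1^2\le\omega^2-2\omega$, but
\[
\Bigl(\omega-1-\frac{1}{\omega+1}\Bigr)^2-(\omega^2-2\omega)=\frac{-\omega^2+2\omega+4}{(\omega+1)^2}<0\qquad\text{for }\omega\ge4 .
\]
For instance, $\omega=4$ gives $\sqrt8>2.8$, so your comparison works only for $\omega=3$. The missing idea is to split on $\omega(G)$:
\begin{itemize}
\item If $\omega(G)\le\omega-2$, Theorem~\ref{thm:Nikiforov_fixed_size} gives $\rho_1^2\le\omega(\omega-1)\frac{\omega-3}{\omega-2}$, which is small enough.
\item If $\omega(G)=\omega-1$, write $m=\binom{\omega-1}{2}+(\omega-1)$ and apply the first case with $\omega'=\omega-1$. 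Here $G\not\cong K_\omega-e$ by edge count, and this gives $\rho_1<\omega-1-\frac1\omega<\omega-1-\frac{1}{\omega+1}$.
\end{itemize}
That step invokes the first case at its top value $m=\binom{\omega'+1}{2}$. This is exactly where your first-case argument is missing, so the two gaps must be closed together.
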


\begin{proof}
We first prove the upper bound under the first condition.
Fix a maximum clique $K_{\omega}$ of order $\omega$.
Let $S=V(G)\setminus V(K_\omega)$ and $s=|S|$.
Let $e(G)=\binom{\omega}{2}+R$. 
Then $1\leq R\leq \omega$.
Since $G$ is connected, $s\leq R$.
If $s\geq 3$, then $n=\omega+s$ and $m=\binom{\omega}{2}+R$. 
By Theorem~\ref{thm:Hong}, we have
\begin{align*}
\rho_1(G)^2
\leq 2m-n+1
=(\omega-1)^2+2R-s .
\end{align*}
Let $t=\omega-1+\frac{R}{\omega+1}$, 
as $s\geq 3$ and $R\leq \omega$, then
\begin{align*}
t^2-\bigl((\omega-1)^2+2R-s\bigr)
=s-\frac{4R}{\omega+1}+\frac{R^2}{(\omega+1)^2}
\geq (s-4)+\left(\frac{\omega+2}{\omega+1}\right)^2>0.
\end{align*}
Thus $\rho_1(G)<t$ in this case.

It remains to consider $s=1$ and $s=2$.
Write the adjacency matrix of $G$ as
\begin{align*}
A(G)=
\begin{pmatrix}
J_\omega-I_\omega & M\\
M^{\top} & C
\end{pmatrix},
\end{align*}
where $J_\omega$ is the all-one matrix and  $C=A(G[S])$.
Since $t>\omega-1$, the matrix
$P\coloneqq tI_\omega-(J_\omega-I_\omega)$ is positive definite, and then
$P^{-1}
=
\alpha I_\omega+\beta J_\omega$
where $\alpha=\frac1{t+1}$ and $\beta=\frac1{(t+1)(t-\omega+1)}$.
 Since $A(G)$ is symmetric, $\rho_1(G)\leq t$ is equivalent to showing that $tI-A(G)$ is positive semi-definite, where
\begin{align*}
    tI-A(G)=
    \begin{pmatrix}
        tI_\omega-(J_\omega-I_\omega) & -M\\
        -M^{\top} & tI_s-C
    \end{pmatrix}.
\end{align*}
Therefore, by the Schur complement, $tI-A(G)\geq 0$ is equivalent to $ tI_s-C-M^{\top}P^{-1}M\geq 0$.
Since $C+M^{\top}P^{-1}M$ is symmetric, it remains to prove $\sigma_1\bigl(C+M^{\top}P^{-1}M\bigr)< t$.

For $s=1$, the unique vertex of $S$ is adjacent to exactly $R$ vertices of $K$.
Since $\omega(G)=\omega$ and $G\not\cong K_{\omega+1}-e$, we have $R\leq \omega-2$.
Thus $ C+M^{\top}(\alpha I_\omega+\beta J_\omega)M = \alpha R+\beta R^2$.
A direct calculation gives
\begin{align*}
t-\alpha R-\beta R^2
=
\frac{
R^2-R(\omega^3+2\omega^2+4\omega+3)
+\omega^4+\omega^3-\omega^2-\omega
}{
(\omega+1)(R+\omega^2+\omega)
}.
\end{align*}
The numerator is decreasing in $R$ for $1\leq R\leq \omega-2$, and at $R=\omega-2$ it equals $\omega^3+10>0$.
Hence $\rho_1(G)< t$.

For $s=2$, we denote $S=\{x,y\}$.
Let $a=|N_G(x)\cap K|$ and $b=|N_G(y)\cap K|$.

If $xy\notin E(G)$, then $a+b=R$, and $a,b\geq 1$ by connectedness.
The matrix $M^{\top}(\alpha I_\omega+\beta J_\omega)M$ is positive semi-definite, so its largest eigenvalue is at most its trace. Hence
\begin{align*}
\sigma_1\bigl(M^{\top}(\alpha I_\omega+\beta J_\omega)M\bigr) \leq \alpha(a+b)+\beta(a^2+b^2)\leq \alpha R+\beta\bigl((R-1)^2+1\bigr).
\end{align*}
We now show that $\alpha R+\beta\bigl((R-1)^2+1\bigr)< t$.
Let $D=R+\omega^2+\omega$, and we have $\alpha=\frac{1}{t+1} =\frac{\omega+1}{D}$, $\beta = \frac{(\omega+1)^2}{RD}.$
Thus
\begin{align*}
    t-\alpha R-\beta\bigl((R-1)^2+1\bigr)=
    \frac{F(R)}{R(\omega+1)D},
\end{align*}
where $ F(R)=  R^3  -R^2(\omega^3+2\omega^2+4\omega+3) +R(\omega^4+3\omega^3+5\omega^2+5\omega+2) -2(\omega+1)^3$.
It follows that
\begin{align*}
   F''(R)
    =
    6R-2(\omega^3+2\omega^2+4\omega+3)
    \leq
    -2\omega^3-4\omega^2-2\omega-6
    <0.
\end{align*}
Hence $F(R)$ is concave on $[2,\omega]$. 
Therefore we only need to check $F(2)$ and $F(\omega)$. 
We have
$F(2)=2(\omega^4-2\omega^2-6\omega-1)>0$ and  $F(\omega)=\omega^4-4\omega^2-4\omega-2>0$
for every $\omega\geq 3$. 
It follows that $F>0$ for $2\leq R\leq \omega$.
Consequently, $\alpha R+\beta\bigl((R-1)^2+1\bigr)<t$.

If $xy\in E(G)$, then $a+b=R-1$.
If $a,b\geq 1$, by Lemma~\ref{thm:Weyl_inequality} and the similar calculation as above,
\begin{align*}
\sigma_1\bigl(C+M^{\top}(\alpha I_\omega+\beta J_\omega)M\bigr)
&\leq 1+\operatorname{tr}\bigl(M^{\top}(\alpha I_\omega+\beta J_\omega)M\bigr)\\
&\leq 1+\alpha(R-1)+\beta\bigl((R-2)^2+1\bigr)
< t.
\end{align*}
If one of $a,b$ is zero, WLOG let $b=0$, then $a=R-1$, and
\begin{align*}
C+M^{\top}(\alpha I_\omega+\beta J_\omega)M
=
\begin{pmatrix}
u & 1\\
1 & 0
\end{pmatrix},
\quad
u=\alpha(R-1)+\beta(R-1)^2.
\end{align*}
By direct calculation, $\rho_1(G)< t$.

It remains to prove the upper bound under the second condition.
If $m\leq \binom{\omega}{2}-1$, then $m=\binom r2+R$ with $0\leq R\leq r-1$, where $r\leq \omega-1$.
For $0< R\leq r-1$, by Theorem~\ref{thm:Rowlinson_fixed_size} and Lemma~\ref{lem:GrRs_spectral_radius_bound}, we have 
$$\rho_1(G)\leq \rho_1(G_m)< r-1+\frac{R+1}{r+2}=\frac{\binom{r+2}{2}+m-2}{r+2}.$$ 
For $R=0$,  by Theorem~\ref{thm:Hong}, $\rho_1(G)\leq \sqrt{2\binom{r}{2}-r+1}=r-1<r-1+\frac1{r+2}$.
It follows that $\rho_1(G)<\frac{\binom{r+2}{2}+m-2}{r+2}$ holds for all $0\leq R\leq r-1$.
Define
$\phi(x)=\frac{\binom{x}{2}+m-2}{x}$.
Since $m\leq \binom{r}{2}+r-1$, we have $\phi'(x)>0$ for $x\geq r+2$. 
Since $r+2\leq \omega+1$, then $\frac{\binom{r+2}{2}+m-2}{r+2}<\frac{\binom{\omega+1}{2}+m-2}{\omega+1}=\omega-1+\frac{m-\binom{\omega}{2}-1}{\omega+1}$.
Therefore, we have $\rho_1(G)<\omega-1+\frac{m-\binom{\omega}{2}-1}{\omega+1}$.

If $m=\binom{\omega}{2}$, then $\omega-1+\frac{m-\binom{\omega}{2}-1}{\omega+1}=\omega-1-\frac1{\omega+1}$. 
It is clear that $\omega(G)\leq \omega-1$ because $G\not\cong K_{\omega}$.
For $\omega=3$, $G$ is a tree with $m=3$.
Since the star uniquely attains the maximum spectral radius among all graphs of a given order~\cite{lovasz1973eigenvalues}, we have $\rho_1(G)\leq\rho_1(K_{1,3})=\sqrt3<\frac74=\omega-1-\frac1{\omega+1}$. 
For $\omega\geq 4$, if $\omega(G)\leq \omega-2$, we have $\rho_1(G)^2\leq 2\binom{\omega}{2}\left(1-\frac1{\omega-2}\right)=\omega(\omega-1)\frac{\omega-3}{\omega-2}<\left(\omega-1-\frac1{\omega+1}\right)^2$ by Theorem~\ref{thm:Nikiforov_fixed_size}.
It remains to consider that $\omega(G)=\omega-1$.
Since $m=\binom{\omega}{2}$, then $G\not\cong K_\omega-e$. 
By the first upper bound, we have $\rho_1(G)< \omega-2+\frac{\omega-1}{\omega}=\omega-1-\frac1\omega<\omega-1-\frac1{\omega+1}$.
The proof is complete.
\end{proof}

\begin{definition}[$k$-coalescence of graphs]
For a pair of connected graphs $G_1$ and $G_2$ with $n_1,n_2$ vertices and $m_1,m_2$ edges, respectively, each having an induced complete graph of order
$k$ with $n_1,n_2\geq k$, the graph obtained by merging $k$ vertices on
$\binom{k}{2}$ edges of the induced $K_k$ is called the $k$-coalescence of
$G_1$ and $G_2$, denoted by $G_1O_kG_2$. The graph $G_1O_kG_2$ is of order
$n_1+n_2-k$ with $m_1+m_2-\binom{k}{2}$ edges.
\end{definition}

\begin{lemma}\label{lem:coalesced-cliques}
Let $\omega \geq 2$. 
Let $m$ be the number of edges of the graph $K_\omega O_1 K_{\omega-1}$.
Then $\rho_2(K_\omega O_1 K_{\omega-1})<\frac{m-2}{\omega}$.
\end{lemma}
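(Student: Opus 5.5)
The plan is to compute $\rho_2$ exactly through an equitable partition and then compare it with the target value by a sign check on a cubic. Let $v$ be the common vertex of $K_\omega$ and $K_{\omega-1}$ in $G=K_\omega O_1K_{\omega-1}$. Then $n=2\omega-2$ and
\[
m=\binom{\omega}{2}+\binom{\omega-1}{2}=(\omega-1)^2,
\]
so the target bound reads
\[
\rho_2(G)<t:=\frac{(\omega-1)^2-2}{\omega}=\omega-2-\frac1\omega .
\]
Note first that the crude bounds available earlier are just barely too weak. Cauchy interlacing (Lemma~\ref{cor:spectral_radius_monotonicity}) with $G-v=K_{\omega-1}\cup K_{\omega-2}$ only gives $\rho_2(G)\le\omega-2$. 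Theorem~\ref{lem:adjacency_nodal_domain} gives the same value $\alpha(\omega-1)=\omega-2$. So an exact computation is needed.

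\textbf{Reduction to a $3\times 3$ quotient.} I would take the partition
\[
X=V(K_\omega)\setminus\{v\},\qquad \{v\},\qquad Y=V(K_{\omega-1})\setminus\{v\}.
\]
It is equitable, with quotient matrix
\[
Q=\begin{pmatrix}\omega-2&1&0\\ \omega-1&0&\omega-2\\ 0&1&\omega-3\end{pmatrix}.
\]
Vectors supported on $X$ (respectively $Y$) with zero sum are eigenvectors of $A(G)$ with eigenvalue $-1$. These account for $(\omega-2)+(\omega-3)$ eigenvalues. The remaining three eigenvalues of $A(G)$ are exactly the eigenvalues $r_1\ge r_2\ge r_3$ of $Q$, obtained by lifting eigenvectors of $Q$ to vectors constant on the parts. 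Hence $\rho_2(G)=\max\{r_2,-1\}$. Since $t>-1$ for $\omega\ge 2$, it suffices to show $r_2<t$.

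\textbf{Locating $r_2$.} Let $p(x)=\det(xI-Q)$, a monic cubic that I would expand explicitly. I then plan to verify three facts.

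1. $r_1=\rho_1(G)>\omega-1>t$, since $G$ properly contains $K_\omega$ (Lemma~\ref{lem:proper_subgraph_spectral_radius}).
2. $p(t)<0$.
3. $p$ is positive at some point $c<t$. A natural choice is $c=-1$ or $c=\omega-3$: I would compute $p(\omega-3)$ and $p(-1)$ and pick whichever gives a clean positive value.

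A monic cubic that is positive at $c$, negative at $t$, and positive again beyond $r_1$ must have $r_3<c<r_2<t<r_1$. Hence $r_2<t$.

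\textbf{Main obstacle and small cases.} The only real work is the algebra for $p(t)$ with $t=\omega-2-1/\omega$. After multiplying by $\omega^3$ this becomes a polynomial in $\omega$, and I expect it to be negative for all $\omega\ge 4$ by an elementary leading-term argument. If instead $p(t)$ turns out positive, that would mean $t$ sits below $r_2$, and I would recheck the configuration of roots. The small values I would check by hand. For $\omega=2$, the graph is $K_2$ with $m=1$, so $\rho_2=-1<-\tfrac12$. For $\omega=3$, the graph is $K_3$ with a pendant edge, $m=4$, and one computes $\rho_2<\tfrac23$ directly from its characteristic polynomial.
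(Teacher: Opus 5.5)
Your plan is correct and is essentially the paper's proof. The paper obtains the same cubic factor $x^3-(2\omega-5)x^2+(\omega^2-7\omega+9)x+2\omega^2-8\omega+7$, though it cites a general $k$-coalescence formula rather than deriving it from your equitable quotient, and it locates the middle root by the same sign checks: $p(\omega-3)=\omega-2>0$ and $p\bigl(\omega-2-\tfrac1\omega\bigr)=-(\omega^4-2\omega^3+\omega+1)/\omega^3<0$. This last value is negative for every $\omega\ge 2$, so your general argument already covers $\omega=3$, and only $\omega=2$ (where $Y$ is empty) needs separate treatment.
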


\begin{proof}
For $\omega=2$, $K_2O_1K_1\cong K_2$.
Clearly, $\rho_2(K_2)=-1<-\frac12=\frac{m-2}{\omega}$.
Assume that $\omega\geq 3$.
Since $m=\binom{\omega}{2}+\binom{\omega-1}{2} =(\omega-1)^2$, we have $\frac{m-2}{\omega} = \frac{(\omega-1)^2-2}{\omega} =\omega-2-\frac{1}{\omega}$.
The characteristic polynomial $P(K_{n_1}O_kK_{n_2};x)$ of $k$-coalescence $K_{n_1}O_kK_{n_2}$ is given by
\begin{align*}
P(K_{n_1}O_kK_{n_2};x)
&=
(x+1)^{n_1+n_2-k-3}
\Big[x^3-(n_1+n_2-k-3)x^2  \\
&+
\big((n_1-k-1)(n_2-k-1)
+(k-1)(n_1+n_2-2k-2)  \\
&-
k(n_1+n_2-2k)\big)x +
k(n_1-k)(n_2-k-1)\\
&+k(n_2-k)(n_1-k-1)
-(k-1)(n_1-k-1)(n_2-k-1)
\Big];
\end{align*}
see~\cite{J-P2021}.
Then
\begin{align*}
P(K_{\omega}O_1K_{\omega-1};x)=
(x+1)^{2\omega-5}
\Big(
x^3-(2\omega-5)x^2  
+
(\omega^2-7\omega+9)x
+2\omega^2-8\omega+7
\Big).
\end{align*}
Let $f(x)= x^3-(2\omega-5)x^2 +(\omega^2-7\omega+9)x +2\omega^2-8\omega+7$.
Denote three roots of $f$ by $\alpha_1\geq \alpha_2 \geq \alpha_3$.
Since
$f(-2)=1-2\omega<0$, $f(-1)=(\omega-2)(\omega-1)>0$, $f(\omega-3)=\omega-2>0$, $f\left(\omega-2-\frac{1}{\omega}\right) = -\frac{\omega^4-2\omega^3+\omega+1}{\omega^3}<0$, $f(\omega-1)=2-\omega<0$, and $f(\omega)=\omega+7>0$,
 we have
\begin{align*}
    \alpha_1\in(\omega-1,\omega),
    \qquad
    \alpha_2\in
    \left(\omega-3,\ \omega-2-\frac{1}{\omega}\right),
    \qquad
    \alpha_3\in(-2,-1).
\end{align*}
All the remaining $2\omega-5$ eigenvalues of $K_{\omega}O_1K_{\omega-1}$ are $-1$. 
Since $\omega\geq 3$, we have $\alpha_2>\omega-3\geq 0>-1$.
Then $\rho_2(K_\omega O_1K_{\omega-1})=\alpha_2$.
Therefore
\begin{align*}
        \rho_2(K_\omega O_1K_{\omega-1}) =
    \alpha_2 <
    \omega-2-\frac{1}{\omega}=\frac{m-2}{\omega}.
\end{align*}
This completes the proof.
\end{proof}







Then, we present a computational lemma.

\begin{lemma}\label{lem:new_packing}
Let $\omega\geq 4$ and let $m$ be a fixed real number.
Let $a,b,c$ be nonnegative integers such that $a+b+c=\omega$, $a\geq b\geq0$, and $b+c\geq2$.  
Let $r=ab+c(a+b)+\binom c2$.
For $b \geq 2$, define $L(a,b,c)\coloneqq \min\left\{ m-r, 2\left(m-\binom\omega2+\binom b2\right) \right\}$.
For $b\in\{0,1\}$, define $L(a,b,c)\coloneqq\min\left\{ m-r, 2\left(m-\binom\omega2\right) \right\}$.
Then
\begin{equation}\label{eq:new_packing}
L(a,b,c)\frac{\omega-3}{\omega-2}
    \leq \frac{(m-2)^2}{\omega^2}.
\end{equation}
Moreover, if $b=0$ or $b=1$, then the inequality is strict.
\end{lemma}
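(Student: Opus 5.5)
The plan is to reduce \eqref{eq:new_packing} to a comparison between a concave piecewise-linear function of $m$ and a convex quadratic in $m$, and then to settle that comparison by a tangent-line / discriminant argument, splitting according to $b$.

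\textbf{Step 1: rewrite $L$.} Since $a+b+c=\omega$, we have
\[
r=\binom{\omega}{2}-\binom a2-\binom b2 .
\]
Set $M=\binom\omega2-\binom b2$ if $b\ge2$, and $M=\binom\omega2$ if $b\in\{0,1\}$ (in that case $\binom b2=0$, so the two definitions agree). Put $x=m-M$ and $A=\binom a2$. In both cases $m-r=x+A$, and therefore
\[
L(a,b,c)=\min\{x+A,\ 2x\}.
\]
Write $K=\frac{\omega-2}{(\omega-3)\omega^2}$. The goal becomes
\[
\min\{x+A,2x\}\le K(m-2)^2 .
\]
If $x\le 0$, the left side is at most $2x\le0$, which is at most the right side. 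Equality would need $x=0$ and $m=2$, i.e.\ $M=2$. This is impossible for $\omega\ge4$, because $b\le\omega/2$ forces $M\ge\binom\omega2-\binom{\lfloor\omega/2\rfloor}2>2$. So from now on we may assume $x>0$.

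\textbf{Step 2: linearize.} For every $\lambda\in[0,1]$ we have
\[
\min\{x+A,2x\}\le (1+\lambda)x+(1-\lambda)A .
\]
Put $s=1+\lambda$ and use the elementary bound $s(m-2)-K(m-2)^2\le \frac{s^2}{4K}$. It then suffices to find some $\lambda\in[0,1]$ with
\begin{equation*}
\frac{s^2(\omega-3)\omega^2}{4(\omega-2)}\le s(M-2)-(1-\lambda)A .
\end{equation*}
This is a condition on $a,b,c,\omega$ alone, independent of $m$. Strictness of this condition gives strictness in \eqref{eq:new_packing}.

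\textbf{Step 3: the case $b\in\{0,1\}$.} Take $\lambda=1$, so $s=2$ and the $A$-term disappears. The condition becomes
\[
\omega^2\frac{\omega-3}{\omega-2}\le 2\binom\omega2-4=\omega^2-\omega-4 .
\]
This is equivalent to $\omega^2/(\omega-2)>\omega+4$, i.e.\ $\omega^2>\omega^2+2\omega-8$, i.e.\ $\omega<4$. That is the wrong direction, so the crude choice fails and I must exploit the full range of $\lambda$.

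The refinement is to keep the exact maximum of the concave function. Its kink sits at $x=A$, where its value is $2A$. So it suffices to check two things: the line $2x$ stays below the quadratic on $[0,A]$, and the line $x+A$ stays below it for $x\ge A$. Equivalently, take the better of $\lambda=1$ and $\lambda=0$ on the two ranges. With $b\le1$ we get $a\ge\omega-c-1$, and $c\ge 2-b$. I expect the explicit values $M=\binom\omega2$ and $A=\binom a2$ to make both discriminant checks reduce to polynomial inequalities in $\omega$ and $c$ that are strict for $\omega\ge4$.

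\textbf{Step 4: the case $b\ge2$ (main obstacle).} Here $M$ drops by $\binom b2$ and $a\ge b$, so $A\ge\binom b2$ partially compensates. I would optimize $\lambda$ explicitly, $\lambda^*=\min\{1,\max\{0,\cdot\}\}$ from the quadratic in $s$. The condition then becomes a polynomial inequality in $a,b,c$ under $a\ge b\ge2$ and $a+b+c=\omega$. I would reduce it to the worst case, which I expect to be $a=b$ and $c$ small, by monotonicity in $a$ at fixed $b+c$. The delicate point is that non-strict equality can occur here, so the final polynomial estimate must be tight. This is why the lemma claims only ``$\le$'' for $b\ge2$.
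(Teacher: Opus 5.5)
\textbf{Assessment: there is a genuine gap.} Steps 1--2 are correct, and in fact they lose nothing. The function $\min\{x+A,2x\}$ is concave and $K(m-2)^2$ is convex. So the inequality holds for all $m$ exactly when some line of slope $s\in[1,2]$ through the kink $m=M+A$ separates them, and the best slope is $s^*=2K(M+A-2)$ clipped to $[1,2]$. But your proposal stops where the actual content of the lemma begins, and the two concrete expectations you state in Steps 3--4 are false.

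\emph{Step 3.} Neither ``discriminant check'' holds in general. Take $\omega=7$ and $(a,b,c)=(5,0,2)$, so $M=21$, $A=10$, $K=5/196$. The slope-$2$ check fails since $1/K=39.2>2(M-2)=38$. The slope-$1$ check fails since $1/(4K)=9.8>M-2-A=9$. The lemma holds there only through the intermediate slope $s^*\approx1.48$, i.e.\ through the kink inequality $2A\le K(M+A-2)^2$ (here $20<21.45$). A range-restricted comparison is not a discriminant check: it is a kink check plus a check of where the tangency point lies.

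\emph{Step 4.} You are right that the worst case has $c=0$, but it is not $a=b$. Equality in the lemma occurs at $(a,b,c)=(\omega-2,2,0)$ and $m=(\omega-1)(\omega-2)$, where both sides equal $(\omega-3)^2$. By contrast, $a=b$ leaves positive slack (e.g.\ $\omega=8$, $(4,4,0)$). A reduction toward $a=b$ therefore goes in the wrong direction. Moreover, ``monotonicity in $a$ at fixed $b+c$'' is vacuous, since $a=\omega-b-c$.

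\textbf{The missing idea is a monotone reduction} that your own notation makes transparent: $\min\{x+A,2x\}$ is nondecreasing in $A=\binom a2$ and strictly decreasing in $M$.
\begin{itemize}
\item For $b\ge2$, you may replace $(a,b,c)$ by $(a+c,b,0)$.
\item For $b\le1$, you have $a\le\omega-2$ and $M=\binom\omega2$, which exceeds the $M$ of $(\omega-2,2,0)$ by $1$. Hence $L(a,b,c)<L(\omega-2,2,0)$, and the strict inequality follows from the non-strict one for $(\omega-2,2,0)$.
\end{itemize}
This is how the paper proceeds, and it makes your Step 3 unnecessary.

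What remains is the family $c=0$, $a=\omega-b$, $2\le b\le\omega/2$. Here the kink is at $m=a(\omega-1)$ and $s^*\le 2(\omega-2)/\omega<2$. You must actually verify two things.
\begin{itemize}
\item The kink inequality. The paper computes the slack $\frac{(m-2)^2}{\omega^2}-\frac{\omega-3}{\omega-2}L$ at the kink as $\frac{(b-2)P(a,b)}{\omega^2(\omega-2)}$, where $P(a,b)=a^3+(2b-5)a^2+(b^2-5b+2)a+4\ge P(b,b)=2(b-2)(b-1)(2b+1)\ge0$.
\item When $s^*<1$, the slope-$1$ tangency condition $ab-2\ge\frac{\omega^2(\omega-3)}{4(\omega-2)}$. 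The paper obtains this, strictly, from $\omega/2\le a<\frac{\omega^3-3\omega^2+4\omega-8}{2(\omega-1)(\omega-2)}$ together with the monotonicity of $a(\omega-a)$.
\end{itemize}
Until these computations are done, your proposal is a correct reformulation, not a proof.
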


\begin{proof}
We first consider that $c=0$.  
For $c=0$, we have $a+b=\omega$ and $r=ab$. 
Let $ A(x)=x-ab $ and $B(x)=2\left(x-\binom{\omega}{2}+\binom b2\right)$.
Moreover, $b\geq 2$ and $a\leq \omega-2$.
Clearly,
\begin{align*}
    \min\{A(x),B(x)\}=
    \begin{cases}
        B(x), & x\leq M,\\
        A(x), & x\geq M,
    \end{cases}
\end{align*}
where $M=a(\omega-1)$ and $A(M)=B(M)$.
Let $F(x) = \frac{(x-2)^2}{\omega^2}-\frac{\omega-3}{\omega-2}\min\{A(x),B(x)\}.$
For $x\leq M$,
$
F'(x)=\frac{2(x-2)}{\omega^2}
-\frac{2(\omega-3)}{\omega-2}
\leq  
\frac{2(a(\omega-1)-2)}{\omega^2}
-\frac{2(\omega-3)}{\omega-2}
\leq 
\frac{2((\omega-1)(\omega-2)-2)}{\omega^2}
-\frac{2(\omega-3)}{\omega-2}
<0.$
It follows that $F(x)\geq F(M)=\frac{(b-2)P(a,b)}{\omega^2(\omega-2)}$, where $P(a,b)=a^3+(2b-5)a^2+(b^2-5b+2)a+4.$
Then $\frac{\partial P}{\partial a}= 3a^2+2(2b-5)a+(b^2-5b+2)\geq 8b^2-15b+2>0$
due to $a\geq b$, which implies that $P(a,b)\geq P(b,b)=2(b-2)(b-1)(2b+1)\geq 0.$
Therefore, we have $F(x) \geq F(M)\geq 0$ for $x\leq M$.

For $x\geq M$, $F'(x)=\frac{2(x-2)}{\omega^2}
-\frac{\omega-3}{\omega-2}.$
Solving $F'(x)=0$ gives $x_0=2+\frac{\omega^2(\omega-3)}{2(\omega-2)}.$
If $x_0\leq M$, then $F$ is increasing on $[M,\infty)$, and $F(x)\geq F(M)\geq 0$.

Suppose that $x_0>M$. 
Since $F$ is convex on $[M,\infty)$,  $F$ attains its minimum at $x_0$.
By calculation, the inequality $x_0>M$ is equivalent to $a<\frac{\omega^3-3\omega^2+4\omega-8}{2(\omega-1)(\omega-2)}.$ 
Note that $F(x_0)=\frac{\omega-3}{\omega-2}
\left(ab-2-\frac{\omega^2(\omega-3)}{4(\omega-2)}
\right).$
It suffices to prove $ab-2-\frac{\omega^2(\omega-3)}{4(\omega-2)}>0.$
Let $A=\frac{\omega^3-3\omega^2+4\omega-8}{2(\omega-1)(\omega-2)}.$
Since $a\geq b$, we have $a\geq \omega/2$. 
Moreover, the function $g(y)=y(\omega-y)$ is decreasing on $[\omega/2,\infty)$. 
Then
$$ab=a(\omega-a)>A(\omega-A)=
2+\frac{\omega^2(\omega-3)}{4(\omega-2)}+
\frac{(\omega-4)^2(\omega-3)(\omega^2-\omega+2)}
{4(\omega-2)^2(\omega-1)^2}.$$
So $ab>2+\frac{\omega^2(\omega-3)}{4(\omega-2)}.$
Consequently, $F(x)\geq F(x_0)>0$ for all $x\geq M$.

Taking $x=m$, we obtain the desired inequality for $c=0$.

Next we consider that $c>0$.
Suppose that $b\geq 2$.
We prove that $L(a,b,c)\leq L(a+c,b,0)$.
Note that
$r-\bigl((a+c)b\bigr)=ab+c(a+b)+\binom c2-(a+c)b=ac+\binom c2> 0.$
Then $m-r < m-(a+c)b$.
Since $b$ is unchanged, the second term in $L(a,b,c)$ is unchanged. 
Therefore $L(a,b,c)\leq L(a+c,b,0)$.

If $b=1$, then $c\geq1$ and $a+c=\omega-1$.  
We compare $L(a,1,c)$ with $L(\omega-2,2,0)$.
Note that $r-2(\omega-2)=
a(c-1)+\frac{c^2-3c+4}{2}
>0$, and we have $m-r<m-2(\omega-2)$.
Besides, $x-\binom{\omega}{2} <x-\binom{\omega}{2}+\binom 22$.
Consequently, $L(a,1,c)< L(\omega-2,2,0)$, which implies that~\eqref{eq:new_packing} is strict.
If $b=0$, then $c\geq2$ and $a+c=\omega$. 
Again we compare $L(a,0,c)$ with $L(\omega-2,2,0)$.  
Since $r-2(\omega-2)=
1+\frac{(c-2)(2\omega-c-3)}{2}
>0$, we have $m-r<m-2(\omega-2)$.
Besides, $x-\binom{\omega}{2} <x-\binom{\omega}{2}+\binom 22$.
Then $L(a,0,c)< L(\omega-2,2,0)$, which implies that~\eqref{eq:new_packing} is strict.

The proof is complete.
\end{proof}

Now we are ready to prove the conjecture.
\begin{proof}[\bf{{Proof of Theorem~\ref{thm:second largest eigenvalue}}}]

Let $G$ be a connected graph of order $n\geq 2$, size $m$, and clique number $\omega$.
Clearly, $\omega\geq 2$.
If $n=2$, then  $G\cong K_2$, so the assertion is immediate.
Assume that $n\geq 4$.
If $\rho_2(G)=0$, then $|\rho_2(G)|\omega=0$, and there is nothing to prove.
If $\rho_2(G)<0$, then $G$ is a complete graph. 
In this case $\omega=n$, $m=\binom n2$, and $|\rho_2(G)|=1$.
If $n$ is odd, then $|\rho_2(G)|\omega=n \leq \binom n2-2=m-2$, where the inequality strictly holds for odd order $n\geq 5$.
If $n$ is even, then we claim that $|\rho_2(G)|\omega-e(G) \leq -2< |\rho_2(2K_\omega+e)|\omega-e(2K_\omega+e))$.
The characteristic polynomial $P(2K_\omega+e;x)$ of $2K_\omega+e$ is given by $P(2K_\omega+e;x)
=(x+1)^{2\omega-4}
\left(x^2-(\omega-1)x-1\right)
\left(x^2-(\omega-3)x-(2\omega-3)\right)$.
Then $\rho_2(2K_\omega+e)=
\frac{\omega-3+\sqrt{(\omega-1)(\omega+3)}}{2}$.
 For $\omega\geq 2$, we have 
$\omega\rho_2(2K_\omega+e)-e(2K_\omega+e)=
\frac{\omega(\omega-3)+\omega\sqrt{(\omega-1)(\omega+3)}}{2}-\omega(\omega-1)-1=
\frac{
\omega\sqrt{(\omega-1)(\omega+3)}
-\omega^2-\omega-2
}{2}>-2.$
Consequently, $|\rho_2(G)|\omega-e(G) \leq -2< |\rho_2(2K_\omega+e)|\omega-e(2K_\omega+e)$.
Therefore, in the rest of the proof, we may assume that $\rho_2>0$.

Let $f$ be a $\rho_2$-eigenfunction, and let $U_1,\ldots,U_l$ be the nodal domains of $f$. 
Write $\Omega_{U_i}$ as $\Omega_i$ for simplicity.
Let $G[\Omega_i]$ be the subgraph of $G$ induced by $\Omega_i$.
Let $\kappa = \max_{1\leq i\leq l}\omega(G[\Omega_i])$.
Clearly, \(\kappa\leq \omega\).
Denote $m_i=e(G[\Omega_i])$.
Let $r=m-\sum_{i=1}^l m_i$. 
By Theorem~\ref{thm:adjacency_nodal_D} and Corollary~\ref{cor:A-W_properties}, for each nodal domain $U_i$,
\begin{align}\label{eq:nodal_basic_comparison}
    \rho_2=\mu_1(G_{U_i})\leq \rho_1(G[\Omega_i]).
\end{align}
If there exists a boundary edge with positive weight in $G_{U_i}$, then the inequality of~\eqref{eq:nodal_basic_comparison} is strict.



We first establish the following auxiliary inequality.
If $m_i+r \geq \binom{\omega}{2}+2$, then there exists $j\neq i$ such that
\begin{equation}
    \begin{aligned}\label{eq_1_1_1}
    2m_j\frac{\omega-1}{\omega}
    \leq
    2\left(m-\binom{\omega}{2}-2\right)\frac{\omega-1}{\omega}
    \leq
    \frac{(m-2)^2}{\omega^2},
\end{aligned}
\end{equation}
where the equality in \eqref{eq_1_1_1} holds if and only if $m=\omega^2-\omega+2$ and $m_j=m-\binom{\omega}{2}-2=\binom{\omega}{2}$.

Let $Z\coloneqq V(G)\setminus \bigcup_{k=1}^l \Omega_k =\{u\in V(G):f(u)=0\}$.
We split the proof into two cases according to whether $Z$ is empty.

\setcounter{mycase}{0}

\mycase{case:Z-nonempty}{$Z\neq\emptyset$.}

We distinguish the following three subcases according to the value of $\kappa$.

\mysubcase{subcase:Z-nonempty-kappa-omega}{$\kappa=\omega$.}

Choose a nodal domain $U$ such that $G[\Omega_U]$ contains a clique $K_\omega$.
WLOG, we may assume that $U=U_1$.
Since  $Z\neq\emptyset$, at least two edges of $G$ are not internal to any nodal domain. 
It follows that $r\geq 2$.
Then $m_1+r \geq \binom{\omega}{2}+2$.
By~\eqref{eq_1_1_1}, there exists another nodal domain $U_j$  such that $2m_j\frac{\omega-1}{\omega} \leq \frac{(m-2)^2}{\omega^2}$.
Thus, by~\eqref{eq:nodal_basic_comparison} and Theorem~\ref{thm:Nikiforov_fixed_size}, we have
\begin{equation}
    \begin{aligned}\label{eq_Z_a_1}
        \rho_2^2 \leq \rho_1^2(G[\Omega_j])\leq 2m_j\frac{\omega-1}{\omega} \leq  \frac{(m-2)^2}{\omega^2}.
\end{aligned}
\end{equation}

\mysubcase{subcase:Z-nonempty-kappa-omega-1}{$\kappa=\omega-1$.}
 
If $\omega=2$, then $\kappa =1$. 
Hence every $G[\Omega_i]$ is a single vertex. 
By~\eqref{eq:nodal_basic_comparison}, we have $\rho_2\leq0$, contrary to our assumption $\rho_2>0$. 
Therefore, we may assume that $\omega\geq3$.

Choose a nodal domain $U$ such that $G[\Omega_U]$ contains a clique $K_{\omega-1}$.
WLOG, we may assume that $U=U_1$.
Then $m_1\geq \binom{\omega-1}{2}.$
Since no nodal domain contains $K_\omega$, the clique $K_\omega$ in $G$ contributes at least $\omega-1$ edges that are not internal to any nodal domain. 
Moreover, since $Z\neq\emptyset$ and $f$ has both positive and negative entries, there is at least one more edge which is not internal to any nodal domain. 
Hence $r\geq \omega$.
Let $U_2$ be another nodal domain different from $U_1$.
It follows that $m_2 \leq m-\binom{\omega-1}{2}-\omega$ due to
$\min\{2m_1,2m_2\}\leq m_1+m_2\leq m-\omega$.
By \eqref{eq:nodal_basic_comparison} and Theorem~\ref{thm:Nikiforov_fixed_size},
\begin{equation}
    \begin{aligned}\label{eq_Z_a_2}
\rho_2^2 \leq \min\{\rho_1^2(G[\Omega_1]), \rho_1^2(G[\Omega_2])\} \leq \min\{m-\omega, 2(m-\binom{\omega-1}{2}-\omega)\}\frac{\omega-2}{\omega-1}.
\end{aligned}
\end{equation}
We claim that $\rho_2^2\le \frac{(m-2)^2}{\omega^2}.$
Let $A(x)=(x-\omega)$
and $B(x)= 2(x-\binom{\omega-1}{2}-\omega).$
It suffices to show that $\min\{A(m),B(m)\}\frac{\omega-2}{\omega-1}\leq \frac{(m-2)^2}{\omega^2}$.
Clearly,
\begin{align*}
    \min\{A(x),B(x)\}=
    \begin{cases}
        B(x), & x\leq M,\\
        A(x), & x\geq M,
    \end{cases}
\end{align*}
where $M=\omega^2-2\omega+2$ and $A(M)=B(M)$.
Let $F(x)= \frac{(x-2)^2}{\omega^2}-\min\{A(x),B(x)\}\frac{\omega-2}{\omega-1}$.
For $x\leq M$,  $F'(x) = \frac{2(x-2)}{\omega^2}- 2\frac{\omega-2}{\omega-1}< 0$, which implies that $F(x)\geq F(M)=0$.  
For $x\geq M$, $F'(x) = \frac{2(x-2)}{\omega^2} - \frac{\omega-2}{\omega-1} \geq 0$, which implies that $F(x)\geq F(M)=0$.  
Therefore, we have $\rho_2^2\le \frac{(m-2)^2}{\omega^2}$.

\mysubcase{subcase:Z-nonempty-kappa-omega-2}{$\kappa \leq \omega-2$.}

Since $\kappa\geq1$, $\omega\geq 3$.
If $\omega=3$, then $\kappa=1$. 
Hence each $G[\Omega_i]$ is a single vertex. 
By~\eqref{eq:nodal_basic_comparison}, we have $\rho_2\leq0$.
Assume that $\omega\geq4$.

Choose a maximum clique $K_\omega$ of $G$. 
Let $U$ be a nodal domain such that $|K_\omega\cap \Omega|=\max_i |K_\omega\cap \Omega_i|$, where $\Omega$ is the interior of $U$.
WLOG, assume that $U=U_1$. 
Let $a=|K_\omega\cap \Omega_1|$.
Replacing $f$ by $-f$ if necessary, assume that $U_1$ is a positive nodal domain.
Note that $K_\omega$ can merely intersect with one positive nodal domain.
If a negative nodal domain intersects 
$K_\omega$, let $U_2$ be that domain; otherwise, choose an arbitrary negative nodal domain and still denote it by $U_2$.
Let $b=|K_\omega\cap \Omega_2|$ and $c=|K_\omega\cap Z|$. 
Then $a+b+c=\omega$ and $a\geq b\geq0$.
Since $\kappa\leq\omega-2$, we have $a\leq\omega-2$. 
Then $b+c\geq2$.
Let
\begin{align}\label{eq:clique_crossing_r}
    r_K=ab+c(a+b)+\binom c2,
\end{align}
where $r_K$ counts the edges of $K_\omega$ which are not contained in any nodal domain.
It follows that $r\geq r_K$.
So $\min\{2m_1,2m_2\}\leq m_1+m_2 \leq m-r\leq m-r_K$, and $m_1+r_K\geq \binom{\omega}{2}-\binom b2$.
Then by~\eqref{eq:nodal_basic_comparison}, \eqref{eq:clique_crossing_r}, Theorem~\ref{thm:Nikiforov_fixed_size}, and Lemma \ref{lem:new_packing}, we have
\begin{align*}
    \rho_2^2 \leq \min\{2m_1,2m_2\}\frac{\omega-3}{\omega-2}
    \leq
    \min\left\{m-r_K,\,2\left(m-\binom{\omega}{2}+\binom b2\right)\right\}
    \frac{\omega-3}{\omega-2}\leq \frac{(m-2)^2}{\omega^2}.
\end{align*}
If $b\neq 0$, then $K_\omega$ intersects both positive and negative nodal domains, which implies that the total boundary edge weights in $G_U$ do not vanish.
Therefore the inequalities in~\eqref{eq:nodal_basic_comparison} are strict for $i=1,2$.
If $b=0$, then the inequality in the Lemma~\ref{lem:new_packing} is strict.
Therefore, the equality cannot hold in this subcase.

\mycase{case:Z-empty}{$Z=\emptyset$.}

Since $Z=\emptyset$, by Theorem~2 in ~\cite{nodal_domain_LAA}, the eigenfunction $f$ corresponding to $\rho_2(G)$ has exactly two nodal domains, denoted  by $U_1$ and $U_2$.
Moreover, for each $i=1,2$, every boundary edge weight in $G_{U_i}$ is nonzero.
Thus the inequality~\eqref{eq:nodal_basic_comparison} is strict.

We distinguish the following three subcases.

\mysubcase{subcase:Z-empty-kappa-omega}{$\kappa = \omega$.}

We may assume that $G[\Omega_1]$ contains a clique $K_\omega$, and we have $m_1\geq \binom{\omega}{2}$. 
If $m_1+r\geq \binom{\omega}{2}+2$, then for the nodal domain $U_2$, by~\eqref{eq:nodal_basic_comparison}, ~\eqref{eq_1_1_1}, and Theorem \ref{thm:Nikiforov_fixed_size}, we have $\rho_2^2 < \rho_1^2(G[\Omega_2])\leq 2m_2\frac{\omega-1}{\omega} \leq  \frac{(m-2)^2}{\omega^2}.$

It remains to consider $m_1+r=\binom{\omega}{2}+1$.
Since $m_1\geq\binom{\omega}{2}$ and $r\geq1$, this forces $m_1=\binom{\omega}{2}$ and $r=1$.
Then $G[\Omega_1]\cong K_\omega$. 
Since the inequality \eqref{eq:nodal_basic_comparison} is strict, it gives $\rho_2<\rho_1(K_\omega)=\omega-1$.
By~\eqref{eq:nodal_basic_comparison} and Lemma~\ref{cor:spectral_radius_monotonicity}, we have $\rho_2 = \sigma_1(A(K_\omega)-I_{vv})\geq\sigma_1(A(K_{\omega-1}))=\omega-2$.
Thus $\rho_2 \in [\omega-2, \omega-1)$.
Next, we divide the size of $m_2$ into three cases.

Firstly,  $m_2\geq \binom{\omega}{2}+1$.
It follows that $m=m_1+m_2+r\geq \omega(\omega-1)+2$.
Hence $\rho_2<\omega-1 \leq \frac{m-2}{\omega}$.

Secondly, $m_2\leq \binom{\omega-1}{2}$.
If $G[\Omega_2]\not\cong K_{\omega-1}$, then, by Lemma~\ref{lem:near_complete_clique_estimate_combined_combined} and \eqref{eq:nodal_basic_comparison}, we have $\rho_2<\omega-2-\frac{1}{\omega}$, a contradiction with $\rho_2\geq\omega-2$.
If $G[\Omega_2]\cong K_{\omega-1}$, then, by \eqref{eq:nodal_basic_comparison}, we have $\rho_2<\rho_1(K_{\omega-1})=\omega-2$, a contradiction.

Finally, we assume that $m_2=\binom{\omega-1}{2}+R,$ where $1\leq R\leq \omega-1.$
For $\omega(G[\Omega_2])\leq \omega-2$, by \eqref{eq:nodal_basic_comparison} and Theorem~\ref{thm:Nikiforov_fixed_size}, we have 
$$\rho_2^2< 2\left(\binom{\omega-1}{2}+R\right)\frac{\omega-3}{\omega-2}< \left(\omega-2+\frac{R}{\omega}\right)^2= \left(\frac{m-2}{\omega}\right)^2.$$

For  $\omega(G[\Omega_2])=\omega-1$,
we claim that $\omega\geq3$.
If $\omega=2$, then $G[\Omega_2]$ is a single vertex. 
By~\eqref{eq:nodal_basic_comparison}, we have $\rho_2\leq0$, contrary to our assumption $\rho_2>0$. 
If $G[\Omega_2] \not\cong K_{\omega}-e$, then by \eqref{eq:nodal_basic_comparison} and Lemma~\ref{lem:near_complete_clique_estimate_combined_combined}, we have $\rho_2<\rho_1(G[\Omega_2]) \leq \omega-2+\frac{R}{\omega}=\frac{m-2}{\omega}$.
It remains to consider the exceptional case $G[\Omega_2]\cong K_{\omega}-e$, which implies that $|\Omega_2|=\omega$.
The graph $G$ now satisfies $G[\Omega_1]\cong K_\omega$, $G[\Omega_2]\cong K_\omega-e$, and $r=1$, where $G$ is of even order.
Let $u\in V(K_\omega)$.
Let $v_1v_2$ be the missing edge of $K_\omega-e$.
Define $G_1\coloneqq (K_\omega-e)\cup K_\omega+uv_1$.
For some $v'\in V((K_\omega-e))\setminus\{v_1,v_2\}$, define $G_2\coloneqq (K_\omega-e)\cup K_\omega+uv'$.
Let $q_\omega(x)=x^2-(\omega-3)x-2(\omega-2)$, $f_1(x)=x(x-\omega+1)q_\omega(x)-(x-\omega+2)^2$, and $f_2(x)=
(x+1)^2(x-\omega+1)q_\omega(x)
-(x-\omega+2)q_{\omega-1}(x).$
The characteristic polynomial $P(G_1;x)$ and $P(G_2;x)$ are given by
\begin{align*}
    P(G_1;x)= (x+1)^{2\omega-4}f_1(x),\quad  
P(G_2;x)=x(x+1)^{2\omega-6}f_2(x).
\end{align*} 
For each $i=1,2$, we claim that the polynomial $f_i(x)$ has exactly one root in $(\omega-2,\omega-1)$ and this root is precisely $\rho_2(G_i)$.
Note that $f_1(\omega-2)f_1(\omega-1)=-(\omega-2)^2<0$ and $f_2(\omega-2)f_2(\omega-1)=-(\omega-1)^2(\omega-2)(\omega+3)<0$, which implies that $f_i(x)$ has some roots in $(\omega-2,\omega-1)$.
Since $G_i$ is obtained from $(K_\omega-e)\cup K_\omega$ by adding one edge, by Lemma~\ref{thm:Weyl_inequality}, $\rho_3(G_i)\leq \rho_3((K_\omega-e)\cup K_\omega))+1\leq \omega-2$.
On the other hand, since $G_i$ is connected and properly contains $K_\omega$, by Lemma~\ref{lem:proper_subgraph_spectral_radius}, we have $\rho_1(G_i)>\omega-1$.
Therefore, $f_i(x)$ has exactly one root in $(\omega-2,\omega-1)$, and this root is precisely $\rho_2(G_i)$.
Let $\alpha=\rho_2(G_1)$.
Then $f_1(\alpha)=0$, and $\alpha(\alpha-\omega+1)q_\omega(\alpha)=(\alpha-\omega+2)^2$.
Using this equality, we obtain that
\begin{align*}
f_2(\alpha)
&=(\alpha+1)^2(\alpha-\omega+1)q_\omega(\alpha)
-(\alpha-\omega+2)q_{\omega-1}(\alpha)\\
&=\frac{(\alpha+1)^2}{\alpha}(\alpha-\omega+2)^2
-(\alpha-\omega+2)q_{\omega-1}(\alpha)\\
&=(\alpha-\omega+2)
\left(
\frac{(\alpha+1)^2(\alpha-\omega+2)}{\alpha}
-q_{\omega-1}(\alpha)
\right)\\
&=-\frac{(\alpha-\omega+2)(\alpha+\omega-2)}{\alpha}<0.
\end{align*}
Note that $f_2(\omega-2)>0$.
Thus $f_2$ has a root in $(\omega-2,\alpha)$, which implies that $\rho_2(G_2)<\alpha=\rho_2(G_1)$.
Then $\omega\rho_2(G_2)-e(G_2) < \omega\rho_2(G_1)-e(G_1)$.

For $\omega(G[\Omega_2])=\omega$,  $m_2=\binom{\omega}{2}$.
Then $G$ consists of two copies of $K_{\omega}$ joined by an edge; that is, $G\cong 2K_{\omega}+e$.
Now we show that $\omega\rho_2(G_1)-e(G_1) <
    \omega\rho_2(2K_{\omega}+e)-e(2K_{\omega}+e)$.
Recall that $\rho_2(2K_\omega+e)=\frac{\omega-3+\sqrt{(\omega-1)(\omega+3)}}{2}$.
Note that $f_1(\omega-1-\frac{2}{\omega}+\frac{1}{2\omega^2})<0$.
Therefore, we have
\begin{align*}
        \rho_2(G_1)<\omega-1-\frac{2}{\omega}+\frac{1}{2\omega^2}<\frac{\omega-3+\sqrt{(\omega-1)(\omega+3)}}{2}-\frac{1}{\omega}=\rho_2(2K_{\omega}+e)-\frac{1}{\omega}.
\end{align*}
It follows that
$$\omega\rho_2(G_1)-e(G_1)< \omega\left(\rho_2(2K_{\omega}+e)-\frac{1}{\omega}\right)-e(G_1) =\omega\rho_2(2K_{\omega}+e)-e(2K_{\omega}+e).$$
Moreover, recall that $\omega\rho_2(2K_\omega+e)-e(2K_\omega+e) >-2$.
Overall, we have  $\rho_2(G)\omega-e(G)<-2$ for odd order,
and the graph $2K_\omega+e$ attains the maximum value of $\rho_2(G)\omega-e(G)$ for even order. 

\mysubcase{subcase:Z-empty-kappa-omega-1}{$\kappa = \omega-1$.}

If $\omega=2$, then $\kappa=1$. 
So every $G[\Omega_i]$ is a single vertex. 
By~\eqref{eq:nodal_basic_comparison}, we have $\rho_2\leq0$, contrary to our assumption $\rho_2>0$. 
Therefore, we may assume that $\omega\geq3$.

Assume that $G[\Omega_1]$ contains $K_{\omega-1}$.
Then $m_1\geq \binom{\omega-1}{2}$.
Since no nodal domain contains $K_\omega$, every maximum clique $K_\omega$  with both $G[\Omega_1]$ and $G[\Omega_2]$. 
Therefore, such a clique contributes at least
$\omega-1$ edges between $G[\Omega_1]$ and $G[\Omega_2]$, which implies that $r\geq \omega-1$. 
Then, $m_1+r\geq \binom{\omega-1}{2}+\omega-1=\binom{\omega}{2}$.
If $m_1+r\geq \binom{\omega}{2}+2$, by \eqref{eq:nodal_basic_comparison}, \eqref{eq_1_1_1}, and Theorem~\ref{thm:Nikiforov_fixed_size}, we have  $\rho_2^2 < \frac{(m-2)^2}{\omega^2}$.

It remains to consider that $m_1+r=\binom{\omega}{2}+\delta$ with $\delta\in\{0,1\}$.
Suppose first that $m_2\leq \binom{\omega-1}{2}$. 
If $G[\Omega_2]\not\cong K_{\omega-1}$, then,  by~\eqref{eq:nodal_basic_comparison} and Lemma~\ref{lem:near_complete_clique_estimate_combined_combined}, we have
\begin{align*}
    \rho_2<
    \rho_1(G[\Omega_2])
    <\omega-2+
    \frac{m_2-\binom{\omega-1}{2}-1}{\omega}
    \leq
    \frac{m-2}{\omega}.
\end{align*}
If $G[\Omega_2]\cong K_{\omega-1}$ and $\delta=1$, then by~\eqref{eq:nodal_basic_comparison}, we have $\rho_2<\rho_1(G[\Omega_2])=\omega-2=\frac{m-2}{\omega}$.
If $G[\Omega_2]\cong K_{\omega-1}$ and $\delta=0$, then $m_1=\binom{\omega-1}{2}$ and $r=\omega-1$.
Hence $G[\Omega_1]\cong K_{\omega-1}$, and the equality $r=\omega-1$ forces $G\cong K_\omega O_1K_{\omega-1}$. 
By~\cref{lem:coalesced-cliques}, we have  $\rho_2(K_\omega O_1K_{\omega-1})< \frac{m-2}{\omega}$.

Now suppose that $m_2\geq \binom{\omega-1}{2}+1$. 
Note that $\binom{\omega-1}{2} \leq m_1\leq \binom{\omega-1}{2}+\delta$.
If $m_1=\binom{\omega-1}{2}$, then $G[\Omega_1]\cong K_{\omega-1}$.
By~\eqref{eq:nodal_basic_comparison}, we have $ \rho_2<\rho_1(G[\Omega_1])=\omega-2\leq \frac{m-2}{\omega}.$
If $m_1=\binom{\omega-1}{2}+1$ and $G[\Omega_1] \not\cong K_{\omega}-e$ , then by Lemma~\ref{lem:near_complete_clique_estimate_combined_combined}, we have $\rho_2<\rho_1(G[\Omega_1])<\omega-2+\frac{1}{\omega} \leq \frac{m-2}{\omega}.$
For $G[\Omega_1]\cong K_\omega-e$,
since $m_1=\binom{\omega-1}{2}+1$ and $e(K_\omega-e)=\binom{\omega}{2}-1$, we obtain that $\omega=3$. 
Hence $G[\Omega_1]\cong K_3-e\cong P_3$ and $m_1=2$.
Since $m_1+r=\binom{3}{2}+1=4$, we have $r=2$.
If $m_2\geq3$, then by~\eqref{eq:nodal_basic_comparison}, $\rho_2<\rho_1(G[\Omega_1])=\rho_1(P_3)=\sqrt2<\frac53 \leq \frac{m_2+2}{3} =\frac{m-2}{3}$.
So it remains to consider $m_2=2$. Since $G[\Omega_2]$ is connected and has two edges,
we have $G[\Omega_2]\cong P_3$.
Thus $G$ is obtained from two copies of $P_3$ by adding two edges between them. 
Since $\omega(G)=3$, these two added edges must form a triangle together with one edge of one of the two paths. 
Up to isomorphism, there are only two
types.
Let $P_3=x_1x_2x_3$ and $P_3'=y_1y_2y_3$. 
The first type is $H_1=P_3\cup P_3'+\{x_1y_1,x_1y_2\}$, and the second type is $H_2=P_3\cup P_3'+\{x_2y_1,x_2y_2\}$.
Their characteristic polynomials are given respectively by
\begin{align*}
    P(H_1;x)=
    x^6-6x^4-2x^3+7x^2+2x-1, \quad P(H_2;x)=
    x^2\left(x^4-6x^2-2x+5\right).
\end{align*}
It follows that $\rho_2(H_i)<\frac43$.
Since  $m=m_1+m_2+r=2+2+2=6$ and $\omega=3$, we have $\rho_2(G)<\frac{m-2}{\omega}=\frac43$.

Therefore, in this subcase, we have $\rho_2<\frac{m-2}{\omega}.$

\mysubcase{subcase:Z-empty-kappa-omega-2}{$\kappa \leq \omega-2$.}

Since $\kappa \geq 1$, $\omega\geq 3$.
If $\omega=3$, then $\kappa = 1$, and hence each connected graph $G[\Omega_i]$ is a single vertex. By~\eqref{eq:nodal_basic_comparison}, this gives
$\rho_2\leq0$, contrary to our assumption $\rho_2>0$.
We may assume that $\omega\geq4$.

Choose a maximum clique $K_\omega$ of $G$.
Write $a=|K_\omega\cap\Omega_1|$ and $b=|K_\omega\cap\Omega_2|$.
WLOG, we may assume that $a\geq b$.
Since $Z=\emptyset$, we have $a+b=\omega$. 
The $ab$ edges of $K_\omega$ joining $K_\omega\cap\Omega_1$ and
$K_\omega\cap\Omega_2$ are not internal to any nodal domain. 
Then $r\geq r_K= ab$.
Since $\kappa\leq\omega-2$, we also have
$a\leq\omega-2$.
So $b\geq2$.
By~\eqref{eq:nodal_basic_comparison}, Theorem~\ref{thm:Nikiforov_fixed_size}, and Lemma \ref{lem:new_packing}, we have
\begin{align*}
    \rho_2^2
    <  \min\{\rho_1^2(G[\Omega_1]), \rho_1^2(G[\Omega_2])\}
    \leq
    \frac{(m-2)^2}{\omega^2}.
\end{align*}


\noindent\textbf{Characterization of extremal graphs}

For $n$ is even, we know that $2K_\omega+e$ attains the maximum value of $\rho_2(G)\omega-e(G)$  from the preceding discussion; that is, this maximum is attained
if and only if $G\cong 2K_{\frac n2}+e$ with even $n$.

For $n$ is odd,  we have $\rho_2(G)\leq \frac{m-2}{\omega}$. 
The equality may hold only  for the subcases $\kappa=\omega$ and $\kappa=\omega-1$ in Case~\ref{case:Z-nonempty}.

In Subcase~\ref{subcase:Z-nonempty-kappa-omega} $\kappa=\omega$, suppose that equality holds in~\eqref{eq_Z_a_1}. 
Then $m=\omega^2-\omega+2$, $m_2=m-\binom{\omega}{2}-2=\binom{\omega}{2}$, and $m_1+r=\binom{\omega}{2}+2$.
Since $\kappa=\omega$, the graph $G[\Omega_1]$ contains a clique $K_\omega$, which implies that $m_1\geq \binom{\omega}{2}$. 
Note that $r \geq 2$.
Thus $m_1=\binom{\omega}{2}$ and $r=2$. 
Then $G[\Omega_1]\cong K_\omega$.
Moreover, since $Z\neq\emptyset$ and $G$ is connected, the equality $r=2$ implies that $Z$ consists of exactly one vertex, which is adjacent to both nodal domains.
By Theorem~\ref{thm:Hong} and the equality in~\eqref{eq_1_1_1} and~\eqref{eq_Z_a_1}, we have
\begin{align*}
\mu_1(G_{U_i})^2=\rho_1(G[\Omega_2])^2=\rho_1(G[\Omega_1])^2=(\omega-1)^2
    \leq
    2\binom{\omega}{2}-|\Omega_2|+1=
    \omega(\omega-1)-|\Omega_2|+1.
\end{align*}
Then $|\Omega_2|\leq \omega$.
On the other hand, $\binom{\omega}{2}=m_2\leq \binom{|\Omega_2|}{2}$, which implies $|\Omega_2|\geq \omega$. 
Therefore, we have $|\Omega_2|=\omega$ and $G[\Omega_2]\cong K_\omega$.
Then $G$ is composed of two copies of $K_\omega$ linked by a path of length two.
Conversely, suppose that $G$ is obtained from two disjoint copies $K_\omega^{(1)}$ and $K_\omega^{(2)}$ by adding a new vertex z and two
edges $zx_1$,$zx_2$, where $x_i\in V\bigl(K_\omega^{(i)}\bigr)$ for $i=1,2$.
Define a vector $\mathbf{x}\in\mathbb{R}^{V(G)}$ by
\begin{align*}
        x_u=
    \begin{cases}
        1, & u\in V\bigl(K_\omega^{(1)}\bigr),\\
       -1, & u\in V\bigl(K_\omega^{(2)}\bigr),\\
        0, & u=z.
    \end{cases}
\end{align*}
For any $v\in V(K_\omega^{(i)})$, we have $(A(G)\mathbf{x})_v=(\omega-1)x_v$.
For the vertex $z$, we get $(A(G)\mathbf{x})_z=1+(-1)=0=(\omega-1)x_z$.
Thus, $\omega-1$ is an adjacency eigenvalue of $G$.
Since $G$ is connected and $K_\omega$ is a proper subgraph of $G$, by Lemma~\ref{lem:proper_subgraph_spectral_radius}, we have $\rho_1(G)>\rho_1(K_\omega)=\omega-1$.
It follows that $\rho_2(G)\geq \omega-1$.
Together with the bound $\rho_2(G)\leq \frac{m-2}{\omega}=\omega-1$ already proved, we obtain $\rho_2(G)=\omega-1=\frac{m-2}{\omega}$.

In Subcase~\ref{subcase:Z-nonempty-kappa-omega-1}  $\kappa=\omega-1$, suppose that equality holds in~\eqref{eq_Z_a_2}. 
Then $m=\omega^2-2\omega+2$, $m_2=m-\binom{\omega-1}{2}-\omega=\binom{\omega-1}{2}$, $m_1=\binom{\omega-1}{2}$, and $r=\omega$.
The selected clique $K_\omega$ consists of $z$ and $\omega-1$ vertices of $G[\Omega_1]$.
Hence $z$ is adjacent to all vertices of
$G[\Omega_1]$, contributing $\omega-1$ edges. 
Since $r=\omega$ and $G$ is connected, there is exactly one more edge, which joins $z$
to $G[\Omega_2]$. 
In particular, $Z=\{z\}$.
Since $\kappa=\omega-1$, we have $G[\Omega_1]\cong K_{\omega-1}$.
By Theorem~\ref{thm:Hong} and the equality in
\eqref{eq_Z_a_2} and  \eqref{eq_1_1_1}, we have
\begin{align*}
    \rho_1(G[\Omega_2])^2=\rho_1(G[\Omega_1])^2=(\omega-2)^2
    \leq
    2\binom{\omega-1}{2}-|\Omega_2|+1=
    (\omega-1)(\omega-2)-|\Omega_2|+1,
\end{align*}
 which implies $|\Omega_2|\leq \omega-1$.
On the other hand, $\binom{\omega-1}{2}=m_2\leq \binom{|\Omega_2|}{2}$, which implies $|\Omega_2|\geq \omega-1$. 
Hence $|\Omega_2|=\omega-1$ and $G[\Omega_2]\cong K_{\omega-1}$.
Therefore, $G$ is obtained from $K_\omega$ and $K_{\omega-1}$ by adding exactly one edge between them.
The converse is proved in the same way as in Subcase~\ref{subcase:Z-nonempty-kappa-omega} .

In conclusion, for odd $n$, the equality holds if and only if $G$ is composed of $K_{\frac{n+1}{2}}$ and $K_{\frac{n-1}{2}}$ linked by an edge, or $K_{\frac{n-1}{2}}$ and $K_{\frac{n-1}{2}}$ linked by a path.

The proof is complete.
\end{proof}

\section{Concluding remarks}
Let $\mathcal G=\{\mathcal G_n\}_{n\ge 1}$ be a graph class satisfying the pendant-extension property, which holds for many classes of graphs.  
When $n$ is odd,
by Theorem~\ref{lem:adjacency_nodal_domain} and
Lemma~\ref{lem:alpha_beta_weak_pendant_monotonicity}, it shows that the $\rho_2$-extremal graphs in $\mathcal G_n$ are determined by the corresponding $\rho_1$-extremal graphs if the graphs obtained by gluing  two $\rho_1$-extremal graphs are still in $\mathcal G_n$.
When $n$ is even, Theorem~\ref{thm:connected_even_bridge} implies that we should find the extremal graph of $\beta(n/2)$.
A general idea is to use Tait-Tobin's method~\cite{tait2017three} or the similar proof in  Byrne, Desai and Tait's result~\cite{byrne2025general} for $n$ sufficiently large.

Consequently, many known spectral extremal results can be transferred directly to the $\rho_2$-problem.
For the second smallest Dirichlet eigenvalue and algebraic connectivity, the similar general theorems can also be obtained via geometric representation and nodal domain.

 \section*{Declaration of competing interest}
 The authors declare that they have no known competing financial interests or personal relationships that could have appeared to influence the work reported in this paper.

\section*{Data availability}

No data was used for the research described in the article.

\section*{Acknowledgements}
We are grateful to Professor Chengjie Yu for his inspiring insights on the nodal domain and geometric representation.
All authors Huiqiu LIN, Lianping LIU, Xilong YIN, and Zhe YOU are indexed in alphabetical order. All authors are co-first authors.
 Huiqiu LIN was supported by the National Natural Science Foundation of China (No. 12271162, No. 12326372), and Natural Science Foundation of Shanghai (No. 22ZR1416300 and No. 23JC1401500) and The Program for Professor of Special Appointment (Eastern Scholar) at Shanghai Institutions of Higher Learning (No. TP2022031).

\end{document}